\newtheorem{dfn}{Definition}[section]
\newtheorem{thm}[dfn]{Theorem}
\newtheorem{lem}[dfn]{Lemma}
\newtheorem{remark}[dfn]{Remark}
\newtheorem{cor}[dfn]{Corollary}
\newcommand{\K}{\mathbb{K}}
\newcommand{\C}{\mathbb{C}}
\newcommand{\M}{\mathcal{M}}
\newcommand{\Q}{\mathcal{Q}}
\newcommand{\id}{\operatorname{id}}
\newcommand{\ad}{\operatorname{Ad}}
\title{Ergodic automorphisms on Kirchberg algebras}
\author{Kengo Matsumoto, Taro Sogabe}
\date{}
\begin{document}

\maketitle
\begin{abstract}
    Combining the theory of extensions of C*-algebras and the Pimsner construction,
    we show that every countable infinite discrete group admits an ergodic action on arbitrary unital Kirchberg algebra.
In the proof, we give a Pimsner construction realizing many unital subalgebras of a given unital Kirchberg algebra as the fixed point algebras of single automorphisms.
Furthermore, for amenable infinite discrete groups, we show that every point-wise outer action on arbitrary unital Kirchberg algebra has an ergodic cocycle perturbation with the help of Gabe--Szab\'{o}'s theorem and  Baum--Connes' conjecture.
\end{abstract}

\section{Introduction}
For a group action on a C*-algebra,
it would not be easy to capture the fixed point algebra via KK-theory.
For example, in the case of single automorphism $\alpha$ of a C*-algebra $A$ (i.e., $\mathbb{Z}$-action $\alpha : \mathbb{Z}\curvearrowright A$),
one can compute the K-theory of the crossed product $K_*(A\rtimes_\alpha \mathbb{Z})$ via the Pimsner--Voiculescu exact sequence but not of the fixed point algebra in general.
One can not expect any strong relations between $K_*(A^\alpha)$ and $K_*(A)$,
and one of the extremal case might be the situation where $A$ is unital and the fixed point algebra $A^\alpha:=\{a\in A\;|\; \alpha_g(a)=a\;\text{for all}\; g\in G\}$ is $\mathbb{C}$ (i.e., the $G$-action $\alpha$ is ergodic).

The ergodic actions on operator algebras have been studied for von Neumann algebras, and there are many previous results.
Here, we would like to refer  \cite{MV, EOA, EOA3} for the researches on the existence of ergodic actions on von Neumann algebras.
For C*-algebras,
there are several interesting constructions of ergodic actions on the Cuntz algebras (see \cite{DE, CP}).
In the case of the Cuntz algebras,
these ergodic actions are obtained by a kind of quasi-free automorphisms coming from the Pimsner construction.

The Pimsner construction allows us to find a unital Kirchberg algebra $A$ and a quasi-free automorphisms $\alpha$ with the prescribed K-theoretic data (see \cite{P, Kumjian, Mey} and also \cite{Kita, Katsu}).
In this paper,
we combine the Pimsner construction with the theory of extensions of C*-algebras to control the fixed point algebras.
The key idea (see Sec. \ref{keycon}) comes from our previous research on the realization of reciprocal Cuntz--Krieger algebras \cite[Proposition 3.1.]{MatsSo}.

First, we show the following result providing a new picture for the unital subalgebras of Kirchberg algebras. 
\begin{thm}\label{Mats}
Let $G$ be a countable infinite discrete group.
Let $A$ be a unital Kirchberg algebra, and let $B$ be a unital,  separable, nuclear C*-algebra with a unital embedding $\iota_0 : B\hookrightarrow A$.
Then, there exist a unital embedding $\iota_1 : B\hookrightarrow A$ with $KK(\iota_0)=KK(\iota_1)$ and a $G$-action $\gamma$ on $A$ satisfying 
\begin{enumerate}
\item $A^\gamma=\iota_1(B)$ and there is a $G$-equivariant conditional expectation $A\to A^\gamma$,
\item $\gamma_g$ is outer for every $g\in G\backslash \{e\}$ (i.e., $\gamma$ is point wise outer) and $(A, \gamma)\sim_{KK^G}(A, {\rm id})$.
\end{enumerate}
In particular, every unital Kirchberg algebra $A$ has an ergodic action with an invariant state.
\end{thm}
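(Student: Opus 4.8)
The plan is to realize the pair $B \subseteq A$ as a Cuntz--Pimsner algebra $\mathcal{O}_E$ over $B$ and then to obtain the $G$-action as a quasi-free action induced by a unitary representation of $G$ on the correspondence $E$. First I would use the Pimsner construction (\cite{P, Kumjian, Mey}) to build a right Hilbert $B$-correspondence $E$, with a nondegenerate left action $\phi : B \to \mathcal{L}(E)$, whose Cuntz--Pimsner algebra $\mathcal{O}_E$ is a unital Kirchberg algebra. The parameter one controls is the $KK$-class $[E] \in KK(B,B)$: the Pimsner six-term sequence computes $K_*(\mathcal{O}_E)$ through the operator $1 - [E]$ on $K_*(B)$, so $[E]$ can be chosen to make $K_*(\mathcal{O}_E) \cong K_*(A)$. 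Because $B$ is only assumed nuclear (and need not satisfy the UCT), matching $K$-groups is not enough to pin down the $KK$-class of the coefficient inclusion; here the point is to build $E$ directly from the given $\iota_0$, using the extension-theoretic device of \cite[Proposition 3.1]{MatsSo}, so that the canonical inclusion $B \hookrightarrow \mathcal{O}_E$ already carries the class $KK(\iota_0)$. Kirchberg--Phillips classification then gives an isomorphism $\mathcal{O}_E \cong A$ under which this canonical copy of $B$ becomes the desired embedding $\iota_1$ with $KK(\iota_1) = KK(\iota_0)$.

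Next I would equip $E$ with a unitary representation $g \mapsto u_g \in \mathcal{L}(E)$ that is right $B$-linear and commutes with $\phi$, so that each $u_g$ is an automorphism of the correspondence fixing $B$; the induced quasi-free automorphisms $\gamma_g(T_\xi) = T_{u_g\xi}$, with $\gamma_g|_B = \id$, assemble into a $G$-action $\gamma$ on $\mathcal{O}_E \cong A$. The conditional expectation onto $B$ furnished by the Fock/vacuum structure of the Pimsner construction is $G$-equivariant because $\{u_g\}$ fixes the vacuum and preserves the gauge grading; since $G$ need not be amenable, it is important that this expectation is structural and not obtained by averaging over $G$. The decisive requirement is to choose $\{u_g\}$ so that $A^\gamma = \iota_1(B)$. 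Here I would build the representation out of the left regular representation on $\ell^2(G)$. Since $\gamma$ commutes with the gauge circle action, $A^\gamma$ is gauge-invariant and decomposes along gauge degrees; the regular representation leaves no nonzero $G$-invariant element in any positive gauge degree, and cuts the gauge-invariant core down to $B$, forcing $A^\gamma = \iota_1(B)$. Establishing this identification while retaining the $KK$-theoretic control of the first paragraph is the main obstacle, and is precisely where the key idea of \cite{MatsSo} enters.

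It then remains to verify (2). Point-wise outerness follows from pure infiniteness together with the freeness of the regular representation: for $g \neq e$ the automorphism $\gamma_g$ implements a fixed-point-free unitary on the generating correspondence, and such a quasi-free automorphism cannot be inner on a simple purely infinite C*-algebra. For the equivariant triviality $(A,\gamma) \sim_{KK^G} (A, \id)$, I would note that each $u_g$ lies in the connected component of the identity of $\mathcal{L}(E)$, so that $\gamma$ is a gauge-type deformation of the trivial action; running the Cuntz--Pimsner $KK$-equivalence $G$-equivariantly then exhibits a homotopy of $G$-actions connecting $\gamma$ to $\id$ and yields the equality of classes in $KK^G$.

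Finally, the concluding assertion is the case $B = \mathbb{C}$ with $\iota_0 : \mathbb{C} \hookrightarrow A$ the unit inclusion: the theorem produces a $G$-action $\gamma$ with $A^\gamma = \mathbb{C}\,1_A$, that is, an ergodic action, and the accompanying $G$-equivariant conditional expectation $A \to A^\gamma = \mathbb{C}\,1_A$ is exactly a $\gamma$-invariant state on $A$.
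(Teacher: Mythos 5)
Your proposal has a genuine gap, and it sits exactly at the point where the paper's key idea enters: you take the Pimsner correspondence over $B$ itself, and that can never reproduce the KK-theory of $A$. In fact the two halves of your plan are mutually incompatible. For the quasi-free action you require unitaries $u_g\in\mathcal{L}(E)$, built from the regular representation of the infinite group $G$, that are right $B$-linear and commute with the left action $\phi$; for infinite $G$ and nondegenerate $\phi$ this forces $\phi(B)\cap\K(E)=0$ (a module-compact operator commuting with all translates $\lambda_g\otimes 1$ on $\ell^2(G)\otimes E_0$ must vanish — the same elementary fact the paper records as $E\otimes 1_{l^2(G)}\cap\K(H\otimes l^2(G))=\{0\}$). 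Consequently $[E]$ is not a $KK(B,B)$-class with compact left action, the six-term sequence through $1-[E]$ that you invoke does not apply, Katsura's ideal vanishes so $\mathcal{O}_E=\mathcal{T}_E$, and Pimsner's theorem (Theorem \ref{wk} of the paper) makes the coefficient inclusion $B\hookrightarrow\mathcal{T}_E$ a KK-equivalence. So your construction can only produce Kirchberg algebras KK-equivalent to $B$: demanding $\mathcal{O}_E\cong A$ forces $A\sim_{KK}B$, which already fails for the unital inclusion $\C\subset\mathcal{O}_2$ (here $K_0(\C)=\mathbb{Z}$ while $K_*(\mathcal{O}_2)=0$). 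No correspondence over $B$ can simultaneously have fixed-point algebra $B$ under such a quasi-free action and total algebra $A$.

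The paper circumvents precisely this obstruction by enlarging the coefficient algebra. Using the mapping cone of $\iota_0$, the embedding of \cite[Lem. 2.2.]{fkk}, and the Ext/Busby machinery (Lemma \ref{ade}, Lemma \ref{extri}), it manufactures a separable nuclear algebra $E$ with $B\otimes\K\triangleleft E$, an approximate unit of increasing projections, and a KK-equivalence $E\sim_{KK}A$ compatible with $KK(\iota_0)$, with the corner embedding $KK(i_{(1_B\otimes e_0)})$, and with the unit classes. The Pimsner bimodule is then $X=H\otimes l^2(G)\otimes E$ over $E$ (with trivial unitaries $U_g=1$): now the coefficient-inclusion KK-equivalence works in your favor ($\mathcal{T}_X\sim_{KK}E\sim_{KK}A$), the corner $(1_B\otimes e_0)\mathcal{T}_X(1_B\otimes e_0)$ is a unital Kirchberg algebra isomorphic to $A$ by Kirchberg--Phillips, and the fixed-point algebra of the quasi-free action on this corner is $(1_B\otimes e_0)E(1_B\otimes e_0)=B\otimes e_0$, which is exactly how $\iota_1$ and item (1) arise. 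Note also that your argument for $(A,\gamma)\sim_{KK^G}(A,\id)$ — a homotopy of the unitaries $u_g$ to the identity — is unsubstantiated, since a path of unitaries need not be a path of representations (multiplicativity in $g$ is lost along the path); the paper gets this conclusion for free because the $G$-action on the coefficients $E$ is trivial, so the equivariant Pimsner theorem gives $(A,\gamma)\cong(A_E,\gamma^E)\sim_{KK^G}(E,\id)\sim_{KK^G}(A,\id)$. Your remaining points (the vacuum expectation as a structural, non-averaged equivariant conditional expectation, and the case $B=\C$ yielding ergodicity with invariant state) do match the paper, but they rest on the fixed-point identification that your choice of coefficients cannot deliver.
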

\begin{remark}
Our construction also shows that the choice of $\iota_1$ only depends on $\iota_0$ and independent of $G$ (see Rem. \ref{ish}).
One can not have $\iota_0=\iota_1$ in general because there are many unital nuclear C*-subalgebras without conditional expectations (see \cite[Theorem 1.1., Corollary 3.4.]{Sat}).
\end{remark}
\begin{remark}
For $G=\mathbb{Z}$,
the theorem shows that many unital subalgebras are realized as the fixed point algebras of  single automorphisms.
\end{remark}
\begin{remark}
    There is a work of Y. Suzuki \cite{Sz} controlling some non-nuclear fixed point algebra of $\mathcal{O}_2$ given by the reduced group C*-algebra.
    His approach is completely different from ours and our approach can not realize non-nuclear fixed point algebra because our construction always admits a conditional expactation onto the fixed point algebra.
    There are several results by Y. Suzuki on the intermediate subalgebras, and it might be meaningful to try to find some applications of our construction to these researches.
\end{remark}

The above theorem shows the existence of ergodic action for arbitrary countable infinite discrete group, but the construction is rather "non-equivariant" and resulting action satisfies $(A, \gamma)\sim_{KK^G}(A, \id)$ (i.e., trivial in a sense of $KK^G$-theory).

Next we investigate the ergodic action with a prescribed KK-theoretic data.
By \cite[Theorem 1]{EOA},
the finite groups can not act ergodically on a unital Kirchberg algebra because of the absence of trace. 
For  infinite discrete groups,
we show the following theorem.
We denote by $F : KK^G(-, -)\to KK(-, -)$ the forgetful functor (see Sec. \ref{prel1}).
\begin{thm}\label{fee}
Let $G$ be a countable infinite discrete amenable group, and let $(A, \alpha)$ be a unital Kirchberg G-algebra.
Then, there exists an ergodic, point--wise outer action $\gamma$ 
with a $\gamma$-invariant state and a $KK^G$-equivalence $k\in KK^G((A, \alpha), (A, \gamma))^{-1}$ satisfying $[1_A]_0\hat{\otimes} F(k)=[1_A]_0$.
\end{thm}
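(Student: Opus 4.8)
The plan is to deduce the theorem from two inputs: Theorem \ref{Mats}, which supplies ergodic point-wise outer ``models'', and the Gabe--Szab\'o classification of point-wise outer actions of countable amenable groups on Kirchberg algebras up to (unital) cocycle conjugacy by $KK^G$-data. First I would dispose of two easy points. Since $G$ is amenable and $A$ is unital, the weak$^*$-compact convex state space carries the $G$-action affinely, so the invariant-mean fixed-point argument produces a $\gamma$-invariant state for \emph{any} action $\gamma$; thus the invariant-state clause is automatic and I only need $\gamma$ ergodic and point-wise outer together with the $KK^G$-equivalence $k$. Next I would reduce to the case that $\alpha$ itself is point-wise outer: applying Theorem \ref{Mats} to $\mathcal{O}_\infty$ with $B=\C$ yields an ergodic point-wise outer action $\delta$ on $\mathcal{O}_\infty$ with $(\mathcal{O}_\infty,\delta)\sim_{KK^G}(\mathcal{O}_\infty,\id)\sim_{KK^G}(\C,\id)$, the last equivalence being the image of the $KK$-equivalence $\mathcal{O}_\infty\sim_{KK}\C$ under the trivial-action functor $KK\to KK^G$. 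Using $A\cong A\otimes\mathcal{O}_\infty$ I replace $(A,\alpha)$ by $(A\otimes\mathcal{O}_\infty,\alpha\otimes\delta)$; tensoring with a point-wise outer action is point-wise outer, and the external product of $\id_{(A,\alpha)}$ with the equivalence $(\mathcal{O}_\infty,\delta)\to(\C,\id)$ gives a unital $KK^G$-equivalence $k_0\colon(A,\alpha)\to(A,\alpha')$ with $[1_A]_0\hat{\otimes}F(k_0)=[1_A]_0$, where $\alpha'$ is point-wise outer.

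The heart of the matter is then the statement flagged in the abstract: \emph{every point-wise outer action $\beta$ of $G$ on a unital Kirchberg algebra $A$ admits an ergodic cocycle perturbation}. Granting it, the theorem follows. A cocycle perturbation $\gamma_g=\ad(u_g)\circ\beta_g$ leaves the underlying algebra, its unit, and the $KK^G$-class unchanged — exterior equivalent actions are canonically $KK^G$-equivalent through a class whose forgetful image is $[\id_A]$ — so applying the lemma to $\alpha'$ produces an ergodic point-wise outer $\gamma$ together with a unital $KK^G$-equivalence $(A,\alpha')\to(A,\gamma)$, and composing with $k_0$ yields the desired $k$ with $[1_A]_0\hat{\otimes}F(k)=[1_A]_0$.

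To prove the lemma I would first buy room by absorbing the ergodic model. Forming $\beta\otimes\delta$ on $A\otimes\mathcal{O}_\infty\cong A$ keeps point-wise outerness and, by multiplicativity of the external product together with $(\mathcal{O}_\infty,\delta)\sim_{KK^G}(\C,\id)$, stays in the $KK^G$-class of $\beta$ with the unit class preserved; here the amenability of $G$ enters through the Baum--Connes conjecture and the associated localization techniques, which are what license the application of Gabe--Szab\'o's classification and the verification of the required $KK^G$-equivalences by restricting to the (finite) subgroups. Gabe--Szab\'o's uniqueness theorem then gives that $\beta$ and $\beta\otimes\delta$ are unitally cocycle conjugate, so I may assume $\beta$ tensorially absorbs the ergodic, mixing model $\delta$. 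With this absorption in hand I would construct the ergodicity-producing cocycle by successive approximation: fixing a countable dense subset of the self-adjoint part of $A$ together with a reference candidate conditional expectation, I use the mixing of the absorbed $\delta$-coordinates to build, one element at a time, a $\beta$-cocycle $u$ for which no nonscalar element survives as a fixed point of $\ad(u_g)\circ\beta_g$, arranging the estimates so that the partial cocycles converge and the identity $u_{gh}=u_g\,\beta_g(u_h)$ passes to the limit.

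The main obstacle is exactly this last construction: reconciling the rigid cocycle identity with the genericity needed to annihilate the \emph{entire} fixed-point algebra, and then showing the limit action is genuinely ergodic rather than merely small along the chosen dense set. I expect the convergence to demand an Evans--Kishimoto-type intertwining controlled by the Rokhlin-type behaviour that point-wise outer amenable actions on Kirchberg algebras enjoy once $\delta$ has been absorbed. A secondary but essential technical point, needed to invoke Gabe--Szab\'o at all, is the careful deployment of Baum--Connes to reduce the equivariant $KK^G$-bookkeeping to non-equivariant $KK$-data; once that is set up, the equality $[1_A]_0\hat{\otimes}F(k)=[1_A]_0$ is immediate from the unitality of every map in sight.
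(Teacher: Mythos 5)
There is a genuine gap, and it sits exactly at the point you flag as the ``heart of the matter.'' The lemma on which your whole reduction rests --- \emph{every point-wise outer action of $G$ on a unital Kirchberg algebra admits an ergodic cocycle perturbation} --- is precisely the statement that the paper obtains as a \emph{corollary of} Theorem \ref{fee} (via Gabe--Szab\'o's uniqueness theorem), not an input to it. Your plan inverts the logical order, which would be legitimate only if you actually proved the lemma independently; but your proposed proof of it is a sketch whose essential step (building a $\beta$-cocycle by successive approximation so that \emph{no} nonscalar fixed point survives, with an Evans--Kishimoto-type intertwining making the partial cocycles converge) is not carried out, and you acknowledge as much. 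Note also that Gabe--Szab\'o's uniqueness theorem cannot substitute for this step: to perturb $(A,\alpha')$ into an ergodic action by that route you would need an ergodic, point-wise outer action in the \emph{same} $KK^G$-class as $(A,\alpha')$ with matching unit data, and Theorem \ref{Mats} only supplies ergodic actions satisfying $(A,\gamma)\sim_{KK^G}(A,\id)$, i.e.\ in the trivial class. Producing an ergodic model in an \emph{arbitrary prescribed} $KK^G$-class is the entire content of Theorem \ref{fee}; the paper does this constructively, by an equivariant version of the extension-plus-Pimsner machinery (an equivariant mapping-cone trick, a ``Busby invariant'' obtained from Gabe--Szab\'o's existence theorem, Baum--Connes to get the equivariant exact triangles, and then a quasi-free action on a Toeplitz--Pimsner algebra whose corner is identified with $A$, with ergodicity and outerness verified by direct Fock-space estimates). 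Absorption of the model $\delta$ does not help either: $A^{\beta\otimes\delta}\supset A^\beta\otimes\C$, so tensoring never destroys existing fixed points, and all the work remains in the unproven cocycle construction.

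The peripheral reductions in your proposal are sound and worth keeping: a $\gamma$-invariant state is automatic for any action of an amenable group on a unital C*-algebra by averaging over an invariant mean (the paper instead exhibits one explicitly via the Fock vacuum); cocycle perturbations do preserve the $KK^G$-class through $KK^G(\id_A,u_\cdot)$, whose forgetful image is $KK(\id_A)$; and the passage from arbitrary $\alpha$ to a point-wise outer $\alpha'$ by tensoring with the ergodic model on $\mathcal{O}_\infty$ is correct in outline, although the claim that $\alpha_g\otimes\delta_g$ is outer when $\delta_g$ is outer needs a relative-commutant argument ($(A\otimes 1)'\cap(A\otimes\mathcal{O}_\infty)=\C 1\otimes\mathcal{O}_\infty$ for $A$ simple unital), which you assert without proof. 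But these reductions only shuffle the difficulty into the one lemma you do not prove, so the proposal does not constitute a proof of Theorem \ref{fee}.
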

As a consequence,
the Gabe--Szab\'{o} theorem \cite[Theorem 6.2. (iii), Proposition 3.14.]{GS} yields the following.
\begin{cor}
Let $G$ be a countable infinite discrete amenable group, and let $(A, \alpha)$ be an arbitrary unital Kirchberg algebra with a point--wise outer $G$-action.
Then, there is an $\alpha$-cocycle $u_\cdot$ making $(A, \alpha^{u})$ ergodic, where $\alpha^u$ is defined by $(\alpha^u)_g:=\ad u_g\circ\alpha_g,\;\; g\in G$.
\end{cor}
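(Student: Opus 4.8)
The plan is to feed the given data into Theorem \ref{fee} and then convert the resulting $KK^G$-equivalence into a cocycle conjugacy via the Gabe--Szab\'{o} classification, after which a short cocycle-reversal argument transports ergodicity back to a perturbation of $\alpha$. First I would apply Theorem \ref{fee} to the unital Kirchberg $G$-algebra $(A,\alpha)$: since $G$ is countable, infinite, discrete, and amenable and $\alpha$ is point-wise outer, the theorem yields an ergodic, point-wise outer action $\gamma$ on the same algebra $A$, together with a $KK^G$-equivalence $k\in KK^G((A,\alpha),(A,\gamma))^{-1}$ normalized by $[1_A]_0\hat{\otimes}F(k)=[1_A]_0$.

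Next I would invoke the Gabe--Szab\'{o} classification. Both $\alpha$ and $\gamma$ are point-wise outer actions of the amenable group $G$ on unital Kirchberg algebras, so by \cite[Proposition 3.14.]{GS} they lie within the scope of the classification, and \cite[Theorem 6.2. (iii)]{GS} applies: a $KK^G$-equivalence preserving the class of the unit---which is exactly the content of $[1_A]_0\hat{\otimes}F(k)=[1_A]_0$---forces $(A,\alpha)$ and $(A,\gamma)$ to be cocycle conjugate. Unwinding the definition, this produces an automorphism $\phi\in\operatorname{Aut}(A)$ and a $\gamma$-cocycle $(w_g)_{g\in G}$, i.e. $w_{gh}=w_g\gamma_g(w_h)$, with $\phi\circ\alpha_g\circ\phi^{-1}=\ad(w_g)\circ\gamma_g$ for all $g\in G$.

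Finally I would transport the cocycle back to $\alpha$. The key observation is that cocycle conjugacy is reversible while ergodicity is only a conjugacy invariant, so the aim is to land on a genuine conjugacy with the ergodic $\gamma$ rather than merely a cocycle perturbation of it. Setting $u_g:=\phi^{-1}(w_g^*)$, one checks that $(w_g^*)$ is a cocycle for the action $\ad(w_\cdot)\circ\gamma_\cdot=\phi\,\alpha\,\phi^{-1}$ and that $\phi^{-1}$ intertwines this action with $\alpha$; hence $(u_g)$ is an $\alpha$-cocycle. A direct computation then gives $\phi\circ(\alpha^u)_g\circ\phi^{-1}=\ad(w_g^*)\circ\ad(w_g)\circ\gamma_g=\gamma_g$, so $\alpha^u$ is conjugate to $\gamma$ via $\phi$. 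Consequently $A^{\alpha^u}=\phi^{-1}(A^\gamma)=\phi^{-1}(\C 1)=\C 1$, and $(A,\alpha^u)$ is ergodic, as required.

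I expect the main obstacle to be the second step: confirming that the precise hypotheses of the Gabe--Szab\'{o} classification are met---in particular that point-wise outerness together with amenability of $G$ suffices to place both actions within the classifiable range, and that the unit-class normalization $[1_A]_0\hat{\otimes}F(k)=[1_A]_0$ is exactly the invariant their theorem demands for a \emph{unital} cocycle conjugacy. Everything afterward is formal: the cocycle-reversal identity $(\gamma^w)^{w^*}=\gamma$ and its transport under $\phi$ are routine, and the only conceptual care required is to use conjugacy (not cocycle conjugacy) at the last step, since ergodicity need not survive a cocycle perturbation.
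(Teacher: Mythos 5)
Your proposal is correct and takes essentially the same route as the paper, which states the corollary as an immediate consequence of Theorem \ref{fee} together with the Gabe--Szab\'{o} theorem: that theorem (as quoted in the preliminaries) turns the unit-preserving $KK^G$-equivalence $k$ into an isomorphism $\theta$ and a $\gamma$-cocycle $w_\cdot$ with $\theta\circ\alpha_g=\ad w_g\circ\gamma_g\circ\theta$, and your cocycle-reversal $u_g:=\theta^{-1}(w_g^*)$, giving $(\alpha^u)_g=\theta^{-1}\circ\gamma_g\circ\theta$ and hence $A^{\alpha^u}=\theta^{-1}(A^\gamma)=\C 1_A$, is exactly the (implicit) final step, including the correct observation that one must land on a genuine conjugacy with $\gamma$ since ergodicity is not a cocycle-conjugacy invariant.
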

Applying the above corollary for a single automorphism (i.e., $G=\mathbb{Z}$), we have the following.
\begin{cor}
Every aperiodic automorphism $\alpha$ (i.e., $\alpha^n$ is not inner for every $n\in\mathbb{Z}\backslash \{0\}$) of a unital Kirchberg algebra $A$ is perturbed to an ergodic automorphism $\operatorname{Ad} u\circ \alpha$ by some unitary $u\in A$.
\end{cor}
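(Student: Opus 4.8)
The plan is to obtain this as the special case $G=\mathbb{Z}$ of the preceding corollary, translating the group-theoretic data into the language of a single automorphism. First I would note that $\mathbb{Z}$ is a countable infinite discrete amenable group, so the preceding corollary applies with $G=\mathbb{Z}$. A single automorphism $\alpha\in\operatorname{Aut}(A)$ generates the $\mathbb{Z}$-action $\hat\alpha$ defined by $\hat\alpha_n:=\alpha^n$, and this action is point-wise outer exactly when $\alpha^n=\hat\alpha_n$ is outer for all $n\in\mathbb{Z}\backslash\{0\}$, which is precisely the aperiodicity hypothesis. Hence $(A,\hat\alpha)$ is a unital Kirchberg algebra carrying a point-wise outer $\mathbb{Z}$-action, as required by the hypotheses of the previous corollary.

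Next I would apply the corollary to produce an $\hat\alpha$-cocycle $(u_n)_{n\in\mathbb{Z}}$ with $(A,\hat\alpha^{u})$ ergodic, and then collapse this cocycle to a single unitary. Setting $u:=u_1$, the cocycle identity $u_{m+n}=u_m\,\hat\alpha_m(u_n)$ forces $u_0=1$ and $u_n=u\,\alpha(u)\cdots\alpha^{n-1}(u)$ for $n>0$ (with the analogous formula for $n<0$), so the whole cocycle is recovered from $u$. Consequently the perturbed action is $(\hat\alpha^{u})_n=\ad u_n\circ\alpha^n$, and its generator is the single automorphism $(\hat\alpha^{u})_1=\ad u\circ\alpha$; thus $\hat\alpha^{u}$ is exactly the $\mathbb{Z}$-action generated by $\ad u\circ\alpha$.

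Finally I would unwind ergodicity. The fixed point algebra of the $\mathbb{Z}$-action $\hat\alpha^{u}$ equals $\{a\in A\mid(\ad u\circ\alpha)(a)=a\}$, the fixed point algebra of its generator, so the ergodicity furnished by the previous corollary says precisely that $\ad u\circ\alpha$ has trivial fixed point algebra, i.e. is ergodic as a single automorphism. This is the assertion. I do not anticipate a genuine obstacle, since all the analytic content is already carried by the previous corollary (and, through it, by Theorem \ref{fee} and the Gabe--Szab\'o theorem); the only points requiring care are the bookkeeping identifications of $\mathbb{Z}$-actions with single automorphisms and of $\mathbb{Z}$-cocycles with single unitaries, together with checking that the chosen cocycle convention makes $n\mapsto\ad u_n\circ\alpha^n$ a genuine $\mathbb{Z}$-action --- which is exactly the role of the cocycle identity.
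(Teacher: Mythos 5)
Your proposal is correct and is exactly the paper's approach: the paper derives this corollary by applying the preceding corollary with $G=\mathbb{Z}$, leaving implicit the bookkeeping you spell out (aperiodicity $\Leftrightarrow$ point-wise outerness of the generated $\mathbb{Z}$-action, recovery of the cocycle from $u=u_1$, and identification of the fixed point algebra of the $\mathbb{Z}$-action with that of its generator $\operatorname{Ad}u\circ\alpha$). Your filled-in details are all accurate.
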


\begin{remark}
In the case of von Neumann algebras, A. Marrakchi and S. Vaes show a surprising  result \cite[Theorem B]{MV} that the subset of $\alpha$-cocycles providing ergodic actions of a countable infinite amenable group is dense in the set of all $\alpha$-cocycles.
In the case of C*-algebras,
there often appear several topological obstructions related to $\alpha$-cocycles.
So it might be not so bad to ask whether the similar statement as in \cite{MV} holds or there is a topological obstruction to the statement in the C*-algebra case.
\end{remark}
\section*{Aknowledgemant}
We would like to thank Masaki Izumi for informing us the reference \cite{DE} and the result \cite[Theorem 1]{EOA}.
K. Matsumoto is supported by JSPS KAKENHI Grant Number 24K06775.
T. Sogabe is supported by JSPS KAKENHI Grant Number 24K16934.

\section{Preliminaries}
\subsection{Notation}
Let denote by $A, B, C, D, E, \cdots$  C*-algebras.
For a unital C*-algebra $A$,
we denote by $1_A$ (resp. $U(A)$) the unit (resp. the unitary group).
We write $1_A : \C\ni x\mapsto x1_A\in A$ by abuse of notation.
For a non-unital C*-algebra $A$ and its automorphism $\alpha$,
we write $A^\sim:=A+\mathbb{C}1$ and $\alpha^\sim(a+\lambda 1):=\alpha(a)+\lambda 1$ for its unitization.
Let $\mathbb{K}=\mathbb{K}(H)$ be the algebra of compact operators on the separable infinite dimensional  Hilbert space $H$.
In this paper,
we basically consider separable, nuclear C*-algebras.
Let $\mathcal{M}(B\otimes\mathbb{K})$ (resp. $\mathcal{Q}(B\otimes\mathbb{K}):=\M(B\otimes\K)/(B\otimes\K)$) be the multiplier algebra (resp. the Calkin algebra).
We write $q_B : \M(B\otimes\K)\to\Q(B\otimes\K)$ the quotient map.
In this paper,
the group $G$ is always assumed to be countable, infinite, discrete.
We write $\mathbb{K}_G:=\mathbb{K}\otimes \mathbb{K}(l^2(G))=\mathbb{K}(l^2(\mathbb{N})\otimes l^2(G))$.
Let $\rho_g : l^2(G)\ni \delta_h\mapsto \delta_{hg^{-1}}\in l^2(G)$ be the right regular representation.
By abuse of notation, we write $\rho : G\ni g\mapsto {\rm id}_{\mathbb{K}}\otimes\operatorname{Ad} \rho_g
\in \operatorname{Aut}(\mathbb{K}_G)$ (i.e., $\rho_g=\id_\K\otimes\ad\rho_g\in\operatorname{Aut}(\mathbb{K}_G)$).
Note that $(\K_G, \rho)$ and $(\K_G^{\otimes 2}, \rho^{\otimes 2})$ are conjugate by the Fell absorption.
Let us denote by $\lambda_g : l^2(G)\ni\delta_h\mapsto \delta_{gh}\in l^2(G)$ the left regular representation,
and we also denote by $\lambda_g :\mathcal{O}_\infty=\mathcal{O}(l^2(G))\ni S_h\mapsto S_{gh}\in\mathcal{O}_\infty$ the quasi-free action of $G$ on the Cuntz algebra $\mathcal{O}_\infty$ coming from the left regular representation where $\{S_g\}_{g\in G}$ are generating isometries of $\mathcal{O}_\infty$ with mutually orthogonal ranges.
To avoid the confusion with the terminology ``amenable action'',
we say nuclear G-algebra $(A, \alpha)$ if $A$ is a nuclear C*-algebra with a G-action $\alpha$.
For a $G$-invariant projection $p\in\M(A)^\alpha$,
we write the corner embedding $i_p : pAp\to A$.

The reader may refer the basics on the Hilbert bimodule, KK-theory for \cite{B} and the equivariant KK-groups (via Cuntz--Thomsen picture) for \cite{GS, Thom}.
For a right Hilbert $E$-module $X$,
we denote by $\langle\zeta, \xi\rangle_X\in E$ the $E$-valued inner product with the convention
\[\langle\zeta a, \xi b\rangle_X=a^*\langle\zeta, \xi\rangle_X b,\quad a, b\in E,\;\;\zeta, \xi\in X.\]
We use the same convention for the inner products of the Hilbert spaces.
We denote by $\mathcal{L}_E(X)$ the set of adjointable $E$-linear operators on $X$,
and call the pair $(\varphi : E\to \mathcal{L}_E(X), X)$ of $*$-homomorphism $\varphi$ and the right Hilbert module $X$ the Hilbert $E-E$-bimodule (see \cite{B}).
We write $a\zeta b:=\varphi(a)(\zeta b)$ for $a, b\in E, \;\zeta\in X$.
Note that $\mathcal{L}_B(H\otimes B)=\M(B\otimes\K(H))$.
For Hilbert $E-E$-bimodules $X_1, X_2$, we denote by \[T_\zeta : X_2\ni\xi\mapsto \zeta\otimes\xi\in X_1\otimes_EX_2,\quad \zeta\in X_1,\;\xi\in X_2,\] the creation operator.
Inner product of $X_1\otimes_E X_2$ is given by
\[\langle x_1\otimes x_2, y_1\otimes y_2\rangle=\langle x_2, \langle x_1, y_1\rangle_{X_1} y_2\rangle_{X_2},\]
and one has 
\[T_\zeta^* (x_1\otimes x_2)=\langle \zeta, x_1\rangle_{X_1} x_2.\]
We write
\[\mathcal{F}(X):=E\Omega_X\oplus\bigoplus_{k=1}^\infty X^{\otimes_E k}\]
where $\Omega_X$ is the vacuum vector.
We denote by $T_\zeta, \; \zeta\in X$ the creation operator
\[T_\zeta : x\Omega_X \mapsto \zeta x\in X,\; x\in E,\]
\[T_\zeta : X^{\otimes_E k}\ni\eta\mapsto \zeta\otimes\eta \in X^{\otimes k+1}.\]
For $\zeta:=\zeta_1\otimes \zeta_2\cdots \otimes \zeta_k\in X^{\otimes_E k}$,
we write $T_\zeta:=T_{\zeta_1}T_{\zeta_2}\cdots T_{\zeta_k}$ and $|\zeta|:=k$ for short.


\subsection{KK-groups}\label{prel1}
The equivariant KK-theory is introduced by G. G. Kasparov in a very general setting (see \cite{Kas}),
and we only use the simplest case of his theory where the algebras are all ungraded and the Bott elements are non-equivariant.
We refer to \cite{B} for the basics of the KK-theory and the reader may also refer to \cite{GS, Thom} for the Cuntz--Thomsen picture of $KK^G$-group.

For a $*$-homomorphism $\varphi : A\to B$,
we denote by
\[KK(\varphi):=(\varphi, B, 0)\in KK(A, B)\]
the corresponding Kasparov module,
and we write 
\[I_A:=KK(\id_A),\;\; KK(\varphi)\otimes I_D:=KK(\varphi\otimes\id_D)\]
(cf \cite[Definition 2.5.]{Kas}).
For $G$-algebras $(A, \alpha), \; (B, \beta)$,
a pair of $*$-homomorphism $\varphi : A\to B$ and $\beta$-cocycle $u_\cdot:=\{u_g\}_{g\in G}$ (i.e., unitaries $u_g\in\M(B)$ satisfying $u_g\beta_g(u_h)=u_{gh}$) are called a cocycle morphism if $\varphi\circ\alpha_g=\ad u_g\circ \beta_g\circ\varphi$ holds (\cite[Def. 1.2.]{GS}).
The left $B$-module $B^u:=B$ equipped with a $G$-action $g\cdot b:=u_g\beta_g(b)$ gives a Kasparov module denoted by
\[KK^G(\varphi, u_\cdot):=(\varphi, B^u, 0)\in KK^G((A, \alpha), (B, \beta))\]
(see \cite[Prop. 1.12.]{GS}).
Let $KK^G((A, \alpha), (B, \beta))^{-1}$ be the subset of $KK^G$-equivalences.
We denote by $\mu_{\C}\in KK^G((\C, \id), (\K_G, \rho))$ the Morita bimodule
\[\C e\curvearrowright e\K_G \curvearrowleft \K_G\]
with the $G$-action $g\cdot ex:=ex(1_{\M(\K)}\otimes\rho_g^*)$, $x\in \K_G$, where $e$ is a minimal projection.
One has $\mu_{\C}\in KK^G((\C, \id), (\K_G, \rho))^{-1}$ and the inverse is given by
\[\K_G\curvearrowright \K_G e\curvearrowleft \C e\]
with the $G$-action $g\cdot xe:=(1_{\M(\K)}\otimes\rho_g)xe$.
\begin{lem}\label{mori}
Let $(A, \alpha)$ be a separable, G-algebra with a G-invariant projection $p\in\mathcal{M}(A)^{{\alpha}}$ satisfying $\overline{ApA}=A$.
Then,
the G-equivariant embedding $i_p : (pAp, \alpha)\to (A, \alpha)$ is a $KK^G$-equivalence.
\end{lem}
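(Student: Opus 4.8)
The plan is to realize the class $KK^G(i_p)$ as the class of a $G$-equivariant imprimitivity bimodule and then to invoke equivariant Morita invariance. First I would set up the relevant bimodules. Regard $pA:=\{pa : a\in A\}$ as a right Hilbert $A$-module with inner product $\langle pa, pa'\rangle_A=a^*pa'$ and left $pAp$-action by multiplication; since $p\in\M(A)^\alpha$ is $G$-invariant one has $\alpha_g(pa)=p\alpha_g(a)\in pA$, so $\alpha$ restricts to a compatible diagonal $G$-action on $pA$, giving a class $[pA]\in KK^G((pAp,\alpha),(A,\alpha))$. Symmetrically, $Ap$ is a right Hilbert $pAp$-module with left $A$-action, yielding $[Ap]\in KK^G((A,\alpha),(pAp,\alpha))$.

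Next I would check that $KK^G(i_p)=[pA]$. Writing $1=1_{\M(A)}$, the invariant projection $p$ gives a $G$-equivariant orthogonal decomposition of right Hilbert $A$-modules $A=pA\oplus(1-p)A$, and $pAp$ acts on this module through $i_p$ (i.e.\ by left multiplication) as the imprimitivity action on the summand $pA$ and as $0$ on $(1-p)A$, because $pAp\cdot(1-p)A=0$. The summand $(1-p)A$ is therefore a degenerate $G$-equivariant Kasparov module and contributes $0$ in $KK^G$, so $KK^G(i_p)=(i_p,A,0)=[pA]$.

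It then remains to show that $[pA]$ is a $KK^G$-equivalence with inverse $[Ap]$, which is where the hypothesis $\overline{ApA}=A$ enters: it says exactly that the $A$-valued inner product on $pA$ is full, $\overline{\operatorname{span}}\,\langle pA,pA\rangle_A=\overline{ApA}=A$, while the left $pAp$-valued pairing ${}_{pAp}\langle pa,pa'\rangle=paa'^*p$ spans a dense subset of $pAp$ automatically (as $\overline{\operatorname{span}}\,AA^*=A$). Hence $pA$ is a $(pAp,A)$-imprimitivity bimodule with dual $Ap$. Because $p$ is $G$-invariant, the isomorphisms $pA\otimes_A Ap\cong pAp$ and $Ap\otimes_{pAp}pA\cong A$ are $G$-equivariant, so $[pA]\,\hat{\otimes}_A\,[Ap]=I_{pAp}$ and $[Ap]\,\hat{\otimes}_{pAp}\,[pA]=I_A$ in the respective $KK^G$-groups; this is the equivariant Morita invariance of Kasparov theory (cf.\ \cite{Kas, GS}). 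Combined with the previous step, this shows $i_p$ is a $KK^G$-equivalence.

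The step requiring the most care is the verification that $pA$ is a genuinely $G$-equivariant imprimitivity bimodule — in particular that fullness of the right inner product is precisely the hypothesis $\overline{ApA}=A$, and that the Morita pairings intertwine the two restrictions of $\alpha$ — after which invertibility is a formal consequence (the non-equivariant prototype being the equivalence $\mu_{\C}$ recorded above for the minimal projection $p=e\in\K_G$).
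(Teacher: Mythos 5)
Your proof is correct and takes essentially the same route as the paper's: the paper also identifies $KK^G(i_p)$ with the bimodule class $(r, pA, 0)$ by splitting off the degenerate summand $(0,(1-p)A,0)$, and then inverts it against the dual class $(R, Ap, 0)$ given by left multiplication of $A$ on $Ap$, with the fullness hypothesis $\overline{ApA}=A$ supplying the equivariant Morita equivalence. Your write-up merely spells out the equivariance and fullness verifications that the paper leaves implicit.
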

\begin{proof}
Let $r : pAp\to \mathcal{L}_A(pA)$ and $R : A\to \mathcal{L}_{pAp} (Ap)$ be the left multiplications, then the Kasparov modules $(r, pA, 0)$ and $(R, Ap, 0)$ are mutually inverses to each other (i.e., they are $KK^G$-equivalences) by the assumption $\overline{ApA}=A$.
Now it is easy to see
\[KK^G(i_p)= (r, pA, 0)\oplus (0, (1-p)A, 0)=(r, pA, 0)\in KK^G((pAp, \alpha), (A, \alpha))^{-1}.\]
\end{proof}

We denote by $e_{g,h}$ the matrix units of $\K(l^2(G))$ in the following lemma.
\begin{lem}[{see \cite[Sec. 2]{IM}}]\label{IM}
    For a cocycle morphism $(\varphi, u_\cdot) : (A, \alpha)\to (B, \beta)$,
    there exists a unitary
    \[W:=\sum_{g\in G} u_{g^{-1}}\otimes 1_{\M(\K)}\otimes e_{g,g}\in \M(B\otimes\K_G)\]
    making the following diagram commute
    \[\xymatrix{
    (A, \alpha)\ar[r]^{KK^G(\varphi, u)}\ar[d]^{I_A\otimes\mu_{\C}}&(B, \beta)\ar[d]^{I_B\otimes\mu_{\C}}\\
    (A\otimes \K_G, \alpha\otimes\rho)\ar@<-0.8 ex>[r]_{KK^G(\ad W^*\circ(\varphi\otimes\id), 1)}&(B\otimes \K_G, \beta\otimes\rho).
    }\]
\end{lem}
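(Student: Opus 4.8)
The plan is to split the proof into two independent pieces: an explicit multiplier computation showing that conjugation by $W$ turns the stabilized cocycle morphism into a \emph{genuine} equivariant $*$-homomorphism, and a formal identification of the two composites in the square as the two standard factorizations of one external Kasparov product. First I would record that $W=\sum_{g}u_{g^{-1}}\otimes 1_{\M(\K)}\otimes e_{g,g}$ is a unitary in $\M(B\otimes\K_G)$, since the $e_{g,g}$ are mutually orthogonal projections summing strictly to $1$ and each $u_{g^{-1}}$ is unitary. Writing $\tilde{u}_g:=u_g\otimes 1_{\M(\K)}\otimes 1$, the cocycle condition $\varphi\circ\alpha_g=\ad u_g\circ\beta_g\circ\varphi$ tensored with $\id_{\K_G}$ shows that $(\varphi\otimes\id,\tilde{u}_\cdot)$ is a cocycle morphism $(A\otimes\K_G,\alpha\otimes\rho)\to(B\otimes\K_G,\beta\otimes\rho)$ and that its class coincides with the external product $KK^G(\varphi,u_\cdot)\otimes I_{\K_G}$.

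The computational heart is the identity
\[\tilde{u}_g\,(\beta\otimes\rho)_g(W)=W\qquad(g\in G).\]
I would verify it directly: since $\rho_g e_{h,h}\rho_g^{*}=e_{hg^{-1},hg^{-1}}$, one gets $(\beta\otimes\rho)_g(W)=\sum_h\beta_g(u_{h^{-1}})\otimes 1\otimes e_{hg^{-1},hg^{-1}}$; reindexing by $k=hg^{-1}$ and multiplying on the left by $\tilde{u}_g$, the cocycle relation $u_g\beta_g(u_{g^{-1}k^{-1}})=u_{k^{-1}}$ collapses the sum back to $W$. This single identity does double duty. Rewritten as $(\beta\otimes\rho)_g(W^{*})=W^{*}\tilde{u}_g$, it shows that $\psi:=\ad W^{*}\circ(\varphi\otimes\id)$ intertwines $\alpha\otimes\rho$ and $\beta\otimes\rho$ \emph{with trivial cocycle}, so the bottom arrow $KK^G(\psi,1)$ is well defined. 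On the other hand, by invariance of the class of a cocycle morphism under conjugating the homomorphism by a unitary $w$ (with cocycle replaced by $w u_g\beta_g(w^{*})$, via the Kasparov-module isomorphism $b\mapsto wb$), taking $w=W^{*}$ sends $\tilde{u}_g$ to $W^{*}\tilde{u}_g(\beta\otimes\rho)_g(W)=1$. Hence $KK^G(\varphi\otimes\id,\tilde{u}_\cdot)=KK^G(\psi,1)$.

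Finally I would assemble the square from associativity of the external product. The element $KK^G(\varphi,u_\cdot)\otimes\mu_{\C}\in KK^G((A,\alpha),(B\otimes\K_G,\beta\otimes\rho))$ admits two factorizations through internal products: on one side $\bigl(KK^G(\varphi,u_\cdot)\otimes I_{\C}\bigr)\hat{\otimes}\bigl(I_B\otimes\mu_{\C}\bigr)=KK^G(\varphi,u_\cdot)\hat{\otimes}(I_B\otimes\mu_{\C})$, which is the top--right composite; on the other side $\bigl(I_A\otimes\mu_{\C}\bigr)\hat{\otimes}\bigl(KK^G(\varphi,u_\cdot)\otimes I_{\K_G}\bigr)=(I_A\otimes\mu_{\C})\hat{\otimes}KK^G(\varphi\otimes\id,\tilde{u}_\cdot)$. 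Combining with the previous paragraph, the right-hand side equals $(I_A\otimes\mu_{\C})\hat{\otimes}KK^G(\psi,1)$, the left--bottom composite, which is exactly the asserted commutativity.

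The main obstacle is bookkeeping rather than depth: one must carry the three legs $B\otimes\K\otimes\K(l^2(G))$ and the two actions $\alpha\otimes\rho$, $\beta\otimes\rho$ through every step without index slips, and one must match the conventions for the external product, for $I\otimes\mu_{\C}$, and for the cocycle attached to $KK^G(\varphi\otimes\id,\tilde{u}_\cdot)$ so that Kasparov's associativity $x\otimes y=(x\otimes I)\hat{\otimes}(I\otimes y)=(I\otimes y)\hat{\otimes}(x\otimes I)$ applies verbatim. The only genuinely nontrivial computation, the displayed identity $\tilde{u}_g(\beta\otimes\rho)_g(W)=W$, is short once $\rho_g e_{h,h}\rho_g^{*}$ is identified.
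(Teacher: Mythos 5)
Your proof is correct and takes essentially the same approach as the paper: the same reduction via Kasparov's external-product commutativity to the identity $KK^G(\varphi\otimes\id, u\otimes 1)=KK^G(\ad W^*\circ(\varphi\otimes\id),1)$, the same key cocycle computation (your $\tilde u_g\,(\beta\otimes\rho)_g(W)=W$ is the paper's $u_g\otimes 1=W(\beta_g\otimes\rho_g)(W^*)$ rearranged), and the same identification of the two Kasparov classes via the module isomorphism $x\mapsto W^*x$. Your explicit verification that $\ad W^*\circ(\varphi\otimes\id)$ is genuinely equivariant is left implicit in the paper but is the same underlying identity.
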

\begin{proof}
    Since 
    \[KK^G(\varphi, u)\hat{\otimes} (I_B\otimes\mu_{\C})=(I_A\otimes\mu_{\C})\hat{\otimes} (KK^G(\varphi, u)\otimes I_{\K_G})\]
    (see \cite[Theorem 2.14, 8)]{Kas}),
    it is enough to show
    \[KK^G(\varphi\otimes\id, u\otimes 1)=KK^G(\ad W^*\circ (\varphi\otimes \id), 1)\in KK^G((A\otimes\K_G, \alpha\otimes\rho), (B\otimes\K_G, \beta\otimes\rho)).\]
    By definition,
    one has 
    \begin{align*}
    u_g\otimes 1=&u_g\otimes 1_{\M(\K)}\otimes 1_{\M(\K)}\\
    =&\sum_{h\in G}u_{gh^{-1}}\beta_g(u^*_{h^{-1}})\otimes 1_{\M(\K)}\otimes e_{hg^{-1}, hg^{-1}}\\
    =&W(\beta_g\otimes\rho_g)(W^*),
    \end{align*}
    the unitary $W$ gives a $G$-equivariant isomorphism of left $B$-module
    \[(B\otimes\K_G)^{u\otimes 1}\ni x\mapsto W^*x\in (B\otimes\K_G)^1\]
    which implies
    \begin{align*}KK^G(\varphi\otimes\id, u\otimes 1)=&(\varphi\otimes\id, (B\otimes\K_G)^{u\otimes 1}, 0)\\
    \cong& (\ad W^*\circ (\varphi\otimes\id), (B\otimes\K_G)^1, 0)\\
    =&KK^G(\ad W^*\circ (\varphi\otimes\id), 1).\end{align*}
\end{proof}

We denote by
\[F : KK^G((A, \alpha), (B, \beta))\ni KK^G(\varphi, u_\cdot)\mapsto KK(\varphi)\in KK(A, B)\]
the forgetful functor.
For the trivial actions,
the definition of Kasparov module gives the map
\[KK(A, B)\ni KK(\varphi)\mapsto KK^G(\varphi, 1)\in KK^G((A, \id), (B, \id))\]
which is compatible with the Kasparov product.
Thus, one has the non-equivariant Bott element
\[b\in KK(\C, S^2)^{-1}\subset KK^G((\C, \id), (S^2, \id))^{-1},\]
where $S^2:=C_0(0, 1)\otimes C_0(0, 1)$.

\subsection{Kirchberg algebras}
In this paper,
the Kirchberg algebras are separable, nuclear, simple, purely infinite C*-algebras.
We do not assume the UCT for the Kirchberg algebras.
The reader may refer to \cite{Ro} for more details.
Thanks to the Kirchberg--Phillips' theorem,
the isomorphism classes of the Kirchberg algebras are completely determined by KK-theory.
We refer \cite{Kirchberg, Phillips, Ro, gabe} for the classification of Kirchberg algebras.
\begin{thm}[{Kirchberg--Phillips' theorem}]\label{KP}
    Let $A, B$ be unital Kirchberg algebras.
    There exists an isomorphism $\theta : A\to B$ if and only if there exists a KK-equivalence $k\in KK(A, B)^{-1}$ satisfying $KK(1_A)\hat{\otimes}k=KK(1_B)$. 
The isomorphism is chosen to satisfy $KK(\theta)=k$. 
\end{thm}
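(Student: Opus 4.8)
The plan is to prove the nontrivial ($\Leftarrow$) direction by combining an \emph{existence} theorem, a \emph{uniqueness} theorem, and Elliott's approximate-intertwining argument; the ($\Rightarrow$) direction is immediate. Given a unital isomorphism $\theta$ one sets $k:=KK(\theta)$, which is a $KK$-equivalence with inverse $KK(\theta^{-1})$, and unitality gives $KK(1_A)\hat{\otimes}KK(\theta)=KK(\theta\circ 1_A)=KK(1_B)$. Under the identification $KK(\C,A)\cong K_0(A)$ the element $KK(1_A)$ corresponds to $[1_A]_0$, so the hypothesis on $k$ says precisely that the induced map $k_\ast\colon K_0(A)\to K_0(B)$ sends $[1_A]_0$ to $[1_B]_0$; this is the extra datum needed to upgrade a purely $KK$-theoretic equivalence to a \emph{unital} $*$-isomorphism.

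The first pillar is existence: every $k$ as in the theorem is realized by a unital $*$-homomorphism. I would first represent $k$ by a Kasparov module and, using separability and nuclearity, pass to a genuine $*$-homomorphism $A\to B\otimes\K$ after stabilizing, the Cuntz--Thomsen picture referenced above making this lifting transparent. Because $A\cong A\otimes\mathcal{O}_\infty$ and $B\cong B\otimes\mathcal{O}_\infty$ (Kirchberg's $\mathcal{O}_\infty$-absorption), one arranges the homomorphism to be full and compresses it into a unital corner; here the hypothesis $k_\ast[1_A]_0=[1_B]_0$ is exactly what guarantees that the projection $\phi(1_A)$ is Murray--von Neumann equivalent to $1_B$, so that after an inner perturbation one obtains a unital $\phi\colon A\to B$ with $KK(\phi)=k$. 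Lemma \ref{mori} is the mechanism identifying the relevant corner with $B$ up to $KK$-equivalence.

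The second, and by far the hardest, pillar is the uniqueness theorem: any two unital $*$-homomorphisms $\phi,\psi\colon A\to B$ with $KK(\phi)=KK(\psi)$ are approximately unitarily equivalent. I would argue via the stable uniqueness theorem in the Dadarlat--Eilers/Kirchberg form: $\phi$ and $\psi$ become approximately unitarily equivalent after adding a sufficiently large common ``ample'' summand, and one then reabsorbs that summand using pure infiniteness together with $\mathcal{O}_\infty$-stability. This is where essentially all the analytic weight lies, and it is the step I expect to be the main obstacle: it requires Kirchberg's absorption theorems, the structure theory of (strongly) purely infinite algebras, and a careful asymptotic-morphism argument to pass from the stable to the unstable statement.

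Finally I would run the approximate-intertwining argument. Representing $k$ by a unital $\phi\colon A\to B$ and $k^{-1}$ by a unital $\psi\colon B\to A$, the compositions satisfy $KK(\psi\circ\phi)=k\hat{\otimes}k^{-1}=I_A=KK(\id_A)$ and $KK(\phi\circ\psi)=I_B=KK(\id_B)$; all four maps are unital, so uniqueness yields $\psi\circ\phi\sim_{\mathrm{au}}\id_A$ and $\phi\circ\psi\sim_{\mathrm{au}}\id_B$. The two-sided approximate intertwining then produces unitaries interlacing $\phi$ and $\psi$ and, in the limit, a genuine $*$-isomorphism $\theta\colon A\to B$ that is approximately unitarily equivalent to $\phi$. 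Since approximately unitarily equivalent homomorphisms induce the same $KK$-class, this gives $KK(\theta)=KK(\phi)=k$, completing the proof.
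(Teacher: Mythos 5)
The paper does not actually prove this statement: it is quoted as the classical Kirchberg--Phillips theorem with pointers to Kirchberg, Phillips, R{\o}rdam and Gabe, so the only comparison available is with the literature proof you are reconstructing. Your architecture (an existence theorem, a uniqueness theorem, and a two-sided Elliott intertwining) is indeed the standard one, the forward direction is fine, and the existence pillar is correct in outline.

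The genuine gap is in your final step: the assertion that ``approximately unitarily equivalent homomorphisms induce the same $KK$-class'' is false in general. Approximate unitary equivalence only forces the two classes to agree in $KL(A,B)$, the quotient of $KK(A,B)$ by the closure of zero (for UCT algebras this kernel is $\operatorname{Pext}(K_*(A),K_{*+1}(B))$), and whenever that subgroup is nonzero there exist homomorphisms between Kirchberg algebras that are approximately unitarily equivalent yet have distinct $KK$-classes; this is precisely why $KL$ exists as a separate invariant. Consequently your intertwining does produce an isomorphism $\theta$, but you only control the image of $KK(\theta)$ in $KL$, so the last clause of the theorem --- that $\theta$ can be chosen with $KK(\theta)=k$ --- is not established. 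That clause is not cosmetic for this paper: it is used, e.g., in the proof of Theorem \ref{Mats}, where the isomorphism $\psi:A_E\to A$ must satisfy $KK(\psi)=k_E\hat{\otimes}\eta^{-1}$ on the nose in order to conclude $KK(\iota_1)=KK(\iota_0)$. The repair is exactly the point of Phillips' argument: one must work with \emph{asymptotic} unitary equivalence (norm-continuous paths of unitaries $(u_t)_{t\in[0,\infty)}$), for which the uniqueness theorem reads ``$KK(\varphi)=KK(\psi)$ if and only if $\varphi$ and $\psi$ are asymptotically unitarily equivalent'' for full homomorphisms between unital Kirchberg algebras; asymptotic unitary equivalence does preserve $KK$-classes (by homotopy invariance), and the two-sided intertwining must be run with such continuous paths. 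With ``approximate'' replaced by ``asymptotic'' throughout your second and third pillars, your proposal becomes the standard proof.
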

By the recent breakthrough of J. Gabe and G. Szab\'{o},
we also have the equivariant version of the above theorem (see \cite{GS} for more general statement).
\begin{thm}[{\cite[Theorem 6.2.]{GS}}]
    Let $G$ be a countable discrete amenable group, and let $(A, \alpha), (B, \beta)$ be unital Kirchberg algebras with point-wise outer actions (i.e., $\alpha_g, \beta_g$ are not inner for $g\in G\backslash \{e\}$).
    If there exists a $KK^G$-equivalence $k\in KK^G((A, \alpha), (B, \beta))^{-1}$ satisfying $KK(1_A)\hat{\otimes} F(k)=KK(1_B)$,
    then there is an isomorphism $\theta : A\to B$ and a $\beta$-cocycle $u_\cdot$ satisfying $\theta\circ\alpha_g=\ad u_g\circ\beta_g\circ\theta$, $k=KK^G(\theta, u_\cdot)$.
\end{thm}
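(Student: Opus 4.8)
The plan is to prove this as the $G$-equivariant refinement of the Kirchberg--Phillips classification (Thm. \ref{KP}), carried out entirely in the category of cocycle morphisms rather than plain $*$-homomorphisms. The heart of the matter is to classify unital cocycle morphisms $(A,\alpha)\to(B,\beta)$ up to approximate $G$-unitary equivalence by their $KK^G$-classes (subject to the unit constraint $KK(1_A)\hat{\otimes} F(k)=KK(1_B)$), and then to run a two-sided Elliott-type intertwining to upgrade this into an honest cocycle conjugacy realizing the prescribed element $k$. The whole scheme follows the familiar \emph{existence} $+$ \emph{uniqueness} $+$ \emph{intertwining} pattern, but each ingredient must be established dynamically.

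First I would establish the existence half: every class $k$ satisfying the constraint is represented by a genuine unital cocycle morphism $(\varphi, u_\cdot)$ with $KK^G(\varphi, u_\cdot)=k$. Starting from the Cuntz--Thomsen picture of $KK^G$ one obtains some representative, a priori defined only after suspension/stabilization and not unit-preserving; the condition $KK(1_A)\hat{\otimes} F(k)=KK(1_B)$ is precisely what allows one to correct the unit after absorbing a suitable model algebra equivariantly. This step rests on two equivariant Kirchberg-type absorption phenomena: that point-wise outer actions of an amenable $G$ on a Kirchberg algebra are equivariantly $\mathcal{O}_\infty$-stable (so $(B,\beta)$ may be taken stable under tensoring with the relevant model), and an equivariant version of Kirchberg's $\mathcal{O}_2$-embedding technology to turn abstract $KK^G$-data into actual equivariant maps.

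The uniqueness half is where the genuine difficulty lies. I would aim to show that two unital cocycle morphisms $(\varphi, u_\cdot)$ and $(\psi, v_\cdot)$ with $KK^G(\varphi, u_\cdot)=KK^G(\psi, v_\cdot)$ are approximately $G$-unitarily equivalent, i.e.\ there are unitaries $w_n\in B$ asymptotically intertwining them compatibly with the cocycles. This is the equivariant analogue of Kirchberg's uniqueness theorem, and both hypotheses enter decisively: point-wise outerness supplies the Rokhlin/freeness needed to absorb the $G$-action into the central sequence algebra $B_\infty\cap B'$ (with its induced $G$-action), while amenability is what makes the model action strongly self-absorbing in the dynamical sense and makes the averaging behind the absorption go through. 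The technical core is verifying that $B_\infty\cap B'$ equivariantly contains a unital copy of the model, which is what permits the two representatives to be connected.

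With existence and uniqueness available, the conclusion follows by an equivariant Elliott intertwining: alternately approximate the forward and backward cocycle morphisms realizing $k$ and $k^{-1}$, using uniqueness to make successive approximations agree up to unitaries, and pass to the limit to obtain $\theta : A\to B$ together with a $\beta$-cocycle $u_\cdot$ satisfying $\theta\circ\alpha_g=\ad u_g\circ\beta_g\circ\theta$ and $KK^G(\theta, u_\cdot)=k$. The main obstacle, absorbing essentially all of the effort, is the equivariant $\mathcal{O}_\infty$-absorption theorem for point-wise outer amenable actions and the resulting dynamical uniqueness theorem; once these are in place the existence step and the intertwining are comparatively formal. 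This is of course exactly the content of \cite{GS}, so in the present paper the statement is invoked as a black box.
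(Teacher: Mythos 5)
The paper does not prove this statement at all: it is quoted as \cite[Theorem 6.2.]{GS} and used as a black box (together with the existence theorem, Theorem \ref{yeah}, and the cocycle-morphism formalism), so there is no internal proof to compare your proposal against. Your outline is, as you yourself note, a summary of the strategy of the cited work rather than a proof: the existence--uniqueness--intertwining skeleton, the classification of unital cocycle morphisms by $KK^G$ under the unit condition $KK(1_A)\hat{\otimes}F(k)=KK(1_B)$, and the two-sided Elliott intertwining in the cocycle category are all faithful to Gabe--Szab\'{o}. One substantive caveat: your description of the uniqueness step, which locates the difficulty in Rokhlin/freeness-type arguments inside the central sequence algebra $B_\infty\cap B'$ (equivariantly embedding a unital copy of the model there), is closer in spirit to the Izumi--Matui approach to classifying group actions. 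The actual Gabe--Szab\'{o} proof deliberately routes around tower/Rokhlin arguments, working instead with absorbing (cocycle) representations, equivariant $\mathcal{O}_\infty$-stability of pointwise outer actions, and Gabe's $\mathcal{O}_\infty$-stable classification technology; this is precisely what makes the theorem available for all countable amenable groups rather than only those admitting Rokhlin-type structures. Since the present paper only invokes the statement and never its proof, this discrepancy has no bearing on the paper itself, but you should not present the central-sequence picture as the mechanism of the cited theorem.
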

We will use the following existence theorem to construct our "Busby invariant" in Sec. \ref{busby}.
\begin{thm}[{\cite[Theorem 5.5.]{GS}}]\label{yeah}
    Let $G$ be a countable discrete amenable group.
    Let $A$ be separable exact C*-algebra with an action $\alpha : G\curvearrowright A$, and let $B$ be a Kirchberg algebra with point-wise outer action $\beta : G\curvearrowright B$.
    For every $x\in KK^G((A, \alpha), (B\otimes\K, \beta\otimes\id))$,
    there exists an injective cocycle morphism $(\varphi, u_\cdot) : (A, \alpha)\hookrightarrow (B\otimes\K, \beta\otimes\id)$ satisfying $x=KK^G(\varphi, u_\cdot)$.
\end{thm}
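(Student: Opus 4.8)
The plan is to realize the class $x$ by combining the Cuntz--Thomsen picture of $KK^G$ with an equivariant absorption principle for the purely infinite target $(B\otimes\K, \beta\otimes\id)$. First I would invoke the Cuntz--Thomsen picture (see \cite{GS, Thom}) to write $x$ as a Cuntz pair, i.e. as the formal difference of two cocycle morphisms $(\phi_\pm, v^\pm_\cdot): (A,\alpha)\to (\M(B\otimes\K), \beta\otimes\id)$ into the multiplier level, which are equivariant modulo the stable ideal $B\otimes\K$ and whose difference $\phi_+ - \phi_-$ takes values in $B\otimes\K$. The goal is then to absorb $\phi_-$ into an ample degenerate representation so that it becomes invertible by a $(\beta\otimes\id)$-cocycle, leaving a conjugate of $\phi_+$ as an honest cocycle morphism into $B\otimes\K$.

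Next I would construct an equivariant absorbing representation. Since $A$ is separable and exact, one builds a nuclear, equivariant, degenerate representation $\sigma: (A,\alpha)\to (\M(B\otimes\K), \beta\otimes\id)$ that is absorbing, in the sense that $\sigma\oplus\sigma$ is approximately unitarily equivalent to $\sigma$ through a path of unitaries that is asymptotically compatible with the cocycle structure (an equivariant Weyl--von Neumann--Voiculescu statement). Adding such a $\sigma$ to both legs $\phi_+$ and $\phi_-$ alters neither the $KK^G$-class nor the difference $\phi_+-\phi_-$, but makes both legs absorbing. This is the step where point-wise outerness of $\beta$ and the Kirchberg property $B\cong B\otimes\mathcal{O}_\infty$ enter: outerness lets one equivariantly embed a model action on $\mathcal{O}_\infty$, and $\mathcal{O}_\infty$-absorption supplies the infinite repetition needed for the intertwining.

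The core step is the one-sided intertwining. Using amenability of $G$, which underlies the approximately central, equivariant finite-partition arguments of the Gabe--Szab\'{o} machinery, I would show that the absorbing $\phi_-$ is equivalent via a $(\beta\otimes\id)$-cocycle $w_\cdot$ to a representation small enough that $\ad w^*\circ\phi_+$ descends to a genuine $*$-homomorphism $\varphi: A\to B\otimes\K$ together with an accumulated cocycle $u_\cdot$, and by construction $KK^G(\varphi, u_\cdot)=x$. Finally, to arrange injectivity I would add to $\varphi$ a faithful equivariant degenerate summand coming from a faithful covariant representation of $(A,\alpha)$ on the stable algebra $B\otimes\K$ (available since $A$ is separable and $B\otimes\K$ is stable); this changes neither the class nor the previous conclusions while forcing $\varphi$ to be injective.

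The hard part will be the equivariant absorption and one-sided intertwining, since it is here that amenability of $G$, exactness of $A$, and point-wise outerness of $\beta$ must be combined simultaneously. The non-equivariant analogue is Kirchberg's classical existence theorem, but the covariance constraint forces every unitary implementing the intertwining to be asymptotically compatible with the cocycles $v^\pm_\cdot$, and controlling this compatibility is precisely the technical heart of \cite[Theorem 5.5.]{GS}.
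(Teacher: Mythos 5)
The first thing to note is that the paper contains no proof of this statement: Theorem \ref{yeah} is quoted verbatim from Gabe--Szab\'{o} \cite[Theorem 5.5]{GS} and is used purely as a black box in Section \ref{busby}, where it supplies the cocycle embedding $(\varphi, u_\cdot) : C\hookrightarrow \mathcal{O}_\infty^s$ realizing a prescribed $KK^G$-class. So there is no in-paper argument to compare your proposal against; at the level of this paper, the legitimate ``proof'' is the citation itself.

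Judged as a blind attempt, your outline does reconstruct the correct shape of the Gabe--Szab\'{o} argument (Cuntz--Thomsen picture, equivariant absorption, cocycle-compatible one-sided intertwining, with amenability of $G$, exactness of $A$, and pointwise outerness of $\beta$ entering roughly where you say). But as a proof it has a genuine gap, which you concede in your last paragraph: the two pivotal steps --- the equivariant absorbing-representation theorem and the intertwining ``asymptotically compatible with the cocycle structure'' --- are stated as goals, not proven, and they \emph{are} the content of \cite[Theorem 5.5]{GS}. In particular, equivariant absorption does not follow from separability and exactness alone: one needs the target action $(B\otimes\K, \beta\otimes\id)$ to be equivariantly $\mathcal{O}_\infty$-absorbing in the dynamical sense (the ``isometrically shift-absorbing'' property of Gabe--Szab\'{o}), and deducing this from amenability of $G$ together with pointwise outerness of $\beta$ is itself a substantial theorem; likewise the intertwining requires a dynamical stable-uniqueness argument, not Kirchberg's classical one run ``equivariantly'' with unitaries replaced by cocycles. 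Your final injectivity repair is also too loose: adding a degenerate summand produces a map into $\M(B\otimes\K)$ rather than into $B\otimes\K$, so what one actually needs is a faithful equivariant $*$-homomorphism into $B\otimes\K$ that is null in $KK^G$ (e.g., one factoring through a cone), and verifying both faithfulness and nullity is an additional step. As it stands, the proposal is an accurate table of contents for the cited proof, not a proof.
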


\subsection{Ext-groups and exact triangles}\label{nati}
We refer to \cite{B} for the basics of the extension of C*-algebras.
For separable, nuclear C*-algebras $A, B$,
an extension of $A$ by $B\otimes\K$ is a short exact sequence of upper sequence in the diagram
\[\xymatrix{B\otimes\K\ar@{=}[d]\ar[r]& E\ar[r]^{\pi_E}\ar[d]&A\ar[d]^{\tau}\\
B\otimes\K\ar[r]&\M(B\otimes\K)\ar[r]^{q_B}&\Q(B\otimes\K)}\]
where $q_B$ is the quotient map.

The map $\tau$ is called the Busby invariant and the extension is called unital (resp. essential) if $\tau$ is unital (resp. injective).
For an essential extension,
we identify $E$ with a subalgebra of $\M(B\otimes\K)$ and write $\pi_E=\tau^{-1}\circ q_B$.
For the nuclear C*-algebra $A$,
the set of equivalence classes of Busby invariants is denoted by $\operatorname{Ext}^1(A, B\otimes\K)$ (see \cite[Sec. 15]{B}).

We refer to \cite[Sec. 2, Appendix]{MN} for  the category $KK^G$ whose morphism set is given by the group $KK^G((A, \alpha), (B, \beta))$ for objects $(A, \alpha), (B, \beta)$.
A sequence
\[(SA, S\alpha)\to (C, \gamma)\to (B, \beta)\to (A, \alpha)\]
in the category $KK^G$ is called an exact triangle
if there exist $G$-equivariant $*$-homomorphism $f : B'\to A'$ and $KK^G$ equivalences $\eta_A\in KK^G((A, \alpha), (A', \alpha'))^{-1},\; \eta_B\in KK^G((B, \beta), (B', \beta'))^{-1},\; \eta_C\in KK^G((C, \gamma), (\operatorname{Cone}(f), \sigma))^{-1}$ making the following diagram commute
\[\xymatrix{
(SA, S\alpha)\ar[r]\ar[d]^{S\eta_A}&(C, \gamma)\ar[r]\ar[d]^{\eta_C}&(B, \beta)\ar[r]\ar[d]^{\eta_B}&(A, \alpha)\ar[d]^{\eta_A}\\
(SA', S\alpha')\ar[r]&(\operatorname{Cone}(f), \sigma)\ar[r]&(B', \beta')\ar[r]^{KK^G(f)}&(A', \alpha'),
}\]
where we write
\[\operatorname{Cone}(f):=\{(a(t), b)\in (C_0(0, 1]\otimes A')\oplus B'\;|\; a(1)=f(b)\},\]
\[\sigma_g (a(t), b):=(\alpha'_g(a(t)), \beta'_g(b)), \;\;\; g\in G.\]
We write $ev_1 : \operatorname{Cone}(f)\ni (a(t), b)\mapsto b\in B$.
For a nuclear C*-algebra $A$ and its extension $B\otimes\K\xrightarrow{l_E} E\xrightarrow{\pi_E} A$,
we write \[j(E) : B\otimes\K\ni x\mapsto (0, x)\in \operatorname{Cone}(\pi_E),\]
\[\iota(E) : SA\ni a(t)\mapsto (a(t), 0)\in \operatorname{Cone}(\pi_E),\]
and it is well-known that $j(E)$ is a $KK$-equivalence providing the following exact triangles (see \cite{CS}, \cite[Sec. 19.5.]{B}, \cite[Sec. 2.3.]{MN})
\[\xymatrix{
SA\ar@{-->}[r]\ar@{=}[d]&B\otimes\K\ar[r]^{KK(l_E)}\ar[d]^{KK(j(E))}&E\ar[r]^{KK(\pi_E)}\ar@{=}[d]&A\ar@{=}[d]\\
SA\ar@<-0.8 ex>[r]_{KK(\iota(E))}&\operatorname{Cone}(\pi_E)\ar@<-0.8 ex>[r]_{KK(ev_1)}&E\ar@<-0.8 ex>[r]_{KK(\pi_E)}&A.
}\]
For a compact group $H$ and an extension of nuclear $H$-C*-algebras $(J, \theta)\xrightarrow{l_F} (F, \theta)\xrightarrow{\pi_F} (F/J, \bar{\theta})$ with the $H$-equivariant maps,
nuclearlity gives a completely positive, contractive section $\rho : F/J\to F$ satisfying $\pi_F\circ \rho=\id_{F/J}$ and the compactness of $H$ allows us to modify $\rho$ to the equivariant section
$\tilde{\rho}(\cdot):=\int_H \theta_{h^{-1}}(\rho(\bar{\theta}_h(\cdot)))dh$.
Combining the Baum--Connes conjecture \cite{HigKas}, \cite[Theorem 8.5.]{MN} with \cite[p186, Theorem 5.4.]{Schochet},
we obtain the following.
\begin{thm}[{cf \cite[Sec. 2.3.]{MN}}]\label{wkeq}
Let $G$ be countable discrete amenable group, and let $(J, \theta)\xrightarrow{l_F} (F, \theta)\xrightarrow{\pi_F} (F/J, \bar{\theta})$ be an extension of nuclear $G$-algebras with $G$-equivariant $*$-homomorphisms.
Then, $KK^G(j(F))$ is a $KK^G$-equivalence.
In particular,
$J\to F\to F/J$ extends to an exact triangle
\[S(F/J)\xrightarrow{KK^G(\iota(F))\hat{\otimes}KK^G(j(F))^{-1}} J\xrightarrow{l_F} F\xrightarrow{\pi_F} F/J\] in $KK^G$.
\end{thm}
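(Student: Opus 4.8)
The plan is to avoid splitting the original extension equivariantly (which fails because $G$ is not compact) and instead to reduce to the case of compact subgroups, where averaging is available, and then to globalize using the localisation machinery behind the Baum--Connes conjecture. Throughout I would work with the mapping cone triangle rather than with an explicit section.

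First I would settle the compact case. For a compact subgroup $H\le G$ (so $H$ is finite in our discrete setting), restriction gives an extension of nuclear $H$-algebras $\operatorname{Res}_H J\to\operatorname{Res}_H F\to\operatorname{Res}_H(F/J)$; by Choi--Effros there is a completely positive contractive section $\rho$, and the averaged section $\tilde{\rho}(\cdot)=\int_H\theta_{h^{-1}}(\rho(\bar{\theta}_h(\cdot)))dh$ is $H$-equivariant. With an equivariant semisplitting available, the functors $KK^H(D,-)$ are half-exact on this extension, so Schochet's axiomatic mapping-cone result \cite[p186, Theorem 5.4.]{Schochet} supplies a natural long exact sequence. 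Comparing it with the sequence coming from the cone triangle $S(\operatorname{Res}_H(F/J))\xrightarrow{\iota}\operatorname{Cone}(\operatorname{Res}_H\pi_F)\xrightarrow{ev_1}\operatorname{Res}_H F\xrightarrow{\pi_F}\operatorname{Res}_H(F/J)$ and invoking the five lemma shows that $\operatorname{Res}_H(j(F))=j(\operatorname{Res}_H F)$ induces isomorphisms on every $KK^H(D,-)$, hence is a $KK^H$-equivalence.

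Next I would globalize. The role of \cite[Theorem 8.5.]{MN} together with the Baum--Connes conjecture for the amenable (hence a-T-menable) group $G$ \cite{HigKas} is that $KK^G$ is controlled by the restriction functors to compact subgroups: the localizing subcategory generated by compactly induced objects exhausts the relevant part of $KK^G$, so a morphism whose restriction to every compact subgroup is a $KK^H$-equivalence is itself a $KK^G$-equivalence. Applying this to $j(F)$ and the computation of the previous step gives that $KK^G(j(F))$ is a $KK^G$-equivalence. Finally, the exact triangle is then immediate. The equivariant cone triangle is
\[S(F/J)\xrightarrow{KK^G(\iota(F))}\operatorname{Cone}(\pi_F)\xrightarrow{KK^G(ev_1)}F\xrightarrow{KK^G(\pi_F)}F/J,\]
and replacing $\operatorname{Cone}(\pi_F)$ by $J$ through the equivalence $KK^G(j(F))$ yields an isomorphic triangle whose connecting map is $KK^G(\iota(F))\hat{\otimes}KK^G(j(F))^{-1}$; since $ev_1\circ j(F)=l_F$ as $G$-equivariant $*$-homomorphisms, the map $J\to F$ becomes exactly $KK^G(l_F)$, giving the asserted triangle
\[S(F/J)\xrightarrow{KK^G(\iota(F))\hat{\otimes}KK^G(j(F))^{-1}} J\xrightarrow{l_F} F\xrightarrow{\pi_F} F/J.\]

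I expect the main obstacle to be the globalization step: making precise, in the language of the cited Meyer--Nest localisation theorem, that for amenable $G$ a morphism is a $KK^G$-equivalence as soon as all of its restrictions to compact subgroups are equivalences, i.e.\ that the complementary, Baum--Connes contractible, part contributes nothing here. The compact case is routine once the averaged section is produced, and the triangle bookkeeping is purely formal; it is the passage from ``equivalence on every compact subgroup'' to ``$KK^G$-equivalence'' that genuinely uses amenability and must be aligned carefully with the statement of \cite[Theorem 8.5.]{MN}.
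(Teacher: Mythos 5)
Your proposal follows essentially the same route as the paper: restrict to compact (finite) subgroups, use the Choi--Effros section averaged over $H$ to make Schochet's theorem applicable and conclude that $j(F)$ is a weak equivalence, then invoke \cite[Theorem 8.5.]{MN} together with the Baum--Connes theorem of \cite{HigKas} for the amenable group $G$ to upgrade this to a $KK^G$-equivalence, after which the triangle identification is formal. This matches the paper's argument step for step, so there is nothing to add.
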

\begin{proof}
    By the above observation and \cite[Theorem 5.4.]{Schochet},
    the restriction to any compact (finite) subgroup $H\subset G$ gives $KK^H(j(F))\in KK^H(\operatorname{Res}_G^H J, \operatorname{Res}_G^H\operatorname{Cone}(\pi_F))^{-1}$ (i.e., $j(F)$ is a weak equivalence).
    Thus, \cite[Theorem 8.5.]{MN} shows $KK^G(j(F))\in KK^G(J, \operatorname{Cone}(\pi_F))^{-1}$.
\end{proof}
For an exact triangle $SA\to C\to B\xrightarrow{\eta} A$,
the triangle $SB\xrightarrow{-S\eta}SA\to C\to B$ is also an exact triangle.
We will use the following weak functoriality of the exact triangles.
\begin{lem}[{cf \cite[Sec. 2]{MN}}]\label{benri}
    For the commutative diagram of the following exact triangles
    \[\xymatrix{
    (SA, S\alpha)\ar[r]\ar[d]^{S\xi_A}&(C, \gamma)\ar[r]\ar@{-->}[d]&(B, \beta)\ar[d]^{\xi_B}\ar[r]&(A, \alpha)\ar[d]^{\xi_A}\\
    (SA', S\alpha')\ar[r]&(C', \gamma')\ar[r]&(B', \beta')\ar[r]&(A', \alpha'),
    }\]
    there exists a dotted arrow $\xi_C\in KK^G((C, \gamma), (C', \gamma'))$ making the diagram commute.
    Furthermore,
    if $\xi_A, \xi_B$ are $KK^G$-equivalences, then so is $\xi_C$.
\end{lem}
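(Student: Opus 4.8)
The plan is to read both rows as distinguished triangles in the triangulated category $KK^G$ constructed by Meyer--Nest \cite{MN}, and to obtain the two assertions as the combination of the fill-in axiom (TR3) with the five lemma. Write the two triangles as $SA\xrightarrow{a}C\xrightarrow{b}B\xrightarrow{\eta}A$ and $SA'\xrightarrow{a'}C'\xrightarrow{b'}B'\xrightarrow{\eta'}A'$, so that the standing hypothesis is that the rightmost square commutes, $\eta'\circ\xi_B=\xi_A\circ\eta$.

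First I would produce $\xi_C$ by TR3. Since $\xi_A,\xi_B$ are prescribed on the last two terms rather than on the first two, I rotate the triangles so that $\eta$ sits in the first slot. Using the rotation recalled just before the lemma (and rotating once more), the sequence $B\xrightarrow{\eta}A\to SC\to SB$ is a distinguished triangle, with connecting maps $\pm Sa,\ \pm Sb$ according to the sign convention, and similarly for the primed data. The commuting square $\eta'\circ\xi_B=\xi_A\circ\eta$ is exactly the input of TR3 for these two rotated triangles, with $\xi_B$ on $B\to B'$ and $\xi_A$ on $A\to A'$; TR3 then supplies a morphism $\psi\colon SC\to SC'$ completing the data to a morphism of triangles. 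Desuspending $\psi$ via Bott periodicity $S^2\simeq\id$ yields $\xi_C\in KK^G((C,\gamma),(C',\gamma'))$, and unwinding the two rotations turns the triangle-morphism identities for $\psi$ back into the commutativity of the remaining squares of the original ladder. Only existence is claimed, which is appropriate since TR3 fill-ins are in general non-unique; this is why the arrow is drawn dotted.

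For the ``furthermore'' part I would apply the homological functor $KK^G(D,-)$, for an arbitrary object $D$, to both triangles. Each triangle is carried to a long exact sequence, and the morphism of triangles just constructed induces a commutative ladder between them. In the column of $C$ the four neighbouring vertical maps are induced by $S\xi_B,\ S\xi_A,\ \xi_B,\ \xi_A$; since $\xi_A,\xi_B$ are $KK^G$-equivalences and suspension preserves equivalences, all four are isomorphisms. The five lemma then forces $(\xi_C)_*\colon KK^G(D,C)\to KK^G(D,C')$ to be an isomorphism. As $D$ is arbitrary, the Yoneda lemma in the additive category $KK^G$ shows that $\xi_C$ is invertible there, i.e. a $KK^G$-equivalence.

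The only genuine work is bookkeeping: getting the rotations and signs right so that the TR3 output, after desuspension, lands as a fill-in of the original ladder and not of a rotated one. Once the triangulated structure of \cite{MN} is in hand, both the existence statement (TR3) and the equivalence statement (five lemma together with Yoneda) are formal. The main subtlety to keep in mind throughout is the non-uniqueness of the TR3 fill-in, so that no naturality of $\xi_C$ is asserted beyond its existence.
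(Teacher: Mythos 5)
Your proposal is correct and is precisely the argument the paper intends: the paper offers no explicit proof, deferring instead to the triangulated structure of $KK^G$ established in \cite[Sec. 2]{MN}, and your combination of rotation, the TR3 fill-in axiom, Bott-periodicity desuspension, and the five lemma with Yoneda is the standard way that structure delivers both the existence of $\xi_C$ and its invertibility. The sign bookkeeping you flag is harmless since, after rotation, the corresponding horizontal arrows in both rows carry the same sign, which cancels in each square of the ladder.
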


By \cite[19.2.6.]{B} (cf \cite[Sec. 4.1, Remark 4.1, Appendix]{ps}),
there exists a natural isomorphism $\eta_{A, B} : KK(A, B)\to \operatorname{Ext}^1(A, SB\otimes \K)$ satisfying
\[\eta_{A, A}(I_{A})=i_{(1_{\M(A)}\otimes e)}^*([SA\otimes\K\to C_0(0, 1]\otimes A\otimes\K\to A\otimes\K])\in\operatorname{Ext}^1(A, SA\otimes\K),\]
where $e\in\K$ is a minimal projection and $i_{(1_{\M(A)}\otimes e)} : A\ni a\mapsto a\otimes e\in A\otimes\K$ is the corner embedding of the projection $1_{\M(A)}\otimes e\in\M(A\otimes \K)$.
\begin{lem}[{cf \cite[Sec. 2.3]{MN}}]\label{ade}
Let $C$ be a unital separable nuclear C*-algebra, and $B$ be a separable nuclear C*-algebra.
Recall the embedding $i_{(1_{\M(B)}\otimes e)} : B\ni x\mapsto x\otimes e\in B\otimes\K$.
    For any $\xi\in KK(SC, B)$, there exists a non-unital essential extension $B\otimes \K\to E\xrightarrow{\pi_E}C$ such that
    \[SC\xrightarrow{\xi\hat{\otimes}KK(i_{(1_{\M(B)}\otimes e)})}B\otimes \K\to E\xrightarrow{\pi_E}C\]
    is an exact triangle.
    Furthermore, if $B=\mathbb{C}$, the extension $E$ is chosen to be unital (i.e., $1_{\M(\K)}=1_E\in E$).
\end{lem}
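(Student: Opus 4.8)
The plan is to translate $\xi$ into an extension via the natural isomorphism $\eta$, then to upgrade the resulting Busby invariant to an essential, non-unital (resp. unital when $B=\C$) one by an absorption argument, checking at each step that the induced connecting map is unchanged.

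First I would pass from $KK$ to $\operatorname{Ext}$. Using \cite[19.2.6.]{B} in its degree-one form, equivalently the suspended companion of $\eta_{A,B}$ coming from the chain $\operatorname{Ext}^1(C, B\otimes\K)\cong KK^1(C, B)\cong KK(SC, B)$, the class $\xi\in KK(SC, B)$ determines an $\operatorname{Ext}$-class, hence a Busby invariant $\tau_0 : C\to\Q(B\otimes\K)$ and an extension $B\otimes\K\to E_0\xrightarrow{\pi_{E_0}}C$. The normalization of $\eta_{A,A}(I_A)$ as the pull-back $i_{(1_{\M(A)}\otimes e)}^*$ of the cone extension is exactly what produces the corner-embedding factor: by naturality of $\eta$, the connecting map attached to $E_0$ in Sec. \ref{nati} (namely $KK(\iota(E_0))\hat{\otimes}KK(j(E_0))^{-1}$) equals $\xi\hat{\otimes}KK(i_{(1_{\M(B)}\otimes e)})$, where $KK(i_{(1_{\M(B)}\otimes e)})$ implements the stability isomorphism $KK(SC, B)\cong KK(SC, B\otimes\K)$.

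Next I would arrange essentiality and non-unitality without moving the class. Since $C$ is nuclear every extension is semisplit, so $\operatorname{Ext}^1(C, B\otimes\K)$ is a group in which split (trivial) extensions are the zero element. I would take a faithful non-degenerate representation supported in a proper corner of $\M(B\otimes\K)$, giving a trivial, injective, non-unital Busby invariant $\tau_1 : C\to\Q(B\otimes\K)$, and form $\tau:=\tau_0\oplus\tau_1$ (using stability of $B\otimes\K$). Then $\tau$ is injective and non-unital, while $[\tau]=[\tau_0]+[\tau_1]=[\tau_0]$, so the associated extension $E$ is essential and non-unital and represents the same $\operatorname{Ext}$-class as $E_0$. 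Because $\tau$ and $\tau_0$ define equal classes, the corresponding exact triangles are isomorphic by the weak functoriality of Lem. \ref{benri}, which transports the identification of the connecting map from $E_0$ to $E$; hence the connecting map of $E$ is still $\xi\hat{\otimes}KK(i_{(1_{\M(B)}\otimes e)})$.

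Finally, for $B=\C$ I would instead absorb a unital trivial extension: a unital faithful essential representation of $C$ on $H$, i.e.\ a unital embedding $C\hookrightarrow\M(\K)$, composed with $q_{\C}$ gives a unital trivial extension into the Calkin algebra which is absorbing by Voiculescu's theorem. Adding it to $\tau_0$ makes the Busby invariant unital and injective, so $1_{\M(\K)}=1_E\in E$, while preserving the class and thus the connecting map. The step I expect to be the main obstacle is the bookkeeping in the first paragraph: certifying that the normalization of $\eta$ yields precisely the factor $KK(i_{(1_{\M(B)}\otimes e)})$ (and not some other stability identification or an unwanted sign), and then verifying that the absorption step leaves this connecting map untouched via Lem. \ref{benri}. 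For $B=\C$ the extra subtlety is that a \emph{unital} absorbing trivial extension exists only because Voiculescu's theorem applies to the Calkin algebra $\Q(\K)$, which is exactly why the unital conclusion is special to $B=\C$.
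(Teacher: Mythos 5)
Your first two paragraphs follow essentially the same route as the paper's own proof: the paper also converts $\xi$ into an extension using the isomorphism $\eta$ together with the suspension isomorphism $\operatorname{Ext}^1(C, B\otimes\K)\cong\operatorname{Ext}^1(SC, SB\otimes\K)$, identifies the connecting map $KK(\iota(E))\hat{\otimes}KK(j(E))^{-1}$ with $\xi\hat{\otimes}KK(i_{(1_{\M(B)}\otimes e)})$ by precisely the bookkeeping you defer to ``naturality'' (in the paper this is a four-row commutative diagram relating $KK(-,B)$ to $\operatorname{Ext}^1(-,SB\otimes\K)$ under $\iota(E)^*$, $j(E)^*$, $i_{(1_{\M(B)}\otimes e)}^*$, followed by a diagram chase), and arranges non-unitality for general $B$ exactly as you do, by adding a trivial extension to the Busby invariant. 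Up to there your proposal is correct and is the paper's argument.

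The $B=\C$ step, however, has a genuine gap. Direct-summing a unital trivial extension $\sigma$ onto $\tau_0$ can never make the Busby invariant unital: $(\tau_0\oplus\sigma)(1_C)$ is the projection $\tau_0(1_C)\oplus 1$, which equals $1_{\Q(\K)}$ only if $\tau_0$ was unital to begin with. Voiculescu's theorem gives $\tau_0\oplus\sigma\sim\tau_0$, i.e.\ it preserves the class, but absorption says nothing about unitality. Moreover, the reason the unital conclusion is special to $B=\C$ is not that Voiculescu's theorem applies to $\Q(\K)$ --- absorption theorems hold for general stable nuclear coefficients (Kasparov) --- but that the index obstruction vanishes: by the six-term sequence one has $K_0(\Q(B\otimes\K))\cong K_1(B)$, and a class can be represented by a unital extension only when $[\tau_0(1_C)]_0=[1]_0$ in that group; this is exactly the paper's remark following the lemma, and the paper settles the $B=\C$ case by citing \cite[Theorem 2.3.]{Ska} (cf.\ \cite[Theorem 4.4.]{ps}). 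A correct replacement for your step: since $\Q(\K)$ is simple purely infinite and $K_0(\Q(\K))\cong K_1(\C)=0$, the nonzero projection $\tau_0(1_C)$ is Murray--von Neumann equivalent to $1$, so there is $w\in\Q(\K)$ with $w^*w=\tau_0(1_C)$ and $ww^*=1$; then $w\tau_0(\cdot)w^*$ is a unital injective Busby invariant, and conjugation by an isometry does not change the class in $\operatorname{Ext}^1(C,\K)$ (it amounts to adding a degenerate extension), so the identification of the connecting map persists.
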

\begin{proof}
By the suspension isomorphism $\operatorname{Ext}^1(C, B\otimes\K)\cong \operatorname{Ext}^1(SC, SB\otimes\K)$,
there exists a non-unital essential extension $B\otimes\K\to E\xrightarrow{\pi_E} C$ satisfying
\[\eta_{SC, B}(\xi)=[SB\otimes \K\to SE\xrightarrow{S\pi_E}SC]\in \operatorname{Ext}^1(SC, SB\otimes\K).\]
If $B=\mathbb{C}$, \cite[Theorem 2.3.]{Ska} (cf \cite[Theorem 4.4.]{ps}) allows us to assume that $E$ is unital.
For general $B$, we may assume $E$ is non-unital by adding an appropriate trivial extension to the Busby invariant of $E$.

For the exact triangle
\[SC\xrightarrow{\iota(E)}\operatorname{Cone}(\pi_E)\xrightarrow{ev_1} E\xrightarrow{\pi_E}C,\]
we consider the following commutative diagram
\[\xymatrix{
KK(SC, B)\ar[r]^{\eta_{SC, B}}&\operatorname{Ext}^1(SC, SB\otimes\K)\\
KK(\operatorname{Cone}(\pi_E), B)\ar[u]^{\iota(E)^*}\ar@<+0.8 ex>[r]^{\eta_{\operatorname{Cone}, B}}\ar[d]^{j(E)^*}&\operatorname{Ext}^1(\operatorname{Cone}(\pi_E), SB\otimes\K)\ar[u]^{\iota(E)^*}\ar[d]^{j(E)^*}\\
KK(B\otimes\K, B)\ar[d]^{i_{(1_{\M(B)}\otimes e)}^*}\ar[r]^{\eta_{B\otimes \K, B}}&\operatorname{Ext}^1(B\otimes\K, SB\otimes\K)\ar[d]^{i_{(1_{\M(B)}\otimes e)}^*}\\
KK(B, B)\ar[r]^{\eta_{B, B}}&\operatorname{Ext}^1(B, SB\otimes\K).
}\]
Since
\[\iota(E)^*([SB\otimes\K\to C_0(0, 1]\otimes E\to \operatorname{Cone}(\pi_E)])=[SB\otimes\K\to SE\to SC],\]
\[j(E)^*([SB\otimes\K\to C_0(0, 1]\otimes E\to \operatorname{Cone}(\pi_E)])=[SB\otimes\K\to C_0(0, 1]\otimes B\otimes\K\to B\otimes\K],\]
\[\eta_{B, B}(I_B)=i_{(1_{\M(B)}\otimes e)}^*([SB\otimes\K\to C_0(0, 1]\otimes B\otimes\K\to B\otimes\K]),\]
diagram chasing shows
\begin{align*}
    \xi\hat{\otimes}KK(i_{(1_{\M(B)}\otimes e)})=&\iota(E)^*({j(E)^*}^{-1}({i_{(1_{\M(B)}\otimes e)}^*}^{-1}(I_B)))\hat{\otimes}KK(i_{(1_{\M(B)}\otimes e)})\\
    =&KK(\iota(E))\hat{\otimes}\left(({j(E)^*}^{-1}({i_{(1_{\M(B)}\otimes e)}^*}^{-1}(I_B)))\hat{\otimes} KK(i_{(1_{\M(B)}\otimes e)})\right),
\end{align*}
and
\begin{align*}&KK(j(E))\hat{\otimes}({j(E)^*}^{-1}({i_{(1_{\M(B)}\otimes e)}^*}^{-1}(I_B)))\hat{\otimes} KK(i_{(1_{\M(B)}\otimes e)})\\
=&KK(j(E))\hat{\otimes} KK(j(E))^{-1}\hat{\otimes}KK(i_{(1_{\M(B)}\otimes e)})^{-1}\hat{\otimes}KK(i_{(1_{\M(B)}\otimes e)})\\
=&I_{B\otimes\K}.\end{align*}
This implies \[KK(j(E))^{-1}=({j(E)^*}^{-1}({i_{(1_{\M(B)}\otimes e)}^*}^{-1}(I_B)))\hat{\otimes} KK(i_{(1_{\M(B)}\otimes e)}),\]
\[\xi\hat{\otimes} KK(i_{(1_{\M(B)}\otimes e)})=KK(\iota(E))\hat{\otimes}KK(j(E))^{-1},\]
and we have the following isomorphism of triangles
\[\xymatrix{
SC\ar[r]_{\xi\hat{\otimes} KK(i_{1\otimes e})}\ar@{=}[d]&B\otimes\K\ar[d]^{KK(j(E))}\ar[r]&E\ar@{=}[d]\ar[r]&C\ar@{=}[d]\\
SC\ar[r]_{KK(\iota(E))}&\operatorname{Cone}(\pi_E)\ar[r]&E\ar[r]&C.
}\]
\end{proof}
Note that, for general $B$, every element in $\operatorname{Ext}^1(A, B\otimes \K)$ is not necessary given by the unital extension because of the obstruction $K_1(B)$ (see the 6-term exact sequence in \cite{Ska}, \cite[Theorem 4.4.]{ps}).


\section{Idea of Construction}\label{keycon}
In this section,
we explain our construction in the simplest setting as in the following theorem.
\begin{thm}\label{Mat0}
    For a unital Kirchberg algebra $A$, there exists an aperiodic automorphism $\gamma\in \operatorname{Aut}(A)$ satisfying $A^\gamma=\C$.
\end{thm}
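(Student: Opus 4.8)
The plan is to realize $A$, up to the isomorphism furnished by the Kirchberg--Phillips theorem (Theorem \ref{KP}), as a Cuntz--Pimsner algebra $\mathcal{O}_X$ carrying a quasi-free automorphism $\gamma$, and then to arrange the coefficient data so that $\gamma$ is ergodic. Concretely, I would first feed the $K$-theory of $A$ into the extension machinery: choosing a suitable class $\xi\in KK(SA, \C)$ and applying Lemma \ref{ade} with $C=A$ and $B=\C$ produces a unital essential extension $\K\to E\xrightarrow{\pi_E}A$ whose associated exact triangle encodes $KK(A)$ in terms of $\K$ and the connecting map. This extension is the input that will let the Pimsner six-term sequence recover the $K$-theory of $A$ while keeping the coefficient algebra of the bimodule as small and as ``mixable'' as possible.

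Next I would build a Hilbert bimodule $X$ from this extension together with an auxiliary shift. The one-particle module should carry a unitary $V$ modelled on the two-sided shift on $l^2(\mathbb{Z})$ (as in the quasi-free action set up in the preliminaries), while the coefficient algebra should be equipped with an automorphism $\beta$ whose fixed-point set is as small as possible---ideally an $\ad u$ with $u$ of purely continuous spectrum, so that no nonzero coefficient element is $\beta$-fixed. Forming the Fock space $\mathcal{F}(X)$ and the creation operators $T_\zeta$, I define $\gamma$ on $\mathcal{O}_X$ by $\gamma(T_\zeta)=T_{V\zeta}$ together with $\beta$ on the coefficients. Since $V$ is a two-sided shift, on every tensor power $X^{\otimes k}$ the induced operator has no invariant matrix coefficients that survive into $\mathcal{O}_X$ (the would-be invariants are non-convergent infinite sums such as $\sum_n T_{\delta_{n+1}}T_{\delta_n}^*$), so the only $\gamma$-fixed elements lie in degree zero and there reduce to scalars; this is what should yield $\mathcal{O}_X^\gamma=\C$, with the conditional expectation coming from the vacuum.

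It remains to identify $\mathcal{O}_X$ with $A$ and to verify aperiodicity. For the identification I would compute $K_*(\mathcal{O}_X)$ from the Pimsner exact sequence, feeding in the extension triangle of Lemma \ref{ade} so that the map $1-[X]$ reproduces the connecting map of $A$; matching the class of the unit and invoking Theorem \ref{KP} then gives an isomorphism $\mathcal{O}_X\cong A$ transporting $\gamma$ to the desired automorphism. For aperiodicity I would argue that each $\gamma^n$ with $n\neq 0$ is outer: the possibility $\gamma^n=\id$ is excluded because it would make a finite cyclic group act ergodically on a unital Kirchberg algebra, contradicting the trace obstruction of \cite[Theorem 1]{EOA}, while $\gamma^n=\ad u$ with $u\notin\C$ is ruled out by the genuine shift structure of $V$, since an inner automorphism cannot implement a nontrivial translation of the creation operators.

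The step I expect to be the main obstacle is the fixed-point computation $\mathcal{O}_X^\gamma=\C$, that is, reconciling the rich $K$-theory of $A$ with a trivial fixed-point algebra. The tension is that a quasi-free $\gamma$ preserves the Fock grading, so its fixed points automatically contain the $\beta$-fixed points of the degree-zero coefficient part; forcing these down to $\C$ requires the coefficient automorphism to have essentially no fixed points, which is delicate to arrange simultaneously with the demand that the Pimsner sequence deliver exactly $KK(A)$. Controlling the passage from the Toeplitz algebra to the Cuntz--Pimsner quotient---ensuring that imposing the covariance relation creates no new $\gamma$-invariants---is the crux of the argument.
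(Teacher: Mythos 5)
Your proposal reproduces the outer skeleton of the paper's argument (an extension fed into Lemma \ref{ade}, a Pimsner bimodule with an $l^2(\mathbb{Z})$-shift, Kirchberg--Phillips to identify the result with $A$), but it diverges exactly at the step you yourself flag as the main obstacle, and there the plan as written does not work. You try to force $\mathcal{O}_X^\gamma=\C$ by equipping the coefficient algebra with an automorphism $\beta$ having ``essentially no fixed points'', e.g.\ $\beta=\ad u$ with $u$ of continuous spectrum. This is internally inconsistent: $\ad u$ fixes $C^*(u,1)$ (and any automorphism of a unital algebra fixes the unit), so the degree-zero part of the fixed-point algebra can never be squeezed down to $\C$ this way; and, as you note, demanding trivial coefficient fixed points while the coefficients carry all of $K_*(A)$ is a tension you cannot resolve. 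The paper's key idea is the opposite of yours: the quasi-free automorphism $\Gamma$ acts \emph{trivially} on the coefficient algebra $E$, one proves $(\mathcal{T}_X)^{\Gamma^N}=E$ for every $N\neq 0$ (Lemma \ref{Mats2}, a Fock-space separation argument), and ergodicity is then manufactured by \emph{compressing by a projection in the ideal}: for a minimal projection $e\in\K\triangleleft E$ one gets $A^\gamma=(e\mathcal{T}_Xe)^{\Gamma}\subset eEe=\C e$. No fixed-point-free dynamics on the coefficients is needed; the corner does the work. This corner trick is the missing idea, and it also powers the outerness proof: if $\gamma^N=\ad V$ then $\Gamma^N(V)=V$, so $V\in(\mathcal{T}_X)^{\Gamma^N}\cap e\mathcal{T}_Xe=eEe=\C$, forcing $\gamma^N=\id$, which an explicit computation on creation operators refutes. (Your exclusion of $\gamma^n=\id$ via the finite-group obstruction of \cite[Theorem 1]{EOA} is fine, but the genuinely inner case is exactly what you leave to ``the shift cannot be inner'', which is not an argument.)

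There is a second, quieter gap in your K-theoretic bookkeeping. You apply Lemma \ref{ade} with $C=A$, $B=\C$, getting an extension $\K\to E\xrightarrow{\pi_E}A$. But then, for the bimodule $X=H\otimes l^2(\mathbb{Z})\otimes E$, the Toeplitz--Pimsner algebra is $KK$-equivalent to the \emph{coefficient} algebra $E$, not to the quotient $A$, and in your triangle $SA\to\K\to E\to A$ the class of a minimal projection $e\in\K$ maps to $0$ in $K_0(A)$ by exactness, so the unit of the corner $e\mathcal{T}_Xe$ can never be matched with $[1_A]_0$ through this extension. The paper instead uses \cite[Lemma 2.2]{fkk} to replace $S\operatorname{Cone}(1_A)$ by a unital separable nuclear algebra $C$ and builds the extension with \emph{quotient} $C$, i.e.\ $\K\to E\to C$, chosen so that its triangle is isomorphic to the rotated mapping-cone triangle $S\C\to SA\to\operatorname{Cone}(1_A)\to\C$ of the unit map. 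This is precisely what makes $E\sim_{KK}A$ with $\C e\subset E$ corresponding to $\C 1_A\subset A$, so that Lemma \ref{mori}, Lemma \ref{benri} and Theorem \ref{KP} yield $A\cong e(\mathcal{T}_X)e$ compatibly with the automorphism. Without this rearrangement of which algebra sits in which slot of the extension, the final invocation of Kirchberg--Phillips in your plan has nothing to match the unit against.
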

KK-theoretic data of $A$ is given by the exact triangle
\[SA\to \operatorname{Cone}(1_A)\xrightarrow{ev_1}\C\xrightarrow{1_A}A.\]
Since $A$ is separable nuclear,
the mapping cone $\operatorname{Cone}(1_A)$ is also separable nuclear,
and \cite[Lemma 2.2.]{fkk} shows that there exist a separable nuclear unital C*-algebra $C$ and an embedding $f : S\operatorname{Cone}(1_A)\hookrightarrow C$ providing a KK-equivalence $KK(f)\in KK(S\operatorname{Cone}(1_A), C)^{-1}$.
The element
\[\xi:=KK(Sf)^{-1}\hat{\otimes} (b^{-1}\otimes I_{\operatorname{Cone}(1_A)})\hat{\otimes} KK(ev_1)\hat{\otimes} KK(i_e)\in KK(SC, \K)\]
makes the following diagram commute
\[\xymatrix{
\operatorname{Cone}(1_A)\ar[r]^{KK(ev_1)}\ar[d]^{b\otimes I_{\operatorname{Cone}(1_A)}}&\C\ar[d]^{KK(i_e)}\\
S(S\operatorname{Cone}(1_A))\ar[d]^{KK(Sf)}&\K\\
SC\ar@{-->}[ur]_{\xi}&
},\]
where $b\in KK(\C, S^2)^{-1}$ is the Bott element.
By Lemma \ref{ade},
we have the following commutative diagram of the exact triangles
\[\xymatrix{
S\C\ar@<+0.8 ex>[r]^{-KK(S1_A)}&SA\ar@<+0.8 ex>[r]&\operatorname{Cone}(1_A)\ar@<+0.8 ex>[r]^{\quad\quad KK(ev_1)}\ar[d]^{\sim_{KK}}&\C\ar[d]^{KK(i_{e})}&&\\
&&SC\ar@{-->}[r]^{\xi}&\K\ar[r]^{l_E}&E\ar[r]^{\pi_E}&C,
}\]
where $E$ is a unital essential extension.
Fix a non-degenerate representation $E\subset \mathbb{B}(H)$.
Let $X:=H\otimes_\C l^2(\mathbb{Z})\otimes_\C E$ be a Hilbert $E-E$-bimodule (see \cite{Kumjian} and note that $E\otimes 1_{l^2(\mathbb{Z})}\cap\K(H\otimes l^2(\mathbb{Z}))=\{0\}$).
Let $\mathcal{T}_X$ be the Toeplitz--Pimsner algebra generated by the creation operators
\[T_{\zeta\otimes\delta_n\otimes x}, \quad \zeta\in H,\;\; n\in\mathbb{Z},\;\; x\in E.\]
By \cite[Theorem 3.1.]{Kumjian},
$\mathcal{T}_X$ is a unital Kirchberg algebra for which the natural embedding $E\hookrightarrow \mathcal{T}_X$ is a KK-equivalence.
Applying Lemma \ref{mori}, Lemma \ref{benri} and Theorem \ref{KP} for the diagram
\[\xymatrix{
S\C\ar[r]^{-KK(S1_A)}\ar[d]^{KK(Si_e)}&SA\ar[r]&\operatorname{Cone}(1_A)\ar[d]^{\sim_{KK}}\ar[r]&\C\ar[d]^{KK(i_{e})}\\
S\K\ar[r]^{-KK(Sl_E)}&SE\ar@<+0.8 ex>@{^{(}->}[d]^{\sim_{KK}}\ar[r]^{-KK(S\pi_E)}&SC\ar@{-->}[r]^{\xi}&\K\\
S\C\ar[u]^{KK(Si_e)}\ar@<+0.8 ex>[r]^{-KK(S\C e\subset S\mathcal{T}_X)\;\;}&S\mathcal{T}_X&&\\
S\C\ar@{=}[u]\ar[r]_{-KK(1_{e(\mathcal{T}_X)e})}&Se(\mathcal{T}_X)e\ar[u]^{KK(Si_e)}&&
}\]
we have $A\cong e(\mathcal{T}_X)e$.
\begin{lem}\label{Mats2}
    Let $\Gamma\in \operatorname{Aut}(\mathcal{T}_X)$ be a quasi-free automorphism given by
    \[\Gamma (T_{\zeta\otimes\delta_n\otimes x}):=T_{\zeta\otimes \delta_{n+1}\otimes x},\quad \text{for}\;\; \zeta\in H,\; n\in\mathbb{Z},\; x\in E.\]
    Then, we have $(\mathcal{T}_X)^\Gamma=(\mathcal{T}_X)^{\Gamma^N}=E$ for $N\in\mathbb{Z}\backslash\{0\}$.
\end{lem}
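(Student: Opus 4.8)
The plan is to realize $\mathcal{T}_X$ faithfully on a genuine Hilbert space, implement $\Gamma$ there by a shift unitary, and show that iterating $\Gamma^N$ drives every ``off-diagonal'' monomial weakly to zero while leaving $\varphi(E)$ fixed; since a $\Gamma^N$-fixed element yields a \emph{constant} sequence, it must coincide with its weak limit, and that limit will be forced into $\varphi(E)$. I would first record the easy inclusion. The automorphism $\Gamma$ is the quasi-free automorphism $\operatorname{Ad}\mathcal{F}(u)$ attached to the bimodule unitary $u:=\id_H\otimes S\otimes\id_E\in\mathcal{L}_E(X)$, where $S\delta_n=\delta_{n+1}$ and $\mathcal{F}(u):=\bigoplus_{k\ge 0}u^{\otimes k}$ (with $u^{\otimes 0}=\id_E$). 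Because $u$ commutes with the left action $\varphi$, second quantization gives $\Gamma(\varphi(x))=\mathcal{F}(u)\varphi(x)\mathcal{F}(u)^*=\varphi(x)$, so $\varphi(E)\subseteq(\mathcal{T}_X)^{\Gamma}\subseteq(\mathcal{T}_X)^{\Gamma^N}$ for every $N$; only the reverse inclusion requires work.

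For the reverse inclusion I would use the given faithful nondegenerate representation $E\subseteq\mathbb{B}(H)$ to form the Hilbert space $\mathcal{F}:=\mathcal{F}(X)\otimes_E H$, on which $\mathcal{T}_X\subseteq\mathcal{L}_E(\mathcal{F}(X))$ acts faithfully (the amplification of an adjointable operator along a faithful $H$ is faithful). Here $\Gamma^N=\operatorname{Ad}\mathcal{U}_N$ with $\mathcal{U}_N:=\mathcal{F}(u^N)\otimes\id_H$. Let $\Phi_0:\mathcal{T}_X\to E$ be the vacuum conditional expectation (compression to $E\Omega_X$) and set $\Phi:=\varphi\circ\Phi_0:\mathcal{T}_X\to\varphi(E)$. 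The core of the argument is the assertion that $\Gamma^{Nm}(a)\to\Phi(a)$ in the weak operator topology of $\mathbb{B}(\mathcal{F})$ as $m\to\infty$, for every $a\in\mathcal{T}_X$. Granting this, if $\Gamma^N(a)=a$ then $\{\Gamma^{Nm}(a)\}_m$ is the constant sequence $a$, whose weak limit is both $a$ and $\Phi(a)$; hence $a=\Phi(a)\in\varphi(E)$. Combined with the easy inclusion and the faithfulness of the unital embedding $\varphi$, this gives $(\mathcal{T}_X)^{\Gamma^N}=\varphi(E)\cong E$ for all $N\neq 0$, in particular for $N=1$.

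To prove the weak convergence it suffices, by density of the spanning monomials together with a routine $\varepsilon/3$ estimate using $\|\Gamma^{Nm}\|=1$ and $\|\Phi\|\le 1$, to treat $a=T_\xi T_\eta^*$ with $\xi\in X^{\otimes k}$, $\eta\in X^{\otimes l}$ elementary tensors of finite $\delta$-support, tested against elementary vectors of finite $\delta$-support (these are total in $\mathcal{F}$). When $(k,l)=(0,0)$ one has $a=\varphi(x)=\Phi(a)$ and there is nothing to prove. When $l\ge 1$, the pairing reads $\langle T^*_{u^{Nm}\xi}\zeta',\,T^*_{u^{Nm}\eta}\zeta\rangle$, and since the $\delta$-supports of $u^{Nm}\eta$ (shifted leg-wise by $Nm$) escape to infinity, $T^*_{u^{Nm}\eta}\zeta=0$ for large $m$; when $l=0$ (so $k\ge 1$) the pairing is $\langle T^*_{u^{Nm}\xi}\zeta',\,\zeta\rangle$ and $T^*_{u^{Nm}\xi}\zeta'=0$ for large $m$ by the same escape-to-infinity. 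In either case the matrix coefficient is eventually $0=\langle\zeta',\Phi(a)\zeta\rangle$, using $\Phi_0(T_\xi T_\eta^*)=0$ for $(k,l)\neq(0,0)$.

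The main obstacle is the purely creative part $l=0$: there $\Gamma^{Nm}(T_\xi)\zeta=(u^{Nm}\xi)\otimes\zeta$ has \emph{constant} norm and does not tend to $0$ in norm, so no strong convergence of $\Gamma^{Nm}$ can be expected. The decisive observation is that weak convergence already suffices, precisely because a $\Gamma^N$-fixed element supplies a constant---hence weakly convergent---sequence whose limit is pinned down by $\Phi$; the bilateral shift $S$ (equivalently, the absence of eigenvectors for $S^N$, $N\ne 0$) serves only to push the $\delta$-supports off to infinity so that the off-diagonal coefficients vanish weakly. This mechanism is insensitive to the sign and size of $N$, which is exactly why the conclusion holds uniformly for all $N\in\mathbb{Z}\setminus\{0\}$.
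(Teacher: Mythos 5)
Your proof is correct, and it runs on the same engine as the paper's: approximate by monomials $T_\xi T_\eta^*$ of finite $\delta$-support, test against finitely supported vectors, and let a large power of the shift push the supports off to infinity so that all off-diagonal pairings vanish. The packaging, however, is genuinely different. The paper stays inside the Hilbert module $\mathcal{F}(X)$ and runs a quantitative estimate: it chooses finitely supported $u,v$ with $\|u\|,\|v\|\le 1$ nearly attaining $\|Y-y\|$ (where $y$ is the degree-zero part of the approximant), uses invariance of $Y$ to replace $Y-y$ by $\Gamma^{mN}(Y-y)$, and concludes $\|Y-y\|<2\epsilon$ directly. You instead localize to the Hilbert space $\mathcal{F}(X)\otimes_E H$ and prove the cleaner, more conceptual statement that $\Gamma^{Nm}\to\Phi$ pointwise in the weak operator topology, where $\Phi$ is the vacuum conditional expectation; a $\Gamma^N$-fixed element then gives a constant sequence and must equal its weak limit $\Phi(a)\in\varphi(E)$. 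What your route buys: the limit is identified as the conditional expectation (which also makes the expectation statement of Theorem 4.7 transparent); you avoid the slightly fussy step of choosing near-norm-attaining finitely supported test vectors; and your remark that weak convergence suffices is exactly the right one, since, as you observe, the creation parts have constant norm and no strong convergence can hold. What it costs: you must invoke faithfulness of the induced representation of $\mathcal{L}_E(\mathcal{F}(X))$ on $\mathcal{F}(X)\otimes_E H$ (true because $E\subset\mathbb{B}(H)$ is faithful and nondegenerate, but an extra ingredient the paper sidesteps, since module-valued inner products against dense sets of vectors already separate adjointable operators). Both arguments are, as they must be, insensitive to the sign and size of $N$.
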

\begin{proof}
Since $\Gamma^N(y)=\Gamma^N(T_{\zeta\otimes\delta_{0}\otimes 1_E}^*T_{\zeta\otimes \delta_{0}\otimes y})=T_{\zeta\otimes\delta_{N}\otimes 1_E}^*T_{\zeta\otimes \delta_{N}\otimes y}=y$ for $y\in E\subset\mathcal{T}_X, \zeta\in H, ||\zeta||=1$,
it is enough to show $(\mathcal{T}_X)^{\Gamma^N}\subset E$.

    Fix $Y\in (\mathcal{T}_X)^{\Gamma^N}$ and $\epsilon>0$.
    There exist a finite subset $F_1\subset \mathbb{Z}$ and finitely many elements $y\in E, \;\mu_i, \nu_i, \xi_i, \eta_i\in \bigcup_{k=1}^\infty (H\otimes l^2(F_1)\otimes E)^{\otimes_E k}$ satisfying
    \[||Y-(y+\sum_i T_{\mu_i}+T_{\nu_i}^*+T_{\xi_i}T_{\eta_i}^*)||<\epsilon.\]
    For the Fock representation $\mathcal{T}_X\subset \mathcal{L}_E(E\Omega_X\oplus\bigoplus_{k=1}^\infty X^{\otimes_E k})$,
    there exist a finite set $F_2\subset\mathbb{Z}$ \[u=u_0\Omega_X+u_1, v=v_0\Omega_X+v_1\in E\Omega_X\oplus \bigoplus_{k=1}^\infty (H\otimes l^2(F_2)\otimes E)^{\otimes_E k}\]satisfying
    \[||u||, ||v||\leq 1, \quad ||Y-y||\approx_\epsilon ||\langle(Y-y)u, v\rangle||,\]
where $a\approx_\epsilon b$ means $|a-b|<\epsilon$.
    Since $F_1, F_2$ are finite, there exists $m\in\mathbb{Z}$ such that $(mN+F_1)\cap F_2=\emptyset$ and one has
    \[\Gamma^{mN}(T_{\mu_i}^*)v_1=\Gamma^{mN}(T_{\nu_i}^*)u_1=\Gamma^{mN}(T_{\xi_i}T_{\eta_i}^*)u_1=0.\]
    Since $\mu_i, \nu_i, \eta_i\in \bigcup_{k\geq1} X^{\otimes_E k}$,
    one has $\Gamma^{mN}(T_{\mu_i}^*)v_0\Omega_X=\Gamma^{mN}(T_{\nu_i}^*)u_0\Omega_X=\Gamma^{mN}(T_{\xi_i}T_{\eta_i}^*)u_0\Omega_X=0$ so that 
    \[\Gamma^{mN}(T_{\mu_i}^*)v=\Gamma^{mN}(T_{\nu_i}^*)u=\Gamma^{mN}(T_{\xi_i}T_{\eta_i}^*)u=0.\]
    
    Direct computation yields
    \begin{align*}
        ||Y-y||&\approx_\epsilon ||\langle(Y-y) u, v\rangle||\\
        &=||\langle\Gamma^{mN}(Y-y) u, v\rangle||\\
        &\approx_\epsilon ||\langle(\sum_i\Gamma^{mN}(T_{\mu_i}+T^*_{\nu_i}+T_{\xi_i}T_{\eta_i}^*))u, v\rangle||=0.
    \end{align*}
    This implies $Y\in E$ (i.e., $(\mathcal{T}_X)^{\Gamma^{N}}=E$).
\end{proof}
We write $\gamma:=\Gamma |_{e(\mathcal{T}_X)e}\in\operatorname{Aut}(A)$.
\begin{lem}\label{Mats3}
    For $N\in\mathbb{Z}\backslash \{0\}$,
    $\gamma^N$ is outer.
\end{lem}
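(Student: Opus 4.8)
The plan is to derive outerness of $\gamma^N$ from the stronger fact that $\gamma^N$ is \emph{ergodic} on $A=e(\mathcal{T}_X)e$, combined with the elementary principle that a non-identity ergodic automorphism of a unital C*-algebra is automatically outer. First I would record this principle in the abstract: if $\alpha\in\operatorname{Aut}(A)$ satisfies $\alpha=\ad u$ for some $u\in U(A)$, then $\alpha(u)=uuu^{*}=u$, so the implementing unitary necessarily lies in the fixed-point algebra $A^{\alpha}$. Consequently, if $A^{\alpha}=\C 1_A$, then $u$ is a scalar and $\alpha=\id$; equivalently, any $\alpha\neq\id$ with $A^{\alpha}=\C 1_A$ is outer. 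The entire proof then reduces to computing $A^{\gamma^N}$ and checking $\gamma^N\neq\id$.

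For the fixed-point computation I would argue as follows. Since $\gamma^{N}=\Gamma^{N}|_{e(\mathcal{T}_X)e}$ and $e\in E=(\mathcal{T}_X)^{\Gamma^{N}}$ is $\Gamma^{N}$-fixed (this is exactly the first line of the proof of Lemma \ref{Mats2}), an element $a=eae\in A$ satisfies $\gamma^{N}(a)=e\,\Gamma^{N}(a)\,e=a$ if and only if $\Gamma^{N}(a)=a$, i.e. if and only if $a\in(\mathcal{T}_X)^{\Gamma^{N}}$. By Lemma \ref{Mats2} this fixed-point algebra equals $E$, so $A^{\gamma^{N}}=A\cap E=eEe$. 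Because the minimal projection $e\in\K=\K(H)$ is rank one in the ambient representation $E\subset\mathbb{B}(H)$ coming from the essential extension $\K\to E\to C$ (here $E\subset\M(\K)=\mathbb{B}(H)$), we get $eEe\subset e\,\mathbb{B}(H)\,e=\C e$, and hence $A^{\gamma^{N}}=\C e=\C 1_A$.

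It remains to check $\gamma^{N}\neq\id$, which is immediate: the vector $\xi_0\otimes\delta_0\otimes e$ (with $e\xi_0=\xi_0$) gives $V:=T_{\xi_0\otimes\delta_0\otimes e}\in e(\mathcal{T}_X)e=A$, since $eV=T_{(e\xi_0)\otimes\delta_0\otimes e}=V$ and $Ve=T_{\xi_0\otimes\delta_0\otimes e^{2}}=V$, while $\gamma^{N}(V)=T_{\xi_0\otimes\delta_{N}\otimes e}$ satisfies $V^{*}\gamma^{N}(V)=\langle\xi_0\otimes\delta_0\otimes e,\,\xi_0\otimes\delta_{N}\otimes e\rangle=0\neq e=V^{*}V$, so $\gamma^{N}(V)\neq V$. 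Applying the principle above to $\alpha=\gamma^{N}$ then yields outerness. I expect the only genuinely non-formal point to be the identity $eEe=\C e$, i.e. that the chosen non-degenerate representation realizes the ideal $\K$ of $E$ as $\K(H)$ with $e$ a rank-one projection; this is what upgrades Lemma \ref{Mats2} from ``$(\mathcal{T}_X)^{\Gamma^{N}}=E$'' to genuine ergodicity of the corner, and it is worth verifying carefully. I would also emphasize that there is no circularity: the computation of $A^{\gamma^{N}}$ uses only Lemma \ref{Mats2} and the rank-one property of $e$, not outerness, so ergodicity is established first and outerness is deduced from it.
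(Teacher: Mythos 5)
Your proposal is correct and is essentially the paper's own argument, just repackaged: the paper assumes $\gamma^N=\ad V$, notes $\Gamma^N(V)=VVV^*=V$ so that $V\in(\mathcal{T}_X)^{\Gamma^N}=E$ by Lemma \ref{Mats2}, concludes $V\in e\mathcal{T}_Xe\cap E=eEe=\C e$ (hence $\gamma^N=\id$), and then contradicts this with the same creation-operator computation you use. Your abstract principle ``ergodic and non-identity implies outer'' is exactly this reasoning factored out, and all the substantive ingredients --- Lemma \ref{Mats2}, the identity $eEe=\C e$ via essentialness of the extension $\K\to E\to C$, and $\gamma^N\neq\id$ via $T_{\xi_0\otimes\delta_0\otimes e}$ --- coincide with the paper's.
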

\begin{proof}
    Assume that there exists $N$ and $V\in U(e(\mathcal{T}_X)e)$ with $\gamma^N=\ad V$.
    By Lemma \ref{Mats2} and $\Gamma^N(V)=VVV^*=V$,
    one has $V\in (\mathcal{T}_X)^{\Gamma^N}=E$.
    Since $V\in e(\mathcal{T}_X)e$,
    we have $V\in eEe=\C e$ (i.e., $\gamma^N=\id$).
    This is a contradiction because we have
    \[\gamma^N (T_{e\zeta\otimes \delta_0\otimes e})=eT_{\zeta\otimes\delta_N\otimes e}\not=T_{e\zeta\otimes\delta_0\otimes e}\in e(\mathcal{T}_X)e.\]
    Thus, $\gamma^N$ is outer.
\end{proof}
\begin{proof}[{Proof of Theorem \ref{Mat0}}]
    By Lemma \ref{Mats2},
    we have $A^\gamma=(e(\mathcal{T}_X)e)^\Gamma\subset eEe=\C e$,
    and Lemma \ref{Mats3} shows that $\gamma$ is aperiodic.
\end{proof}
Our proofs of Theorem \ref{Mats} and Theorem \ref{fee} in the following two sections are given by technical variants of the above argument.

\section{Proof of Theorem \ref{Mats}}
\subsection{Pimsner's Construction for Theorem \ref{Mats}}
Throughout this subsection,
we assume the following:
\begin{enumerate}
    \item $B$ is unital, separable, nuclear,
    \item $E$ is separable, non-unital, nuclear, and has an approximate unit consisting of increasing sequence of projections $\{p_n\}_{n=1}^\infty\subset E,\;\; p_n<p_{n+1}$,
    \item $B\otimes\K\triangleleft E$ is an essential ideal (i.e., $E\subset\M(B\otimes\K)$),
    \item There is a non-degenerate representation $E\subset \mathbb{B}(H)$ and a unitary representation $\{U_g\}_{g\in G}\subset U(\mathbb{B}(H))$ of a countable infinite discrete group $G$ satisfying $U_g E U_g^*=E$, $U_g (B\otimes\K)U_g^*=B\otimes\K$ and $U_g(1_B\otimes e_0)=(1_B\otimes e_0)U_g=1_B\otimes e_0$, where $e_0\in\K$ is a minimal projection. 
\end{enumerate}
We will choose the trivial representation $U_g=1$ later.
Note $\operatorname{\ad}U_g\in\operatorname{Aut}(E)$ and we frequently write $\operatorname{\ad}U_g(x)=g\cdot x,\; x\in E$ for short.
Let $X$ be the following Hilbert $E-E$-bimodule:
\[X:=H\otimes_\mathbb{C} l^2(G)\otimes_\mathbb{C} E,\quad a(\zeta\otimes\delta_h\otimes x)b:=a(\zeta)\otimes \delta_h\otimes (xb),\;\; a, b, x\in E,\;\; \zeta\in H,\;\; h\in G.\]
A $G$-action on $X$ is given by
\[g\cdot(\zeta\otimes\delta_h\otimes x):=U_g(\zeta)\otimes\delta_{gh}\otimes (g\cdot x)\]
and the following holds
\[g\cdot ((\zeta\otimes\delta_h\otimes x)b)=U_g(\zeta)\otimes\delta_{gh}\otimes g\cdot(xb)=g\cdot(\zeta\otimes\delta_h\otimes x)(g\cdot b),\]
\[g\cdot (a(\zeta\otimes\delta_h\otimes x))=U_gaU_g^*U_g(\zeta)\otimes\delta_{gh}\otimes (g\cdot x)=(g\cdot a)(g\cdot (\zeta\otimes\delta_h\otimes x)),\]
\[\langle g\cdot(\zeta\otimes\delta_h\otimes x), g\cdot(\xi\otimes\delta_k\otimes y)\rangle_X=\langle U_g(\zeta), U_g(\xi)\rangle_H\delta_{h, k}(g\cdot x)^*(g\cdot y)=g\cdot (\langle \zeta\otimes\delta_h\otimes x, \xi\otimes\delta_k\otimes y\rangle_X)\]
(see \cite{Mey, P}).

From this $G$-Hilbert $E-E$-bimodule $X$ and the Fock space
\[\mathcal{F}(X):=E\Omega_X\oplus\bigoplus_{k=1}^\infty X^{\otimes_Ek},\]
the Toeplitz--Pimsner algebra is given by
\[\mathcal{T}_X:=C^*(\{T_{\zeta\otimes\delta_h\otimes x}, E\;|\; \zeta\in H,\; h\in G,\; x\in E\})=C^*(\{T_{\zeta\otimes\delta_h\otimes x}\;|\; \zeta\in H,\; h\in G,\; x\in E\})\subset \mathcal{L}_E(\mathcal{F}(X))\]
because of $\overline{\langle X, X\rangle_X}^{||\cdot||}=E$.
A quasi-free action $\Gamma$ of $G$ on $\mathcal{T}_X$ is defined by
\[\Gamma_g( T_{(\zeta\otimes\delta_h\otimes x)}):=T_{g\cdot(\zeta\otimes\delta_h\otimes x)}.\]
Note that $\overline{\langle X, X\rangle_X}^{||\cdot||}=E$ implies
\[E=\overline{\operatorname{span}}\{T_\mu^*T_\nu\; |\; \mu, \nu\in X^{\otimes_E^k},\;\; k\in\mathbb{N}\}\subset\mathcal{T}_X,\]
and this embedding is $G$-equivariant.
\begin{thm}[{\cite[Proof of Thm. 4.4, Rem. 4.10.]{P}}]\label{wk}
    The embedding $(E, \operatorname{\ad}U_\cdot)\subset (\mathcal{T}_X, \Gamma)$ is a $KK^G$-equivalence.
\end{thm}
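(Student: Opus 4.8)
The plan is to promote Pimsner's non-equivariant computation to the equivariant level by exhibiting an explicit $G$-equivariant inverse, rather than by any averaging or Baum--Connes type argument. This is forced on us: here $G$ is only assumed countable, infinite and discrete (\emph{not} amenable), so Theorem~\ref{wkeq} is unavailable, and we must instead rely on the fact that Pimsner's inverse is given by an \emph{explicit} Kasparov module that turns out to be equivariant by inspection. Non-equivariantly, the assertion that $\iota : E \hookrightarrow \mathcal{T}_X$ is a $KK$-equivalence is exactly \cite[Proof of Thm.~4.4, Rem.~4.10]{P}; the standing hypotheses $\overline{\langle X, X\rangle_X}=E$ and the injectivity of the left $E$-action (guaranteed by the non-degenerate representation $E\subset\mathbb{B}(H)$) are precisely what Pimsner's argument requires. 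Pimsner's inverse to $KK(\iota)$ is a Kasparov $(\mathcal{T}_X, E)$-module built from the Fock module $\mathcal{F}(X)$, the Fock representation $\phi : \mathcal{T}_X\to\mathcal{L}_E(\mathcal{F}(X))$, and an operator $F$ assembled from the creation/shift operators together with the projection $P_0\in\mathcal{K}(\mathcal{F}(X))$ onto $E\Omega_X$ (after the usual doubling $\mathcal{F}(X)\oplus\mathcal{F}(X)$ that accommodates the $\mathbb{Z}/2$-grading). The whole strategy is then to verify that every ingredient of this cycle is $G$-equivariant on the nose.

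First I would record the $G$-action on the Fock module. The action $g\cdot(\zeta\otimes\delta_h\otimes x)=U_g\zeta\otimes\delta_{gh}\otimes(g\cdot x)$ extends diagonally to each tensor power $X^{\otimes_E k}$ and, via $g\cdot(x\Omega_X)=(g\cdot x)\Omega_X$, to all of $\mathcal{F}(X)$, fixing the vacuum $\Omega_X$ and restricting to $\ad U_\cdot$ on $E\Omega_X\cong E$. By the bimodule identities already checked in the text, this makes $\mathcal{F}(X)$ a $G$-equivariant right Hilbert $(E,\ad U_\cdot)$-module carrying a linear $G$-action $U^{\mathcal{F}}$ with $U^{\mathcal{F}}_g(\xi x)=U^{\mathcal{F}}_g(\xi)(g\cdot x)$ and $\langle U^{\mathcal{F}}_g\xi, U^{\mathcal{F}}_g\eta\rangle=g\cdot\langle\xi,\eta\rangle$. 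Since the creation operators satisfy $U^{\mathcal{F}}_g T_{\zeta\otimes\delta_h\otimes x}(U^{\mathcal{F}}_g)^{-1}=T_{g\cdot(\zeta\otimes\delta_h\otimes x)}$, the Fock representation $\phi$ intertwines $\Gamma$ with the $G$-action on $\mathcal{L}_E(\mathcal{F}(X))$ induced by $U^{\mathcal{F}}$, i.e. $\phi(\Gamma_g(a))=U^{\mathcal{F}}_g\,\phi(a)\,(U^{\mathcal{F}}_g)^{-1}$. Thus $(\phi,\mathcal{F}(X))$ is a $G$-equivariant Kasparov bimodule as soon as a $G$-invariant $F$ is supplied.

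The crux is the $G$-invariance of $F$, and here everything works because $G$ respects the grading of $\mathcal{F}(X)$: (i) the grading by tensor degree is preserved by $U^{\mathcal{F}}$, so any function of the number operator --- in particular the parity grading and whatever cut-offs Pimsner uses --- commutes with the $G$-action; (ii) $P_0$ is $G$-invariant because $\Omega_X$ is $G$-fixed; and (iii) the shift data entering $F$ is built from the equivariant creation operators. Hence Pimsner's $F$ is $G$-invariant and his cycle is a genuine $KK^G$-cycle; this is the point at which the discreteness of $G$ becomes harmless, since no averaging is needed to achieve equivariance. The main obstacle is then the bookkeeping step of checking that the operator identities and homotopies Pimsner uses to prove $KK(\iota)\hat{\otimes}[\text{cycle}]=I_E$ and $[\text{cycle}]\hat{\otimes}KK(\iota)=I_{\mathcal{T}_X}$ remain valid $G$-equivariantly. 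Because every operator occurring in them is a polynomial in the (equivariant) creation operators and elements of $E$, acted on by the ($G$-invariant) grading, each such homotopy is automatically $G$-equivariant and the two Kasparov products are computed exactly as in \cite{P}. This yields $KK^G(\iota)\in KK^G((E,\ad U_\cdot),(\mathcal{T}_X,\Gamma))^{-1}$, as claimed.
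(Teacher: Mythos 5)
Your proposal is correct and follows essentially the same route as the paper: the paper gives no separate argument but simply cites Pimsner's proof of Theorem 4.4 together with his Remark 4.10, whose content is exactly your observation that the explicit Fock-module cycle inverting $KK(\iota)$, the grading/vacuum projection, and all the homotopies in Pimsner's argument are $G$-equivariant on the nose for a quasi-free action, so no averaging (and hence no amenability) is needed. Your write-up is a faithful expansion of that citation.
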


\begin{lem}\label{kir}
$\mathcal{T}_X$ is a stable Kirchberg algebra.
\end{lem}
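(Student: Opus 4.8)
The plan is to show that $\mathcal{T}_X$ is simple, purely infinite, nuclear, and separable, and that it is stable; since $\mathcal{T}_X$ is the Toeplitz--Pimsner algebra of the bimodule $X = H \otimes l^2(G) \otimes E$, most of these properties should follow from general Pimsner theory together with the structure of $X$. First I would observe that separability is immediate ($E$ and $H$ are separable, $G$ is countable), and nuclearity follows because $E$ is nuclear and the Pimsner construction preserves nuclearity of the coefficient algebra (the Toeplitz--Pimsner algebra of a nuclear bimodule over a nuclear algebra is nuclear). The genuinely substantive points are simplicity, pure infiniteness, and stability.

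The key structural feature to exploit is that the left action of $E$ on $X$ is given by $a \mapsto a \otimes 1_{l^2(G)} \otimes 1$ acting on $H \otimes l^2(G) \otimes E$, and since $l^2(G)$ is infinite-dimensional, this left action lands in $\mathcal{K}_E(X)$ only trivially; more precisely, as noted in the parenthetical remark in the $\C$-case (``$E \otimes 1_{l^2(\mathbb{Z})} \cap \mathbb{K}(H \otimes l^2(\mathbb{Z})) = \{0\}$''), the left action is injective and has trivial intersection with the compacts, so the bimodule is full and the creation operators generate a Cuntz--Pimsner-type algebra that here already coincides with the Toeplitz--Pimsner algebra. I would appeal to Kumjian's theorem \cite[Theorem 3.1.]{Kumjian}, which is exactly the tool invoked in the $\C$-coefficient case in Section \ref{keycon}: for a bimodule of this infinite-multiplicity form ($H \otimes l^2(G) \otimes E$ with $H$ infinite-dimensional), $\mathcal{T}_X$ is a Kirchberg algebra, and the embedding $E \hookrightarrow \mathcal{T}_X$ is a KK-equivalence. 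The infinite-dimensionality of $H$ (or of $l^2(G)$) is what forces pure infiniteness: one produces infinitely many creation operators $T_{\zeta_i \otimes \delta_h \otimes x}$ with mutually orthogonal ranges, so that corners contain copies of $\mathcal{O}_\infty$-type isometries, giving proper infiniteness of projections.

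For stability, the main point is that $\mathcal{T}_X$ contains no unit: since $E$ is assumed non-unital with an approximate unit $\{p_n\}$ of increasing projections, and the embedding $E \hookrightarrow \mathcal{T}_X$ is essential enough that $\{p_n\}$ remains an approximate unit for $\mathcal{T}_X$ under the Fock representation, $\mathcal{T}_X$ is non-unital. I would then invoke Zhang's theorem (or the Kirchberg--R{\o}rdam stability criterion): a $\sigma$-unital simple purely infinite C*-algebra is stable if and only if it is non-unital. Thus establishing non-unitality of $\mathcal{T}_X$ together with simplicity and pure infiniteness yields stability automatically. Concretely, I would verify that no $p_n$ is a unit by exhibiting a creation operator $T_{\zeta \otimes \delta_h \otimes x}$ with $p_n T_{\zeta \otimes \delta_h \otimes x} \neq T_{\zeta \otimes \delta_h \otimes x}$ for $x \notin p_n E$, which is possible precisely because $E$ is non-unital.

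The step I expect to be the main obstacle is the careful verification of simplicity and pure infiniteness in the relative setting, i.e., confirming that the hypotheses of Kumjian's theorem \cite[Theorem 3.1.]{Kumjian} genuinely apply to $X = H \otimes l^2(G) \otimes E$ over a \emph{non-unital} coefficient algebra $E$ (Kumjian's original statement may be phrased for unital or stably-structured coefficients). I would need to check that the left action $\varphi: E \to \mathcal{L}_E(X)$ is injective with $\varphi(E) \cap \mathcal{K}_E(X) = \{0\}$, which forces $\mathcal{T}_X = \mathcal{O}_X$ (Toeplitz equals Cuntz--Pimsner), and then that $X$ is full so that the Cuntz--Pimsner algebra of such an infinite-multiplicity full bimodule is simple and purely infinite. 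The non-unitality of $E$ is harmless for these arguments but requires that I pass to the hereditary/corner structure carefully; once simplicity and pure infiniteness are in hand, stability is a formal consequence of non-unitality as above.
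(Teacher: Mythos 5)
Your overall skeleton matches the paper's: Kumjian's theorem for the Kirchberg property, then non-unitality plus Zhang's dichotomy for stability. The stability half of your argument is essentially the paper's (the paper notes that the approximate unit $\{p_n\}$ of projections satisfies $\|p_n-p_{n+1}\|=1$, hence cannot norm-converge to a unit; your variant --- showing no $p_n$ is itself a unit --- works equally well once one knows $\{p_n\}$ is an approximate unit of $\mathcal{T}_X$, which follows from non-degeneracy of $E\subset\mathbb{B}(H)$ and $p_nxp_n\to x$ in $E$).

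However, the central step --- simplicity and pure infiniteness of $\mathcal{T}_X$ --- is left open in your proposal, and this is a genuine gap rather than a routine verification. You correctly flag that \cite[Theorem 3.1.]{Kumjian} is stated for a \emph{unital} coefficient algebra, but your plan to ``check that the hypotheses genuinely apply'' over the non-unital $E$ and to ``pass to the hereditary/corner structure carefully'' is not an argument: Kumjian's proof uses the unit of the coefficient algebra (for instance to produce the isometries witnessing proper infiniteness), so a concrete reduction to the unital case is needed. This is precisely what the paper supplies, and it is the reason assumption 2 (the increasing approximate unit of projections $p_n\in E$) appears in the hypotheses at all: the paper forms the unital corners $p_nEp_n$ and the bimodules $X_n:=p_nH\otimes l^2(G)\otimes p_nEp_n$, applies Kumjian to each --- legitimately, since $p_nEp_n$ is unital and $p_nEp_n\otimes 1_{l^2(G)}\cap\K(p_nH\otimes l^2(G))=\{0\}$ because $|G|=\infty$ --- to obtain unital Kirchberg algebras $\mathcal{T}_{X_n}$, and then shows $\mathcal{T}_X=\overline{\bigcup_{n=1}^\infty\mathcal{T}_{X_n}}$ as an increasing union, using non-degeneracy of $E\subset\mathbb{B}(H)$ so that $\overline{\bigcup_n p_nH}=H$. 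Permanence of separability, nuclearity, simplicity and pure infiniteness under such inductive limits then gives that $\mathcal{T}_X$ is a Kirchberg algebra, and Zhang's dichotomy finishes the proof. In your write-up the projections $p_n$ enter only in the non-unitality step; the missing idea is to use them to reduce the Kumjian step itself to the unital case via this corner/inductive-limit device.
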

\begin{proof}
By our assumption 2.,
we take an increasing sequence of projections $p_n<p_{n+1}\in E$ which gives an approximate unit.
Recall $|G|=\infty$ (i.e., $p_n E p_n\otimes 1_{l^2(G)}\cap \K(p_nH\otimes l^2(G))=\{0\}$).
Applying \cite{Kumjian} for a Hilbert $p_nEp_n-p_nEp_n$-bimodule $X_n :=p_nH\otimes l^2(G)\otimes p_nEp_n$,
the Toeplitz--Pimsner algebra $\mathcal{T}_{X_n}$ is a unital Kirchberg algebra and the universality of Toeplitz--Pimsner algebra gives an inclusion
\[\mathcal{T}_{X_n}\ni T_\zeta\mapsto T_\zeta\in \mathcal{T}_X, \quad \zeta\in X_n.\]
Since the representation $E\subset \mathbb{B}(H)$ is non-degenerate,
one has $\overline{\bigcup_{n=1}^\infty p_n H}=H$ and $\mathcal{T}_X=\overline{\bigcup_{n=1}^\infty\mathcal{T}_{X_n}}$.
Note that  $\mathcal{T}_X$ is non-unital because $\{1_{\mathcal{T}_{X_n}}\}_{n=1}^\infty=\{p_n\}_{n=1}^\infty$ is an approximate unit of $\mathcal{T}_X$ satisfying $||p_n-p_{n+1}||=1$. 
Thus, the permanence properties and Zhang's dichotomy theorem (see \cite{Zha}, \cite[Prop. 4.1.3, Prop. 4.1.8]{Ro}) shows that $\mathcal{T}_X$ is a stable Kirchberg algebra.
\end{proof}

Recall $\Gamma_g(1_B\otimes e_0)=U_g(1_B\otimes e_0)U_g^*=1_B\otimes e_0=U_g(1_B\otimes e_0)$.
The corner \[(A_E, \gamma^E):=({_{(1_B\otimes e_0)}{(\mathcal{T}_X)}_{(1_B\otimes e_0)}}, \Gamma |)\] is a unital Kirchberg $G$-algebra with the unital embedding $\iota : B\ni b\mapsto b\otimes e_0 \in (1_B\otimes e_0)E(1_B\otimes e_0)\subset A_E$ making the following diagram commute
\[\xymatrix{
(B, \id)\ar[d]^{\iota}\ar@{=}[r]&({_{(1_B\otimes e_0)}(B\otimes\K)_{(1_B\otimes e_0)}}, \ad U_\cdot)\ar[d]\ar[r]^{\quad\quad\quad\; Lem \ref{mori}}&(B\otimes\K, \ad U_\cdot)\ar[d]\ar@{=}[r]&(B\otimes\K, \ad U_\cdot)\ar[d]\\
(A_E, \gamma^E)\ar@{=}[r]&({_{(1_B\otimes e_0)}(\mathcal{T}_X)_{(1_B\otimes e_0)}}, \Gamma|)\ar[r]^{\quad\quad\quad Lem \ref{mori}, \;\ref{kir} }&(\mathcal{T}_X, \Gamma)&(E, \ad U_\cdot)\ar[l]^{Thm \ref{wk}}
}\]
where all maps are equivariant $*$-homomorphisms and all horizontal arrows are $KK^G$-equivalences.
\begin{cor}\label{yhe}
    For an equivariant extension satisfying the assumptions 1. 2. 3. 4.,
    there are $KK^G$-equivalences $k_E\in KK^G((A_E, \gamma^E), (E, \ad U_\cdot))^{-1}$ and $KK^G(i_{(1_B\otimes e_0)})\in KK^G((B, \id), (B\otimes\K, \ad U_\cdot))^{-1}$ making the following diagram commute
    \[
    \xymatrix{(B, \id)\ar[d]^{KK^G(\iota)}\ar@<+0.8 ex>[r]^{KK^G(i_{(1_B\otimes e_0)})\quad\quad\quad}&(B\otimes\K, \ad U_\cdot)\ar[d]\\
    (A_E, \gamma^E)\ar[r]^{k_E}&(E, \ad U_\cdot).
    }\]  
\end{cor}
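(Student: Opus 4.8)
The plan is to extract the two claimed equivalences directly from the large diagram preceding the statement, whose horizontal arrows have all been identified as $KK^G$-equivalences, and then to condense its outer columns into the asserted square, whose right vertical arrow is $KK^G(l_E)$ for the ideal inclusion $l_E : B\otimes\K\hookrightarrow E$. Concretely, I would take $KK^G(i_{(1_B\otimes e_0)})$ to be the top-row corner equivalence $(B, \id)\cong(p(B\otimes\K)p, \ad U_\cdot)\to(B\otimes\K, \ad U_\cdot)$, where $p:=1_B\otimes e_0$, and I would define $k_E$ to be the composite of the corner equivalence $(A_E, \gamma^E)\hookrightarrow(\mathcal{T}_X, \Gamma)$ with the inverse of the Theorem \ref{wk} equivalence $(E, \ad U_\cdot)\hookrightarrow(\mathcal{T}_X, \Gamma)$.

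First I would verify the hypotheses of Lemma \ref{mori} for the two corner embeddings attached to the $G$-invariant projection $p$. Invariance holds in both $\M(B\otimes\K)$ and $\M(\mathcal{T}_X)$ since $\Gamma_g(1_B\otimes e_0)=U_g(1_B\otimes e_0)U_g^*=1_B\otimes e_0$ by assumption 4. For the top arrow, fullness $\overline{(B\otimes\K)p(B\otimes\K)}=B\otimes\K$ is immediate because $e_0$ is a minimal, hence full, projection of $\K$, and $p(B\otimes\K)p=B\otimes e_0\K e_0\cong B$; for the bottom arrow, fullness $\overline{\mathcal{T}_X\, p\,\mathcal{T}_X}=\mathcal{T}_X$ follows from the simplicity of $\mathcal{T}_X$ established in Lemma \ref{kir}. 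Thus both corner embeddings are $KK^G$-equivalences by Lemma \ref{mori}, Theorem \ref{wk} supplies the remaining equivalence $(E, \ad U_\cdot)\hookrightarrow(\mathcal{T}_X, \Gamma)$, and $k_E$ is a $KK^G$-equivalence as a composite of two.

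It remains to check commutativity, and here the point is that every map in sight is the restriction of a genuine $G$-equivariant $*$-homomorphism into the one ambient algebra $\mathcal{T}_X$. Writing $\kappa : E\hookrightarrow\mathcal{T}_X$ for the inclusion of Theorem \ref{wk} and $c$ for the $KK^G$-class of the corner embedding $A_E\hookrightarrow\mathcal{T}_X$, the two composites
\[B\xrightarrow{\iota}A_E\hookrightarrow\mathcal{T}_X \quad\text{and}\quad B\xrightarrow{\cong}p(B\otimes\K)p\hookrightarrow B\otimes\K\xrightarrow{l_E}E\xrightarrow{\kappa}\mathcal{T}_X\]
both equal the single $*$-homomorphism $b\mapsto b\otimes e_0$, so their $KK^G$-classes agree. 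This gives $KK^G(\iota)\hat{\otimes}c=KK^G(i_{(1_B\otimes e_0)})\hat{\otimes}KK^G(l_E)\hat{\otimes}KK^G(\kappa)$, and since $k_E=c\hat{\otimes}KK^G(\kappa)^{-1}$, cancelling the invertible class $KK^G(\kappa)$ yields $KK^G(\iota)\hat{\otimes}k_E=KK^G(i_{(1_B\otimes e_0)})\hat{\otimes}KK^G(l_E)$, i.e. commutativity of the square. I expect no genuine obstacle here beyond bookkeeping: the only care needed is to keep the corner projection $1_B\otimes e_0$ consistent across $\M(B\otimes\K)$ and $\M(\mathcal{T}_X)$ and to insert $KK^G(\kappa)^{-1}$ on the correct side so that the inclusion $E\hookrightarrow\mathcal{T}_X$ cancels. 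All the substantive input—pure infiniteness and simplicity of $\mathcal{T}_X$ from Lemma \ref{kir} and the $KK^G$-equivalence $E\subset\mathcal{T}_X$ from Theorem \ref{wk}—is already available, so the corollary is a short diagram chase repackaging the preceding diagram.
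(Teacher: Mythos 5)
Your proposal is correct and is essentially identical to the paper's argument: the paper proves the corollary by exhibiting exactly the diagram you reconstruct, identifying $(B,\id)$ and $(A_E,\gamma^E)$ with the corners of $(B\otimes\K,\ad U_\cdot)$ and $(\mathcal{T}_X,\Gamma)$ at $p=1_B\otimes e_0$ (full and $G$-invariant, so Lemma \ref{mori} plus Lemma \ref{kir} apply), using Theorem \ref{wk} for $(E,\ad U_\cdot)\hookrightarrow(\mathcal{T}_X,\Gamma)$, and reading off commutativity from the fact that all maps are honest equivariant $*$-homomorphisms agreeing with $b\mapsto b\otimes e_0$. Your definition $k_E=c\hat{\otimes}KK^G(\kappa)^{-1}$ and the cancellation of $KK^G(\kappa)$ is precisely how the paper's diagram yields the asserted square.
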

\begin{lem}[{cf \cite[Thm. 2.1.]{Mey}}]\label{out}
    For $g\not=e$, the automorphism $\gamma^E_g$ is outer.
\end{lem}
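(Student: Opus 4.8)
The plan is to argue by contradiction: suppose $\gamma^E_g=\ad V$ for some $V\in U(A_E)$ and extract an absurdity from the fact that $\gamma^E_g=\Gamma_g|$ is the quasi-free automorphism induced by the bimodule map $\zeta\otimes\delta_h\otimes x\mapsto U_g\zeta\otimes\delta_{gh}\otimes(g\cdot x)$, whose decisive feature is the left translation $\delta_h\mapsto\delta_{gh}$ on $\ell^2(G)$. For $g\neq e$ this translation is fixed-point-free, and this is exactly the input that forces such a quasi-free automorphism to be outer (cf. \cite[Thm.~2.1]{Mey}). Two general facts will be used throughout. First, $A_E$ is a unital Kirchberg algebra, hence simple, so that $\ad V=\gamma^E_g$ forces $\gamma^E_g(V)=V$, i.e. $V\in(A_E)^{\gamma^E_g}=e(\mathcal{T}_X)^{\Gamma_g}e$ with $e:=1_B\otimes e_0$ (here we use that $e$ is $\Gamma_g$-fixed). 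Second, $e$ is a full $\Gamma$-fixed projection by Lemma \ref{mori}, \ref{kir}, so $(A_E,\gamma^E)$ is $G$-equivariantly Morita equivalent to $(\mathcal{T}_X,\Gamma)$; this lets me transport outerness between the two algebras, since proper outerness is a Morita invariant and simplicity makes ``outer'' coincide with ``properly outer''.

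When $g$ has infinite order, the orbits of $\langle g\rangle$ acting on $G$ by left translation are infinite, and the computation of Lemma \ref{Mats2} goes through verbatim with $\Gamma^{mN}$ replaced by $\Gamma_{g^m}$: approximating a $\Gamma_g$-fixed element by monomials whose $\ell^2(G)$-labels lie in a finite set $F_1$ and choosing $m$ with $g^mF_1\cap F_2=\emptyset$, the same inner-product estimate in the Fock representation yields $(\mathcal{T}_X)^{\Gamma_g}=E$. Hence $V\in eEe\subset E$. But an element of $E$ acts on $X$ only through its $H$- and $E$-legs and therefore preserves the label $h$, whereas $\gamma^E_g$ sends $h$ to $gh\neq h$; testing $\ad V$ against a corner creation operator $e\,T_{\zeta\otimes\delta_h\otimes x}\,e$ shows $\ad V\neq\gamma^E_g$, the desired contradiction.

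The \emph{main obstacle} is the torsion case, $g$ of finite order $d$. Now the orbits of $\langle g\rangle$ on $G$ are finite, symmetrizing a vector over its orbit produces $\Gamma_g$-invariant elements, and consequently $(\mathcal{T}_X)^{\Gamma_g}\supsetneq E$: the fixed-point computation of Lemma \ref{Mats2} fails and $V$ can no longer be forced into $E$. To handle this I would exploit the gauge action $\beta_z(T_\xi)=zT_\xi$, which commutes with $\Gamma$ and restricts to $A_E$ since $e$ is gauge-fixed; simplicity of $A_E$ then gives $\beta_z(V)V^*\in\C1$, so $V$ may be taken gauge-homogeneous, and $\Gamma_g^d=\id$ forces $V^d\in\C1$, reducing the situation to a projective $\mathbb{Z}/d$-representation implementing $\Gamma_g$. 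The contradiction is then drawn from Meyer's criterion \cite[Thm.~2.1]{Mey}, transported to $\mathcal{T}_X$ via the equivariant Morita equivalence above: the freeness hypothesis there is the fixed-point-freeness of $h\mapsto gh$, which holds for every $g\neq e$ irrespective of its order, so $\Gamma_g$ admits no implementing unitary in $\M(\mathcal{T}_X)$. This spectral input over $\langle g\rangle\cong\mathbb{Z}/d$ is the delicate point, and it is precisely where the argument must go beyond the elementary push-to-infinity of Lemma \ref{Mats2}.
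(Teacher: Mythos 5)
Your infinite-order case is essentially sound: for $g$ of infinite order one can indeed find $m$ with $g^mF_1\cap F_2=\emptyset$, so the push-to-infinity argument of Lemma \ref{Mats2}/Theorem \ref{km} applies to the single automorphism $\gamma^E_g$ and forces $V\in (1_B\otimes e_0)E(1_B\otimes e_0)=B\otimes e_0$, after which the label-preservation argument gives the contradiction (this is a valid alternative to what the paper does). The genuine gap is the torsion case, which is exactly where you stop proving and start citing. ``The contradiction is then drawn from Meyer's criterion'' is not an argument: the paper cites \cite[Thm. 2.1.]{Mey} only as a ``cf.''\ and supplies a self-contained proof, and neither the paper nor your proposal verifies that Meyer's hypotheses hold for this particular bimodule $X=H\otimes l^2(G)\otimes E$ over the non-simple, non-unital coefficient algebra $E$, let alone for the compression to the corner $A_E$. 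Moreover, to invoke any such criterion you need the implication ``$\gamma^E_g$ inner on $A_E$ $\Rightarrow$ $\Gamma_g$ implemented by a unitary in $\M(\mathcal{T}_X)$'', which you justify only by the slogan that proper outerness is Morita invariant and that outerness coincides with proper outerness on simple algebras. For the stable, hence non-unital, algebra $\mathcal{T}_X$, identifying ``not properly outer'' with ``implemented by a multiplier unitary'' is a nontrivial Kishimoto/Sakai-type statement, not a formality. So the hardest half of the lemma rests on two unverified black boxes.

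The irony is that your own preliminary observations nearly close the gap without \cite{Mey}. You note (correctly, as does the paper) that $\beta_z(V)V^*$ is central in the unital Kirchberg algebra $A_E$, so $\beta_z(V)=z^mV$ for some $m\in\mathbb{Z}$, and that $\Gamma_g^d=\id$ forces $V^d\in\C 1_{A_E}$; but then $z^{dm}V^d=\beta_z(V^d)=V^d\neq 0$ gives $m=0$ outright, i.e.\ $V$ is gauge-invariant. This is where the paper's proof continues, uniformly for every $g\neq e$ and with no case split: average the finite-support approximant of $V$ over the gauge action so that only terms $(1_B\otimes e_0)\bigl(a+\sum_iT_{\xi_i}T_{\eta_i}^*\bigr)(1_B\otimes e_0)$ with $|\xi_i|=|\eta_i|\geq 1$ survive, choose $h\notin F$, and compute $VT_{\zeta\otimes\delta_h\otimes(1_B\otimes e_0)}V^*\Omega_X$ in the Fock module: the annihilation legs kill the resulting one-particle vector because its $l^2(G)$-label $h$ lies outside $F$, so the outcome is approximately $a(\zeta)\otimes\delta_h\otimes a^*$, whereas $\ad V=\gamma^E_g$ says it equals the unit vector $\zeta\otimes\delta_{gh}\otimes(1_B\otimes e_0)$; since $gh\neq h$ these are orthogonal, yielding $1\approx 0$. (The paper pins down $m=0$ by a Fock-space degree argument valid for all $g$; your $V^d\in\C 1$ trick is a legitimate substitute in the torsion case.) The correct repair is therefore to run this vacuum-vector computation rather than import an external outerness criterion; doing so also makes your infinite-order detour through the fixed-point algebra unnecessary.
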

\begin{proof}
    Assume that there is a unitary $V\in {_{(1_B\otimes e_0)}(\mathcal{T}_X)_{(1_B\otimes e_0)}}$ satisfying $\ad V=\gamma^E_g$.
    First, we show that $\beta_z(V)=V$ for the gauge action
    \[\beta_z : \mathcal{T}_X\ni T_{\zeta\otimes\delta_h\otimes x}\mapsto zT_{\zeta\otimes\delta_h\otimes x}\in\mathcal{T}_X,\; z\in\mathbb{T}=\{z\in\C\;|\; |z|=1\}.\]
    Note that $\beta_z$ restricts to ${_{(1_B\otimes e_0)}(\mathcal{T}_X)_{(1_B\otimes e_0)}}$ because $\beta_z(1_{B}\otimes e_0)=1_{B}\otimes e_0$ and the ristriction of $\beta_z$ is implemented by the following diagonal unitary
    \[W(z):=\left(\begin{array}{ccccc}
1&0&0&0\cdots\\
0&z&0&0\cdots\\
0&0&z^2&0\cdots\\
0&0&0&z^3\cdots\\
\vdots&\vdots&\vdots&\vdots\ddots
\end{array}\right)\in \mathcal{L}_E({_{(1_B\otimes e_0)}E}\Omega_X\oplus\bigoplus_{k=1}^\infty{_{(1_B\otimes e_0)}X}\otimes_EX^{\otimes_E k-1}).\]
Since $\beta_z\circ\Gamma_g(T_{\zeta\otimes\delta_h\otimes x})=T_{z g\cdot(\zeta\otimes\delta_h\otimes x)}=\Gamma_g\circ\beta_z(T_{\zeta\otimes\delta_h\otimes x})$,
one has $\Gamma_g\circ \beta_z=\beta_z\circ\Gamma_g\in\operatorname{Aut}(\mathcal{T}_X)$,
and the element $\beta_z(V)V^*$ lies in the center of the unital Kirchberg algebra ${_{(1_B\otimes e_0)}(\mathcal{T}_X)_{(1_B\otimes e_0)}}$ (i.e., $\beta_z(V)V^*\in \mathbb{T}$).
Since $\beta_z(V)V^*\beta_w(V)V^*=\beta_w((\beta_z(V)V^*)V)V^*=\beta_{zw}(V)V^*$,
the map $\mathbb{T}\ni z\mapsto \beta_z(V)V^*\in\mathbb{T}$ is a continuous group homomorphism and there is $m\in\mathbb{Z}$ with $\beta_z(V)V^*=z^m$.
We check $m=0$.
In the case $m>0$,
one has
\begin{align*}
    \langle y\Omega_X, V\eta\rangle_{\mathcal{F}(X)}=&\langle W(\bar{z})y\Omega_X, V\eta\rangle_{\mathcal{F}(X)}\\
    =&\langle y\Omega_X, W(z)V\eta\rangle_{\mathcal{F}(X)}\\
    =&\langle y\Omega_X, VW(z)\eta\rangle_{\mathcal{F}(X)}z^m\\
    =&\langle y\Omega_X, V\eta\rangle_{\mathcal{F}(X)}z^{m+|\eta|}
\end{align*}
for $y\in (1_B\otimes e_0)E$ and $\eta\in (1_B\otimes e_0)X\otimes_EX^{\otimes_E|\eta|-1}$, $\eta\in (1_B\otimes e_0)E\Omega_X$ ($|\eta|=0$),
and this implies $\operatorname{Im}V\perp (1_B\otimes e_0)E\Omega$.
This is a contradiction because $V$ is a unitary of ${_{(1_B\otimes e_0)}(\mathcal{T}_X)_{(1_B\otimes e_0)}}\subset\mathcal{L}_E({_{(1_B\otimes e_0)}E}\Omega_X\oplus\bigoplus_{k=1}^\infty{_{(1_B\otimes e_0)}X}\otimes_EX^{\otimes_E k-1})$, and one has $m\leq 0$.
Now the same argument for $V^*$ shows $m=0$ (i.e., $V$ is $\beta$-invariant).

    For any $\epsilon>0$ with $2\epsilon+\epsilon^2<1/2$,
    there exist a finite set $F\subset G$ and finitely many elements $a\in E$, $\mu_i, \nu_i, \xi_i, \eta_i\in \bigcup_{k=1}^\infty (H\otimes l^2(F)\otimes E)^{\otimes_E k}$ approximating $V$ as follows:
    \[||V-(1_B\otimes e_0)\left(a+\sum_i (T_{\mu_i}+T_{\nu_i}^*+T_{\xi_i}T_{\eta_i}^*)\right)(1_B\otimes e_0)||<\epsilon.\]
Applying $\int_\mathbb{T}\beta_z(\cdot) dz$ to the above,
we may assume that
\[||V-(1_B\otimes e_0)\left(a+\sum_iT_{\xi_i}T_{\eta_i}^*\right)(1_B\otimes e_0)||<\epsilon, \quad a\in B\otimes e_0,\quad  |\xi_i|=|\eta_i|\geq 1.\]
Since $|G|=\infty$,
there is $h\in G\backslash F$.
    Fix a vector $\zeta\in (1_B\otimes e_0)H,\; ||\zeta||=1$.
    Since $H\otimes \delta_h\otimes E\perp H\otimes l^2(F)\otimes E$,
    the direct computation yields
    \begin{align*}
    \zeta\otimes\delta_{gh}\otimes (1_B\otimes e_0)
    &=U_g(1_B\otimes e_0)(\zeta)\otimes \delta_{gh}\otimes g\cdot (1_B\otimes e_0)\\
    &=\gamma_g(T_{\zeta\otimes\delta_h\otimes(1_B\otimes e_0)})\Omega_X\\
        &=VT_{\zeta\otimes\delta_h\otimes (1_B\otimes e_0)}V^*\Omega_X\\
        &\approx_{\epsilon+\epsilon(1+\epsilon), ||\cdot||}a(\zeta)\otimes\delta_h\otimes a^*+(1_B\otimes e_0)\sum_iT_{\xi_i}T_{\eta_i}^*(\zeta\otimes\delta_h\otimes a^*)\\
        &=a(\zeta)\otimes\delta_h\otimes a^*+0,
    \end{align*}
    and $g\not=e$ yields the  following contradiction
    \begin{align*}
        1&=||\langle \zeta\otimes\delta_{gh}\otimes(1_B\otimes e_0), \zeta\otimes\delta_{gh}\otimes (1_B\otimes e_0)\rangle_X||\\
        &\approx_{2\epsilon+\epsilon^2}||\langle a(\zeta)\otimes\delta_h\otimes a, \zeta\otimes\delta_{gh}\otimes (1_B\otimes e_0)\rangle_X||=0.\\
    \end{align*}
    Thus, $\gamma^E_g$ must be outer.
\end{proof}
Recall the embedding $\iota : B\ni b\mapsto b\otimes e_0\in B\otimes e_0\subset A_E$.
\begin{thm}\label{km}
    We have $A_E^{\gamma^E}=\iota(B)$ and there is a $G$-equivariant conditional expectation \[\langle-\Omega_X, \Omega_X\rangle_{\mathcal{F}(X)} : A_E\to A_E^{\gamma^E}.\]
\end{thm}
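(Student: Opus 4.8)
The plan is to establish the two inclusions $\iota(B)\subseteq A_E^{\gamma^E}$ and $A_E^{\gamma^E}\subseteq\iota(B)$ separately, and then to recognize the vacuum expectation as the required conditional expectation. The first step is the identification $\iota(B)=(1_B\otimes e_0)E(1_B\otimes e_0)$: since $B$ is unital one has $(1_B\otimes e_0)\M(B\otimes\K)(1_B\otimes e_0)=B\otimes e_0$, and as $B\otimes\K\triangleleft E\subseteq\M(B\otimes\K)$ this pins the corner of $E$ to $B\otimes e_0=\iota(B)$. Next I would record that $\gamma^E$ fixes $\iota(B)$ pointwise: assumption 4 gives $U_g(1_B\otimes e_0)=(1_B\otimes e_0)U_g=1_B\otimes e_0$, so that $\gamma^E_g(b\otimes e_0)=U_g(b\otimes e_0)U_g^*=(1_B\otimes e_0)U_g(b\otimes e_0)U_g^*(1_B\otimes e_0)=b\otimes e_0$ for every $b\in B$. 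This yields $\iota(B)\subseteq A_E^{\gamma^E}$ for free.

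For the conditional expectation I would take $\Phi:=\langle-\,\Omega_X,\Omega_X\rangle_{\mathcal{F}(X)}$, the compression to the vacuum of the corner Fock module $(1_B\otimes e_0)\mathcal{F}(X)$. It is unital and completely positive; it annihilates every monomial $T_\mu T_\nu^*$ of positive Fock-degree and restricts to the identity on the degree-zero part, so its range is exactly the corner $(1_B\otimes e_0)E(1_B\otimes e_0)=\iota(B)$ and it is a genuine conditional expectation onto $\iota(B)$. For $G$-equivariance I would use that $\Gamma_g$ fixes the vacuum and sends positive-degree monomials to positive-degree monomials, together with the identity $(1_B\otimes e_0)(\ad U_g)(a)(1_B\otimes e_0)=(1_B\otimes e_0)a(1_B\otimes e_0)$ for $a\in E$ coming again from assumption 4; combined with the fact that $\gamma^E_g$ fixes the range $\iota(B)$, this gives $\Phi\circ\gamma^E_g=\Phi=\gamma^E_g\circ\Phi$, which is precisely the asserted equivariance.

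The core is the reverse inclusion $A_E^{\gamma^E}\subseteq\iota(B)$, which I would prove as a variant of Lemma \ref{Mats2}, with the infiniteness of $G$ replacing the $\mathbb{Z}$-shift. Fix $Y\in A_E^{\gamma^E}$ and $\epsilon>0$, and approximate $Y$ within $\epsilon$ by $Y_0+R$, where $Y_0:=(1_B\otimes e_0)a(1_B\otimes e_0)\in\iota(B)$ is the degree-zero part ($a\in E$) and $R=(1_B\otimes e_0)\sum_i(T_{\mu_i}+T_{\nu_i}^*+T_{\xi_i}T_{\eta_i}^*)(1_B\otimes e_0)$ is a finite sum of positive-degree monomials whose entries lie in $\bigcup_{k\ge1}(H\otimes l^2(F)\otimes E)^{\otimes_E k}$ for a finite $F\subset G$. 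Since $Y_0\in\iota(B)$ is $\Gamma_g$-fixed, so is $Y-Y_0$. Using $\|Y-Y_0\|=\sup_{\|u\|,\|v\|\le1}\|\langle(Y-Y_0)u,v\rangle\|$ I would pick finitely supported test vectors $u,v\in(1_B\otimes e_0)\mathcal{F}(X)$ with $G$-labels in a finite set $F_2$ that nearly attain this supremum, and then use $|G|=\infty$ to select $g$ with $gF\cap F_2=\emptyset$ (the excluded set $F_2F^{-1}$ being finite). Every term of $\Gamma_g(R)$ then carries a creation or annihilation factor with $G$-label in $gF$, disjoint from $F_2$, so $\langle\Gamma_g(R)u,v\rangle=0$; combining $\Gamma_g(Y-Y_0)=Y-Y_0$ with $\|(Y-Y_0)-R\|<\epsilon$ forces $\|Y-Y_0\|<2\epsilon$, whence $Y\in\iota(B)$ as $\iota(B)$ is closed and $\epsilon$ is arbitrary.

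The main obstacle I anticipate is exactly this last inclusion: the Fock-space bookkeeping must be arranged so that a single translate $\Gamma_g$, supplied by $|G|=\infty$, simultaneously annihilates all positive-degree monomials against the fixed finite-support test vectors while leaving the degree-zero part $Y_0\in\iota(B)$ invariant. The two points demanding genuine care beyond the model computation in Lemma \ref{Mats2} are the corner compression by $1_B\otimes e_0$ (needed so that the surviving part lands in $\iota(B)$ rather than in all of $E$) and the fact that $\Gamma_g$ acts trivially only on $\iota(B)$, not on the whole coefficient algebra $E$; both are handled by assumption 4 as indicated above.
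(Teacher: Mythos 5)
Your proof is correct and follows essentially the same route as the paper's: the vacuum expectation $\langle-\Omega_X,\Omega_X\rangle_{\mathcal{F}(X)}$ compressed to the corner serves as the equivariant conditional expectation, and the reverse inclusion $A_E^{\gamma^E}\subseteq\iota(B)$ is obtained by the identical translation trick (approximate $Y$ by finitely-labelled Fock monomials, test against finitely-supported vectors, and use $|G|=\infty$ to choose $g$ with $gF_1\cap F_2=\emptyset$ so all positive-degree terms pair to zero). Your explicit verification that $(1_B\otimes e_0)E(1_B\otimes e_0)=B\otimes e_0$ via the multiplier-algebra corner and unitality of $B$ is a detail the paper leaves implicit, but the substance of the argument is the same.
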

\begin{proof}
The $G$-equivariant conditional expectation $\langle-\Omega_X, \Omega_X\rangle_{\mathcal{F}(X)} : \mathcal{T}_X\to E$ restricts to $A_E\to \iota(B)$.
We will show $A_E^{\gamma^E}=\iota(B)$.
Since $U_g (1_B\otimes e_0)=1_B\otimes e_0$,
one has $\iota(B)=B\otimes e_0\subset A_E^{\gamma^E}$.
    Fix $Z\in A_E^{\gamma^E}$ and $\epsilon>0$.
    There exist a finite set $F_1\subset G$ and finitely many elements $a\in E, \; \mu_i,\; \nu_i,\; \xi_i,\;\eta_i\in \bigcup_{k=1}^\infty(H\otimes l^2(F_1)\otimes E)^{\otimes_E k}$ approximating $Z$ as follows:
    \[||Z-(1_B\otimes e_0)(a+\sum_iT_{\mu_i}+T^*_{\nu_i}+T_{\xi_i}T_{\eta_i}^*)(1_B\otimes e_0)||<\epsilon.\]
We write $Y:=(1_B\otimes e_0)a(1_B\otimes e_0)\in \iota(B)\subset A_E^{\gamma^E}$.
By the definition of operator norm,
there exist a finite set $F_2\subset G$ and two elements $u, v\in \mathcal{F}(H\otimes l^2(F_2)\otimes E)\subset \mathcal{F}(X)$ with $||u||_{\mathcal{F}(X)}, ||v||_{\mathcal{F}(X)}\leq 1$ satisfying
\[||Z-Y||\approx_\epsilon ||\langle (Z-Y) u, v\rangle_{\mathcal{F}(X)}||.\]
We may assume 
\[u=u_0+u_1,\; u_0\in (1_B\otimes e_0)E\Omega_X,\; u_1\in \bigoplus_{k=1}^\infty ((1_B\otimes e_0)H\otimes l^2(F_2)\otimes E)^{\otimes_E k},\]
\[v=v_0+v_1,\; v_0\in (1_B\otimes e_0)E\Omega_X,\; v_1\in \bigoplus_{k=1}^\infty ((1_B\otimes e_0)H\otimes l^2(F_2)\otimes E)^{\otimes_E k}.\]
Since $|G|=\infty$,
there is $g\in G$ satisfying $g F_1\cap F_2=\emptyset$
and one has
\[\Gamma_g(T_{\mu_i}^*(1_B\otimes e_0))(v)=\Gamma_g(T_{\mu_i}^*)(v)=0+\Gamma_g(T_{\mu_i}^*)(v_1)=0,\]
\[\Gamma_g(T^*_{\nu_i}(1_B\otimes e_0))(u)=\Gamma_g(T_{\nu_i}^*)(u)=0+\Gamma_g(T^*_{\nu_i})(u_1)=0,\;\; \Gamma_g(T^*_{\eta_i}(1_B\otimes e_0))(u)=0,\]
because $H\otimes l^2(gF_1)\otimes E\perp (1_B\otimes e_0)H\otimes l^2(F_2)\otimes E$.
Thus, the direct computation yields
\begin{align*}
    ||Z-Y||&\approx_\epsilon||\langle(Z-Y)u, v\rangle_{\mathcal{F}(X)}||\\
    &=||\langle \Gamma_g(Z-Y) u, v\rangle_{\mathcal{F}(X)}||\\
    &\approx_{\epsilon ||u||||v||}||\langle\Gamma_g((1_B\otimes e_0)(\sum_iT_{\mu_i}+T^*_{\nu_i}+T_{\xi_i}T_{\eta_i}^*)(1_B\otimes e_0))u, v\rangle_{\mathcal{F}(X)}||\\
    &=0.
\end{align*}
For any $\epsilon>0$,
there exists $Y_\epsilon\in\iota(B)$ with $||Z-Y_\epsilon||<2\epsilon$ by the above argument,
and this shows $Z\in \iota(B)$ (i.e., $A_E^{\gamma^E}\subset \iota(B)$).
\end{proof}

\subsection{Proof of Theorem \ref{Mats}}
In this section,
we assume that $A$ is a unital Kirchberg algebra and $B$ is a unital, separable, nuclear C*-algebra.

For a unital embedding $\iota_0 : B\hookrightarrow A$,
we will find an extension $B\otimes\K\triangleleft E$ and a KK-equivalence $\eta\in KK(A, E)^{-1}$ making the following diagram commute
\[\xymatrix{B\otimes\K\ar[r]&E\\
B\ar[u]^{KK(i_{(1_B\otimes e_0)})}\ar[r]^{KK(\iota_0)}&A.\ar[u]^{\eta}}\]
Recall that $\operatorname{Cone}(\iota_0):=\{(a(t), b)\in (C_0(0, 1]\otimes A) \oplus B\;|\; a(1)\in\iota_0(b)\}$ denotes the mapping cone of $\iota_0$ (see \cite[Appendix A]{MN}), and  $ev_1 : \operatorname{Cone}(\iota_0)\ni (a(t), b)\mapsto b=\iota_0^{-1}(a(1))\in B$ denotes the evaluation at $t=1$.
By \cite[Lem. 2.2.]{fkk},
there exists a unital separable, nuclear C*-algebra $C$ with an embedding $f : S\operatorname{Cone}(\iota_0)\hookrightarrow C$ satisfying
\[KK(f)\in KK(S\operatorname{Cone}(\iota_0), C)^{-1},\quad [1_C]_0=0\in K_0(C).\]
We obtain an element $\varphi\in KK(SC, B\otimes \mathbb{K})$ from the following commutative diagram (i.e., $\varphi:=KK(Sf)^{-1}\hat{\otimes}(b^{-1}\otimes I_{\operatorname{Cone}(\iota_0)})\hat{\otimes}KK((i_{1_B\otimes e_0})\circ ev_1)$)
\[\xymatrix{
\operatorname{Cone}(\iota_0)\ar[r]^{KK(ev_1)}\ar[d]^{b\otimes I_{\operatorname{Cone}(\iota_0)}}&B\ar[d]^{KK(i_{1_B\otimes e_0})}\\
SS\operatorname{Cone}(\iota_0)\ar[d]^{KK(Sf)}&B\otimes\mathbb{K}\\
SC,\ar[ur]_{\varphi}&
}
\]
where $b\in KK(\mathbb{C}, S^2)^{-1}$ is the Bott element.
\begin{lem}\label{extri}
    There exists a non-unital, essential extension $B\otimes\K\triangleleft E\subset\M(B\otimes\K)$ with the following exact triangle:
    \[SC\xrightarrow{\varphi}B\otimes\K\xrightarrow{l_E} E\xrightarrow{(\tau^{-1}\circ q_{B})}C,\]
    where $\tau$ is the Busby invariant as in the  diagram below
    \[\xymatrix{
    E\ar[d]\ar[r]^{\tau^{-1}\circ q_B}&C\ar[d]^{\tau}\\
    \M(B\otimes\K)\ar[r]^{q_B}&\Q(B\otimes\K)
    }\]
    Furthermore, the algebra $E$ can be chosen so that $E$ has an approximate unit $\{p_n\}_{n=1}^\infty$ consisting of increasing sequence of projections $p_n<p_{n+1}$. 
\end{lem}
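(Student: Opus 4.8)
The plan is to realize the prescribed $KK$-element $\varphi\in KK(SC, B\otimes\K)$ by a concrete extension using the machinery of Lemma \ref{ade}, and then to upgrade the resulting algebra so that it satisfies the extra structural hypotheses (2) demanded by the Pimsner construction, namely a non-unital, essential, nuclear extension with an approximate unit of increasing projections.

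First I would apply Lemma \ref{ade} with the roles $C\rightsquigarrow C$ and $B\rightsquigarrow B\otimes\K$ (stability of $B\otimes\K$ being harmless, since $(B\otimes\K)\otimes\K\cong B\otimes\K$). The lemma produces a non-unital essential extension $B\otimes\K\to E_0\xrightarrow{\pi_{E_0}}C$ together with the exact triangle
\[
SC\xrightarrow{\varphi\hat\otimes KK(i_{(1_{\M(B\otimes\K)}\otimes e)})} B\otimes\K\otimes\K\to E_0\to C.
\]
After absorbing the extra $\K$ via a Morita identification (Lemma \ref{mori}) and fixing a unitary identification $B\otimes\K\otimes\K\cong B\otimes\K$, I can rewrite this as the exact triangle $SC\xrightarrow{\varphi} B\otimes\K\xrightarrow{l_E} E\xrightarrow{\pi_E}C$, where the Busby invariant $\tau=\pi_E^{-1}\circ q_B$ realizes $E$ as a subalgebra of $\M(B\otimes\K)$ (essentiality is exactly injectivity of $\tau$). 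Nuclearity of $E$ is automatic because $B\otimes\K$ is nuclear, $C$ is nuclear, and nuclearity is closed under extensions. This settles the first sentence of the statement.

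The main obstacle is the last sentence: arranging that $E$ admits an approximate unit consisting of a strictly increasing sequence of projections $p_n<p_{n+1}$. The freedom I have is that Lemma \ref{ade} only fixes the $KK$-class of the extension, and I may add any trivial (split) extension to the Busby invariant without changing its class in $\operatorname{Ext}^1$. My plan is to exploit this: I would add a trivial absorbing summand so that $E$ becomes a full hereditary subalgebra of a stable algebra, and then invoke the structure theory of $\sigma$-unital stable (or purely infinite) algebras, where approximate units of projections are standard. Concretely, since $C$ is separable and unital while $B\otimes\K$ is stable, the extension algebra $E$ is $\sigma$-unital; by adding the split extension corresponding to $1_C\otimes(\text{a rank-one projection})$ and using that $B\otimes\K$ contains an increasing sequence of projections converging strictly to $1_{\M(B\otimes\K)}$, I can pull back an approximate unit $\{p_n\}$ along $\pi_E$ so that each $p_n$ is a genuine projection in $E$ lifting $\pi_E(p_n)$ and $p_n<p_{n+1}$.

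The delicate point will be lifting projections from the quotient $C$ (or from $\Q(B\otimes\K)$) to \emph{genuine} projections in $E$ while preserving both the increasing property and the approximate-unit property; this is where $K_1$-type obstructions could in principle intervene, but they are killed here because the relevant projections come from $B\otimes\K\subset E$ itself rather than from the quotient, so no index obstruction arises. I would therefore build the $p_n$ inductively inside $B\otimes\K\triangleleft E$, using that $B\otimes\K$ has an approximate unit of increasing projections (as $\K$ does, tensored with $1_B$) and that this already forms an approximate unit for the essential ideal; essentiality of the ideal $B\otimes\K\triangleleft E$ guarantees that an approximate unit of the ideal is an approximate unit of $E$. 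Ensuring strictness $\|p_n-p_{n+1}\|=1$ (needed later for non-unitality in Lemma \ref{kir}) follows automatically since consecutive projections in such a sequence are inequivalent and orthogonal differences have norm one.
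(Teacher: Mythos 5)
The first half of your proposal (realizing $\varphi$ by an essential extension through Lemma \ref{ade} and the Ext-isomorphism, with the stabilization $B\otimes\K\otimes\K\cong B\otimes\K$ handled cosmetically) matches the paper's first step and is fine. The second half, however, rests on a false claim: you assert that ``essentiality of the ideal $B\otimes\K\triangleleft E$ guarantees that an approximate unit of the ideal is an approximate unit of $E$.'' This fails for \emph{every} proper closed ideal, essential or not: if $(u_\lambda)\subset B\otimes\K$ satisfied $u_\lambda a\to a$ for all $a\in E$, then each $a\in E$ would be a norm limit of the elements $u_\lambda a$ of the ideal, forcing $E=B\otimes\K$; here $E/(B\otimes\K)\cong C\neq 0$, so no approximate unit of $E$ can lie inside the ideal (compare $C_0((0,1])\triangleleft C([0,1])$, where the ideal is essential but its approximate units do not approximate the constant function $1$). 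Consequently your inductive construction of the $p_n$ inside $B\otimes\K$ cannot even start, and your remark that index obstructions are ``killed because the relevant projections come from $B\otimes\K\subset E$ itself'' is built on the same false premise.

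What the falsity of that claim reveals is the actual content of the lemma, which your proposal never touches. Since $C$ is unital, if $\{p_n\}$ is an approximate unit of projections of $E$, then eventually $\|\pi_E(p_n)-1_C\|<1$, and a projection at distance $<1$ from the unit equals the unit; hence some $p_n$ must be a projection in $\M(B\otimes\K)$ lifting $\tau(1_C)$. This is a genuine projection-lifting problem in the corona algebra, and it is solvable precisely because of the hypothesis you never invoke: $[1_C]_0=0\in K_0(C)$, which was arranged when $C$ was chosen via \cite[Lem.~2.2.]{fkk}. The paper's proof proceeds exactly this way: since $[\tau(1_C)]_0=0$ in $K_0(\Q(B\otimes\K))$, a standard Murray--von Neumann argument gives a unitary $U\in\mathbb{M}_3(\M(B\otimes\K))$ whose image under $q_B$ conjugates $\tau(1_C)\oplus 1\oplus 0$ to $1\oplus 1\oplus 0$; one then replaces the Busby invariant by $\operatorname{Ad}q_B(U)\circ(\tau\oplus(q_B\circ\pi)\oplus 0)$ for a unital $\pi:C\to\M(B\otimes\K)$ with $\pi(C)\cap B\otimes\K=0$ (adding a trivial extension and conjugating do not change the class in $\operatorname{Ext}^1(C,B\otimes\K)$, hence by Lemma \ref{ade} the exact triangle survives). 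In the modified extension $\tilde E$ the unit of $C$ visibly lifts to the multiplier projection $1\oplus 1\oplus 0$, and the desired approximate unit is $p_n:=1\oplus 1\oplus(1_B\otimes r_n)$ with $r_n\in\K$ rank-$n$ projections, i.e., the lifted unit of the quotient plus an increasing approximate unit of the complementary corner of the ideal. Your phrase ``adding the split extension corresponding to $1_C\otimes(\text{a rank-one projection})$'' gestures toward this, but without the $K_0$-vanishing and the unitary correction the lift of $1_C$ need not exist, so the proposal has a genuine gap at the heart of the lemma.
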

\begin{proof}
Applying the isomorphism $\eta_{-, -}$ in Sec. \ref{nati} and Lemma \ref{ade},
there exists an isomorphism
\[KK(SC, B)\ni \varphi\hat{\otimes}KK(i_{1_B\otimes e_0})^{-1}\mapsto [\tau]\in \operatorname{Ext}^1(C, B\otimes\K),\]
and we obtain an essential extension \[B\otimes\K\triangleleft E:=q_B^{-1}(\tau (C))\subset \M(B\otimes\K)\] with the following exact triangle
\[SC\xrightarrow{\varphi}B\otimes\K\to E\xrightarrow{(\tau^{-1}\circ q_{B})}C\]
where $\tau : C\hookrightarrow\Q(B\otimes\K)$ is the Busby invariant of the extension $E$.

We will modify $E$ so that $E$ admits the desired approximate unit.
Recall $[1_C]_0=0\in K_0(C)$ (i.e., $[\tau(1_C)]_0=0\in K_0(\Q(B\otimes\K))$).
By the standard argument on the Murray--von Neumann equivalence and the definition of $K_0$-group,
there is a unitary $U\in \mathbb{M}_3(\M(B\otimes\K))$ satisfying
\[(\id_{\mathbb{M}_3}\otimes q_B)(U)\left(\begin{array}{ccc}\tau(1_C)&0&0\\
0&1_{\Q(B\otimes\K)}&0\\
0&0&0
\end{array}\right)(\id_{\mathbb{M}_3}\otimes q_B)(U^*)=\left(\begin{array}{ccc}1_{\Q(B\otimes\K)}&0&0\\
0&1_{\Q(B\otimes\K)}&0\\
0&0&0
\end{array}\right).\]
For a unital $*$-homomorphism $\pi : C\to \M(B\otimes\K)$ with $\pi(C)\cap B\otimes\K=0$,
consider the following non-unital essential extension
\[\tilde{E}:=\left\{\left.U\left(\begin{array}{ccc}
   x  & B\otimes\K&B\otimes\K  \\
    B\otimes\K & y& B\otimes\K\\
    B\otimes\K& B\otimes\K&B\otimes\K
\end{array}\right)U^*\in\mathbb{M}_3(\M(B\otimes\K))\;\right| \; \begin{array}{cc}
     \tau^{-1}\circ q_B(x)=(q_B\circ \pi)^{-1}\circ q_B(y), \\\;\; x\in E,\;\;
     y\in \pi (C)+B\otimes\K
\end{array}   \right\},\]
\[B\otimes\mathbb{M}_3(\K)\to \tilde{E}\ni U\left(\begin{array}{ccc}
   x  & B\otimes\K&B\otimes\K  \\
    B\otimes\K & y& B\otimes\K\\
    B\otimes\K& B\otimes\K&B\otimes\K
\end{array}\right)U^*\mapsto \tau^{-1}(q_B(x))\in  C.\]
The Busby invariant of $\tilde{E}$ is given by $\ad q_B(U)\circ(\tau\oplus (q_B\circ\pi)\oplus 0)$.
Since $[\tau]=[\tau\oplus (q_B\circ\pi)]=[\ad (\id_{\mathbb{M}_3}\otimes q_B)(U)\circ(\tau\oplus (q_B\circ\pi)\oplus 0)]\in \operatorname{Ext}^1(C, B\otimes\K)$,
the proof of Lemma \ref{ade} again implies that
\[SC\xrightarrow{\varphi}B\otimes\mathbb{M}_3(\K)\to \tilde{E}\to C\]
is an exact triangle (One can also easily check that this new triangle is equivalent to the previous one).
Then, there are projections
\[p_n :=\left(\begin{array}{ccc}1_{\M(B\otimes\K)}&0&0\\
0&1_{\M(B\otimes\K)}&0\\
0&0&(1_B\otimes r_n)
\end{array}\right)\in \tilde{E}\]
where $r_n\in\K$ is the rank $n$ projection satisfying $r_n<r_{n+1}, SOT-\lim_{n\to \infty}r_n=1_{\M(\K)}$.
Fix an arbitrary $Y\in \tilde{E}$.
Since $\ad (\id_{\mathbb{M}_3}\otimes q_B)(U)\circ (\tau\oplus (q_B\circ\pi)\oplus 0)(1_C)=1_{\Q(B\otimes\K)}\oplus 1_{\Q(B\otimes\K)}\oplus 0$, one has \[\id_{\mathbb{M}_3}\otimes q_B(Y(0\oplus 0\oplus 1_{\M(B\otimes\K)}))=\id_{\mathbb{M}_3}\otimes q_B(Y)(1\oplus 1\oplus 0)(0\oplus 0\oplus 1)=0\] and $Y(0\oplus 0\oplus 1_{\M(B\otimes\K)})\in \mathbb{M}_3(B\otimes\K)$,
and the direct computation yields
\begin{align*}
    ||Y-Yp_n||=&||Y(1\oplus 1\oplus 1)-Y(1\oplus 1\oplus (1_B\otimes r_n))||\\
    =&||Y(0\oplus 0\oplus 1)-Y(0\oplus 0\oplus 1) (1\oplus 1\oplus (1_B\otimes r_n))||\\
    \to&0\quad (n\to\infty).
\end{align*}
Thus, the extension $\tilde{E}$ has the desired approximate unit.
\end{proof}

Now we have the following commutative diagram of exact triangles
\[\xymatrix{
SB\otimes\mathbb{K}\ar@<+0.8 ex>[r]^{-KK(Sl_E)}&SE\ar@<+0.8 ex>[r]^{-KK(S(\tau^{-1}\circ q_B))}&SC\ar@<+0.8 ex>[r]^{\varphi}&B\otimes\mathbb{K}\\
SB\ar[u]^{KK(Si_{(1_B\otimes e_0)})}\ar[r]^{-KK(S\iota_0)}&SA\ar[r]&\operatorname{Cone}(\iota_0)\ar[r]^{KK(ev_1)}\ar[u]^{KK(Sf)\hat{\otimes}(b\otimes I_{Cone})}&B,\ar[u]^{KK(i_{(1_B\otimes e_0)})}
}\]
and, by \cite[Sec. 2]{MN} and the suspention isomorphisms of KK-groups, we have a KK-equivalence $\eta\in KK(A, E)^{-1}$ making the following diagram commute 
\[
\xymatrix{
B\otimes \mathbb{K}\ar[r]&E\\
B\ar[u]^{KK(i_{(1_B\otimes e_0)})}\ar[r]^{KK(\iota_0)}&A.\ar[u]^{\eta}
}\]

\begin{proof}[{Proof of Theorem \ref{Mats}}]
    For an embedding $\iota_0 : B\to A$,
    Lemma \ref{extri} gives an essential, non-unital extension $B\otimes\K\triangleleft E$ satisfying the assumption 1., 2., 3., 4. in the previous section for $U_g\equiv 1$.
    Corollary \ref{yhe} and the above argument give the following commutative diagram
    \[\xymatrix{\mathbb{C}\quad\;\ar@<-1.5 ex>@{=}[d]\ar[r]&B\ar[d]_{KK(i_{(1_B\otimes e_0)})}\ar[r]^{KK(\iota_0)}&A\ar[d]^{\eta}\\
    \ar@<-1.5 ex>@{=}[d]&B\otimes\K\ar[r]&E\\
    \mathbb{C}\quad\;\ar[r]&B\ar[u]^{KK(i_{(1_B\otimes e_0)})}\ar[r]^{KK(\iota)}&A_E\ar[u]^{k_E}}\]
    Thus, the Kirchberg--Phillips theorem gives an isomorphism $\psi : A_E\to A$ with $KK(\psi)=k_E\hat{\otimes}\eta^{-1}$.
    Now we write $\iota_1:=\psi\circ\iota$, $\gamma_g:=\psi\circ\gamma^E_g\circ\psi^{-1}$ for which $KK(\iota_1)=KK(\iota_0)$ holds.
    Theorem \ref{km} shows $A^\gamma=\iota_1(B)$ and the existence of $G$-equivariant conditional expectation $A\to A^\gamma$.
    Since $U_g\equiv 1$, Corollary \ref{yhe} also shows $(A, \gamma)\cong (A_E, \gamma^E)\sim_{KK^G} (E, \id)\sim_{KK^G}(A, \id)$.
    Lemma \ref{out} shows that $\gamma_g, g\not=e$ is outer.
\end{proof}

\begin{remark}\label{ish}
For any countable infinite discrete groups $G_1, G_2$,
one has $l^2(G_1)\cong l^2(G_2)$ and the inclusions 
\[E\subset \mathcal{T}_{H\otimes l^2(G_1)\otimes E}, \quad E\subset\mathcal{T}_{H\otimes l^2(G_2)\otimes E}\]
are the same when we forget about the group actions.
    Thus, our construction actually shows the following.
    
    For a given unital embedding $\iota_0 : B\to A$,
    there exists another unital embedding $\iota_1 : B\to A$ with a conditional expectation $A\to \iota_1(B)$ satisfying
    \begin{enumerate}
        \item $KK(\iota_0)=KK(\iota_1)$,
        \item For any countable infinite discrete group $G$, there exists an action $\gamma$ such that $A^{\gamma}=\iota_1(B)$ and the above conditional expectation is $G$-invariant.
    \end{enumerate}
\end{remark}

\section{Proof of Theorem \ref{fee}}\label{busby}
In this section, the group $G$ is countable infinite discrete amenable, and $(A, \alpha)$ is a unital Kirchberg $G$-algebra.
We write $C_A:=\operatorname{Cone}(1_A)$ for short and denote by $C_\alpha$ the induced $G$-action on $C_A$ (i.e., $({C_\alpha})_g(a(t)):=\alpha_g(a(t))$, $a(t)\in C_A\subset C_0(0, 1]\otimes A$).
\subsection{Finding equivariant extension}
\begin{lem}[{\cite[Lem. 2.2.]{fkk}}]\label{Dad}
    There exists a unital, separable, nuclear $G$-algebra $(C, \sigma)$ with $G$-equivariant embedding 
    \[f : (C_A, C_\alpha)\hookrightarrow (C, \sigma)\]
    providing $KK^G$-equivalence $KK^G(f)\in KK^G((C_A, C_\alpha), (C, \sigma))^{-1}$.
\end{lem}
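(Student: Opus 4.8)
The plan is to establish the slightly more general assertion that \emph{every separable nuclear $G$-algebra $(D,\delta)$ admits a $G$-equivariant embedding into a unital separable nuclear $G$-algebra which is a $KK^G$-equivalence}, and then to specialize to $(D,\delta)=(C_A,C_\alpha)$. The guiding idea is to realize $D$ as an ideal of a unital algebra whose quotient is trivial in $KK^G$, so that the exact triangle furnished by Theorem \ref{wkeq} (available since $G$ is amenable) forces the inclusion to be a $KK^G$-equivalence. The quotient I will use is $(\mathcal{O}_2,\id)$ with the trivial action.

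First I would stabilize. Set $\mathcal{D}:=D\otimes\K$ with the action $\delta\otimes\id_\K$, and let $e\in\K$ be a minimal projection. Then $1_{\M(D)}\otimes e$ is a $(\delta\otimes\id)$-invariant projection of $\M(\mathcal{D})$ with $\overline{\mathcal{D}(1_{\M(D)}\otimes e)\mathcal{D}}=\mathcal{D}$ and corner $(1_{\M(D)}\otimes e)\mathcal{D}(1_{\M(D)}\otimes e)\cong D$, so Lemma \ref{mori} shows that the corner embedding $(D,\delta)\hookrightarrow(\mathcal{D},\delta\otimes\id)$ is a $KK^G$-equivalence. It therefore suffices to embed $(\mathcal{D},\delta\otimes\id)$ into a unital algebra as claimed.

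Next I would attach a copy of $\mathcal{O}_2$ at infinity, exploiting that the action is trivial on the $\K=\K(H)$-factor. Choose isometries $v_1,v_2\in\M(\K)=\mathbb{B}(H)$ with $v_1v_1^*+v_2v_2^*=1$ and put $s_i:=1_{\M(D)}\otimes v_i\in\M(\mathcal{D})$; these are fixed by the canonical extension of $\delta\otimes\id$ to $\M(\mathcal{D})$, since that extension acts by $\id$ on the $\M(\K)$-factor and fixes $1_{\M(D)}$. Hence $s_1,s_2$ generate a unital copy $\mathcal{O}_2:=C^*(s_1,s_2)$ with $1_{\mathcal{O}_2}=1_{\M(\mathcal{D})}$ lying in the fixed-point algebra $\M(\mathcal{D})^{\delta\otimes\id}$. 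Setting $C:=\mathcal{D}+\mathcal{O}_2\subseteq\M(\mathcal{D})$ yields a closed, $G$-invariant, unital, separable, nuclear subalgebra (a sum of a closed ideal and a C*-subalgebra); since $\mathcal{O}_2$ is simple with $1_{\mathcal{O}_2}\notin\mathcal{D}$ one has $\mathcal{O}_2\cap\mathcal{D}=0$, so with $\sigma$ the restricted action we obtain an extension of nuclear $G$-algebras with equivariant maps
\[ 0 \to (\mathcal{D},\ \delta\otimes\id) \to (C,\ \sigma) \xrightarrow{\ \pi\ } (\mathcal{O}_2,\ \id) \to 0, \]
the quotient action being trivial. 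Composing the corner embedding with $\mathcal{D}\hookrightarrow C$ gives the desired $G$-equivariant embedding $f$ into a unital separable nuclear $G$-algebra.

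Finally I would conclude: Theorem \ref{wkeq} turns this extension into an exact triangle $S\mathcal{O}_2\to\mathcal{D}\to C\xrightarrow{\pi}\mathcal{O}_2$ in $KK^G$, and hence the inclusion $\mathcal{D}\hookrightarrow C$ (and so $f$) is a $KK^G$-equivalence \emph{as soon as $(\mathcal{O}_2,\id)$ is isomorphic to $0$ in $KK^G$}. This last point is the main obstacle, and I would handle it by an equivariant Eilenberg--Cuntz swindle: the Cuntz-sum endomorphism $x\mapsto s_1xs_1^*+s_2xs_2^*$ is a unital injective endomorphism of $\mathcal{O}_2$, hence approximately unitarily equivalent to $\id_{\mathcal{O}_2}$ by $\mathcal{O}_2$-uniqueness, giving $I_{(\mathcal{O}_2,\id)}=2\,I_{(\mathcal{O}_2,\id)}$ and therefore $I_{(\mathcal{O}_2,\id)}=0$. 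Because the action on $\mathcal{O}_2$ is trivial, the Cuntz-sum isometries and the implementing unitaries are automatically $G$-fixed, so every step of this computation takes place verbatim in $KK^G$. The only remaining items are the routine verifications that $s_i$ genuinely lie in $\M(\mathcal{D})$ and that $\mathcal{O}_2\cap\mathcal{D}=0$, both of which are immediate.
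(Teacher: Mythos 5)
Your construction is correct in outline and takes a genuinely different route from the paper's. The paper first replaces $(C_A,C_\alpha)$ by $(C_A\otimes p_0\mathcal{O}_\infty p_0,\ C_\alpha\otimes\id)$ (a corner in Cuntz standard form, so that a unital copy of $\mathcal{O}_2$ exists inside $p_0\mathcal{O}_\infty p_0$) and then defines $C:=\pi^{-1}(\mathcal{O}_2)$ inside the unitization $C_A^\sim\otimes p_0\mathcal{O}_\infty p_0$, where $\pi=\pi_A\otimes\id$; you instead stabilize to $C_A\otimes\K$ and adjoin the unital copy $C^*(1\otimes v_1,1\otimes v_2)\cong\mathcal{O}_2$ sitting in $\M(C_A\otimes\K)$, which is indeed fixed by the extended action (your routine verifications check out, as does the application of Lemma \ref{mori} to the full invariant projection $1\otimes e$). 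Both constructions produce an equivariant extension with quotient $(\mathcal{O}_2,\id)$ and both then rest on the same key fact $(\mathcal{O}_2,\id)\sim_{KK^G}0$. One structural remark: your appeal to Theorem \ref{wkeq} is legitimate ($G$ is amenable in this section) but is more machinery than you need, since your extension is equivariantly split by the copy of $\mathcal{O}_2$ itself — exactly as the paper's is split via the unitization — so split exactness of $KK^G$ already yields the exact triangle and the two-out-of-three conclusion; this would also free your more general statement from any amenability/Baum--Connes input.

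The one step that does not hold up as written is the last one: from ``$\psi:=s_1(\cdot)s_1^*+s_2(\cdot)s_2^*$ is approximately unitarily equivalent to $\id_{\mathcal{O}_2}$'' you conclude $KK(\psi)=KK(\id)$. Approximate unitary equivalence does not imply equality of $KK$-classes as a general principle; it only gives equality in $KL$ (the difference of the classes is a phantom element, lying in the closure of $0$), and there exist approximately unitarily equivalent homomorphisms with distinct $KK$-classes — this is precisely the phenomenon that forced the introduction of $KL$. So the swindle $I_{\mathcal{O}_2}=2I_{\mathcal{O}_2}$ is not established by this route, and invoking the principle in this particular instance is essentially circular. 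The fact you need, $KK(\id_{\mathcal{O}_2})=0$, is of course standard ($\mathcal{O}_2$ lies in the bootstrap class and $K_*(\mathcal{O}_2)=0$, so the UCT gives $KK(\mathcal{O}_2,\mathcal{O}_2)=0$), and the paper itself uses $(\mathcal{O}_2,\id)\sim_{KK^G}0$ without proof, so citing it would be acceptable. Alternatively your swindle can be repaired by upgrading approximate unitary equivalence to genuine homotopy: every unital endomorphism $\rho$ of $\mathcal{O}_2$ satisfies $\rho(s_i)=u_\rho s_i$ with $u_\rho:=\sum_i\rho(s_i)s_i^*\in U(\mathcal{O}_2)$, and since $U(\mathcal{O}_2)$ is connected ($\mathcal{O}_2$ is unital purely infinite simple with $K_1(\mathcal{O}_2)=0$), a norm-continuous path from $1$ to $u_\psi$ gives a pointwise-continuous path of unital endomorphisms from $\id$ to $\psi$; homotopy invariance of $KK$ then yields $I=KK(\psi)=2I$, hence $I=0$. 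Passing to $KK^G$ is then immediate via the trivial-action functor of Sec. \ref{prel1}, which is the clean formulation of your ``verbatim'' remark.
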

\begin{proof}
Let $\{S_n\}_{n=1}^\infty$ be the generating isometries of $\mathcal{O}_\infty$ with mutually orthogonal ranges.
    Let $p_0:=1-S_2S_2^*, p_1:=S_1S_1^*<p_0$ be the projections in $\mathcal{O}_\infty$.
Consider a $G$-equivariant embedding
\[f_1 : (C_A, C_\alpha)\ni x\mapsto x\otimes p_1\in (C_A\otimes (p_0\mathcal{O}_\infty p_0), C_\alpha\otimes\id).\]
    By the following diagram
    \[\xymatrix{
    (C_A, C_\alpha)\ar[dr]^{\id\otimes 1}\ar[r]^{-\otimes p_1\quad\quad\quad}&(C_A\otimes (p_1\mathcal{O}_\infty p_1), C_\alpha\otimes\id)\ar[r]^{Lem. \ref{mori}}&(C_A\otimes(p_0\mathcal{O}_\infty p_0), C_\alpha\otimes\id)\\
    &(C_A\otimes\mathcal{O}_\infty, C_\alpha\otimes\id),\ar[u]^{\ad (1\otimes S_1)}&
    }\]
    one can check that the $G$-equivariant embedding $(C_A, C_\alpha)\xrightarrow{f_1}(C_A\otimes(p_0\mathcal{O}_\infty p_0), C_\alpha\otimes\id)$ gives a $KK^G$-equivalence.
Since $p_0\mathcal{O}_\infty p_0$ is the Cuntz standard form,
there exists a unital embedding $\mathcal{O}_2\hookrightarrow p_0\mathcal{O}_\infty p_0$.
    By tensoring $(p_0\mathcal{O}_\infty p_0, \id)$ with the split exact sequence $C_A\to C_A^\sim\xrightarrow{\pi_A} \C$ of unitization, we have the following commutative diagram of the split exact sequences
    \[\xymatrix{
    (C_A\otimes(p_0\mathcal{O}_\infty p_0), C_\alpha\otimes\id)\ar[r]&((C_A^\sim)\otimes (p_0\mathcal{O}_\infty p_0), C_\alpha^\sim\otimes\id)\ar[r]^{\quad\quad\quad\pi}&  ((p_0\mathcal{O}_\infty p_0), \id)\\
    (C_A\otimes(p_0\mathcal{O}_\infty p_0), C_\alpha\otimes\id)\ar[r]\ar@{=}[u]&(\pi^{-1}(\mathcal{O}_2), C_\alpha^\sim\otimes\id)\ar[r]\ar[u]&(\mathcal{O}_2, \id)\ar@{^{(}->}[u]
    }\]
where $C_\alpha^\sim$ is the canonically induced action on $C_A^\sim$ and $\pi:=\pi_A\otimes\id_{p_0\mathcal{O}_\infty p_0} : C_A^\sim\otimes (p_0\mathcal{O}_\infty p_0)\to p_0\mathcal{O}_\infty p_0$.
Define $(C, \sigma):=(\pi^{-1}(\mathcal{O}_2), C_\alpha^\sim\otimes\id)$ so that $(C, \sigma)$ is a unital separable nuclear $G$-algebra.
    Since $(\mathcal{O}_2, \id)\sim_{KK^G}0$,
    the equivariant inclusion 
    \[f : (C_A, C_\alpha)\xrightarrow{f_1}(C_A\otimes(p_0\mathcal{O}_\infty p_0),  C_\alpha\otimes\id)\to (C, \sigma)\]
    is a $KK^G$-equivalence.
\end{proof}

Let $P_\infty$ be the unital Kirchberg algebra satisfying $P_\infty\sim_{KK} S$.
Let $\K\to\mathcal{T}_0\to P_\infty$ be a unital essential extension satisfying $\mathcal{T}_0\sim_{KK}0$.
The existence of $\mathcal{T}_0$ follows from Lemma \ref{ade}.
One has $(\mathcal{T}_0, \id)\sim_{KK^G} 0$.

In the previous sections,
we have used extension $\K\to E\to {^{"}SC_A^"}$ to construct an ergodic action of $A$.
In this section, we will find an extension $\mathcal{O}_\infty\otimes\K\to \mathcal{E}\to P_\infty\otimes C\sim_{KK^G}SC_A$ by using $C$ in the above lemma.

We write $(\mathcal{O}_\infty^s, \id\otimes\lambda):=(\K\otimes\mathcal{O}_\infty, \id_\K\otimes\lambda)$ for short.
By tensoring $(\K_G\otimes\mathcal{O}_\infty^s, \rho\otimes(\id\otimes\lambda))$ with the extension $\K\to\mathcal{T}_0\to P_\infty$,
we consider the exact triangle
\[\xymatrix{
S(P_\infty\otimes \K_G\otimes\mathcal{O}_\infty^s, \id\otimes \rho\otimes\id\otimes\lambda)\ar@{-->}[r]^{\beta_1}&(\K\otimes \K_G\otimes \mathcal{O}_\infty^s, \id\otimes \rho\otimes (\id \otimes\lambda))-\\
\to(\mathcal{T}_0\otimes\K_G\otimes\mathcal{O}_\infty^s, \id\otimes\rho\otimes (\id\otimes\lambda))\ar@<+0.8 ex>[r]^{\pi_{\mathcal{T}_0\otimes\K_G\otimes\mathcal{O}_\infty^s}}&(P_\infty\otimes \K_G\otimes \mathcal{O}_\infty^s, \id\otimes\rho\otimes(\id\otimes\lambda)), 
}\]
where \[\beta_1:=KK^G(\iota(\mathcal{T}_0\otimes\K_G\otimes\mathcal{O}_\infty^s))\hat{\otimes}KK^G( j(\mathcal{T}_0\otimes\K_G\otimes\mathcal{O}_\infty^s))^{-1}\] is well-defined by Theorem \ref{wkeq} and makes the following diagram commute
\[\xymatrix{SP_\infty\otimes\K_G\otimes\mathcal{O}_\infty^s\ar@{-->}[r]^{\beta_1}\ar[dr]_{\iota({\mathcal{T}_0\otimes\K_G\otimes\mathcal{O}_\infty^s})\quad\quad\quad}&\K\otimes\K_G\otimes\mathcal{O}_\infty^s\ar[d]^{j({\mathcal{T}_0\otimes\K_G\otimes\mathcal{O}_\infty^s})}\\
&\operatorname{Cone}(\pi_{\mathcal{T}_0\otimes\K_G\otimes\mathcal{O}_\infty^s}).
}\]

Since $(\mathcal{T}_0\otimes\K_G\otimes\mathcal{O}_\infty^s, \id\otimes\rho\otimes\id\otimes\lambda)\sim_{KK^G}(\mathcal{T}_0, \id)\sim_{KK^G}0$,
 $\beta_1$ is a $KK^G$-equivalence.
where $j({\mathcal{T}_0\otimes\K_G\otimes\mathcal{O}_\infty^s})$ is a $KK^G$-equivalence by Theorem \ref{wkeq}.
Let \[\beta_2\in KK(\C, SP_\infty)^{-1}\subset KK^G((\C, \id), (SP_\infty, \id))^{-1}\] be a non-equivariant Bott element.
Recall the $KK^G$-equivalence $\mu_\C\in KK^G((\C, \id), (\K_G, \rho))^{-1}$ appearing right before Lemma \ref{mori}. 
By the suspension isomorphism $KK^G((C, \sigma), (\mathcal{O}_\infty^s, \id\otimes\lambda))\cong KK^G((SP_\infty\otimes C, \id\otimes\sigma), (SP_\infty\otimes\mathcal{O}_\infty^s, \id\otimes(\id\otimes\lambda)))$ and the diagram
\[\xymatrix{
&C_A\ar[r]^{KK^G(ev_1)}\ar[d]^{KK^G(f)}&\C\ar[d]^{KK^G(1_{\mathcal{O}_\infty})}\\
&C\ar[d]^{\beta_2\otimes I_C}&\mathcal{O}_\infty\ar[d]^{KK^G(i_{e\otimes e}\otimes \id_{\mathcal{O}_\infty})}\\
&SP_\infty\otimes C\ar[dl]^{\mu_\C}\ar@{-->}[d]^{I_{SP_\infty}\otimes KK^G(\varphi, u)}&\K\otimes\mathcal{O}_\infty^s\ar[d]^{\mu_\C}\\
SP_\infty\otimes\K_G\otimes C\ar@{-->}[d]_{I_{SP_\infty}\otimes KK^G(\ad W^*\circ(\id\otimes\varphi))}&SP_\infty\otimes\mathcal{O}_\infty^s\ar[dl]^{\mu_\C}&\K\otimes\K_G\otimes\mathcal{O}_\infty^s\ar@{=}[d]\\
SP_\infty\otimes\K_G\otimes\mathcal{O}_\infty^s\ar@{-}[r]&\ar[r]^{\beta_1\quad\quad\quad\quad\quad\quad\quad\quad\quad}&\K\otimes\K_G\otimes\mathcal{O}_\infty^s,
\quad \cdots(*)}\]
Theorem \ref{yeah} gives a cocycle embedding
\[(\varphi, u_\cdot) : C\hookrightarrow \mathcal{O}_\infty^s,\quad u_g\in\M(\mathcal{O}_\infty^s)\]
making the right large circuit commutes,
and Lemma \ref{IM} gives a unitary $W\in \M(\K_G\otimes\mathcal{O}_\infty^s)$ and $G$-equivariant embedding
\[\Phi:=\ad W^*\circ (\id\otimes\varphi) : \K_G\otimes C\hookrightarrow\K_G\otimes\mathcal{O}_\infty^s\]
making the left square commute.
We write
\[k_1:=KK^G(f)\hat{\otimes}(\beta_2\otimes I_C)\hat{\otimes}\mu_{\C}\in KK^G((C_A, C_\alpha), (SP_\infty\otimes\K_G\otimes C, \id_{SP_\infty}\otimes\rho\otimes\sigma))^{-1},\]
\[k_2:=KK^G(1_{\mathcal{O}_\infty})\hat{\otimes}KK^G(i_{e\otimes e})\hat{\otimes} \mu_{\C}\in KK^G((\C, \id), (\K\otimes\K_G\otimes\mathcal{O}_\infty^s, \id\otimes\rho\otimes(\id\otimes\lambda)))^{-1}.\]
We obtain a "Busby invariant" 
\[\id_{P_\infty}\otimes\Phi : P_\infty\otimes \K_G\otimes C \hookrightarrow P_\infty\otimes \K_G\otimes\mathcal{O}_\infty^s\]
and an induced essential extension
\[\mathcal{E}:=\pi_{\mathcal{T}_0\otimes\K_G\otimes\mathcal{O}_\infty^s}^{-1}(P_\infty\otimes \Phi(\K_G\otimes C))=\K\otimes(\K_G\otimes\mathcal{O}_\infty^s)+\mathcal{T}_0\otimes\Phi(\K_G\otimes C),\]
\[\K\otimes\K_G\otimes\mathcal{O}_\infty^s\xrightarrow{l_\mathcal{E}} \mathcal{E}\xrightarrow{\pi_\mathcal{E}}P_\infty\otimes \K_G\otimes\mathcal{O}_\infty^s.\]
\begin{lem}\label{appup}
    The algebra $\mathcal{E}$ has an approximate unit $\{p_n\}_{n=1}^\infty$ consisting of increasing sequence of projections $p_n<p_{n+1}$.
\end{lem}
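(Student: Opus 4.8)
The plan is to exhibit the approximate unit by combining lifts of an approximate unit of the quotient with projections pulled from the ideal. I would first record the structure $\mathcal{E}=J+D$, where $J:=\K\otimes\K_G\otimes\mathcal{O}_\infty^s$ is the essential ideal $\ker\pi_\mathcal{E}$ and $D:=\mathcal{T}_0\otimes\Phi(\K_G\otimes C)$. Since $\mathcal{T}_0$ and $C$ are unital and $\K_G=\K(l^2(\mathbb{N})\otimes l^2(G))$ has an increasing approximate unit of finite-rank projections $f_n<f_{n+1}$, the elements $\tilde q_n:=1_{\mathcal{T}_0}\otimes\Phi(f_n\otimes 1_C)$ are well-defined projections lying in $D\subset\mathcal{E}$ (because $f_n\otimes 1_C\in\K_G\otimes C$ and $\Phi$ is a $*$-homomorphism). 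As $f_n\otimes 1_C$ is an approximate unit of $\K_G\otimes C$, the $\tilde q_n$ increase and satisfy $\tilde q_n d\to d$ for $d\in D$.

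Next I would analyse the strict limit $P:=\operatorname{s-lim}_n\Phi(f_n\otimes 1_C)\in\M(\K_G\otimes\mathcal{O}_\infty^s)$, a projection with $P\Phi(c)=\Phi(c)$ for all $c\in\K_G\otimes C$ (since $\Phi((f_n\otimes 1_C)c)\to\Phi(c)$). Recalling $1_{\mathcal{T}_0}=1_{\M(\K)}$, the corresponding limit $Q_\infty:=1_{\mathcal{T}_0}\otimes P=\operatorname{s-lim}_n\tilde q_n\in\M(J)$ has complement $Q_\infty^{\perp}:=1_{\M(J)}-Q_\infty=1_{\mathcal{T}_0}\otimes P^{\perp}$. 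The two structural facts I would extract are $Q_\infty d=d$ for all $d\in D$ (hence $Q_\infty^{\perp}D=0$), and $\tilde q_n x\to Q_\infty x$ for all $x\in J$. Thus the lifts $\tilde q_n$ already exhaust $D$ and the corner $Q_\infty J Q_\infty$, but they miss the remainder of the ideal.

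To cover that remainder I would invoke that $J=\K\otimes\K_G\otimes\mathcal{O}_\infty^s$ has real rank zero (as $\mathcal{O}_\infty^s=\K\otimes\mathcal{O}_\infty$ is stable and purely infinite, and real rank zero is preserved by tensoring with $\K$). Consequently the hereditary subalgebra $Q_\infty^{\perp}J Q_\infty^{\perp}=\K\otimes P^{\perp}(\K_G\otimes\mathcal{O}_\infty^s)P^{\perp}$ has real rank zero and, being separable, admits an increasing approximate unit of projections $h_n<h_{n+1}$ with $h_n\le Q_\infty^{\perp}$. Since $h_n\le Q_\infty^{\perp}\perp Q_\infty\ge\tilde q_m$, each $h_n$ is orthogonal to every $\tilde q_m$, so $p_n:=\tilde q_n+h_n$ is a projection in $\mathcal{E}$, and $p_n<p_{n+1}$ (after discarding repetitions) because the two summands increase inside orthogonal corners. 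To finish I would verify $p_n x\to x$ on $\mathcal{E}$: for $d\in D$ one has $h_n d=0$ (as $Q_\infty^{\perp}d=0$), whence $p_n d=\tilde q_n d\to d$; for $x\in J$ I would split $x$ across $Q_\infty,Q_\infty^{\perp}$, handling the diagonal pieces by $\tilde q_n$ and $h_n$ and the off-diagonal pieces by the one-sided strict convergences $\tilde q_n\to Q_\infty$ and $h_n\to Q_\infty^{\perp}$, the latter obtained from the estimate $\|(Q_\infty^{\perp}-h_n)c\|^2\le\|(Q_\infty^{\perp}-h_n)cc^{*}\|$ with $c=Q_\infty^{\perp}x$. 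Passing to adjoints yields $xp_n\to x$ as well.

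I expect the main obstacle to be precisely this last assembly. The lifts $\tilde q_n$ exhaust only the $P$-corner of the ideal, so one is forced to adjoin ideal projections, and the delicate point is to place them in the complementary corner $Q_\infty^{\perp}J Q_\infty^{\perp}$ so that they are simultaneously orthogonal to the lifts, inert on the $D$-direction (where $Q_\infty^{\perp}$ vanishes), and strictly convergent to $Q_\infty^{\perp}$ so as to absorb the off-diagonal ideal terms; keeping these three requirements compatible while maintaining a single increasing sequence of projections is where the care is needed.
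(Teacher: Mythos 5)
Your overall strategy (write $\mathcal{E}=J+D$ with $J=\K\otimes\K_G\otimes\mathcal{O}_\infty^s$, use projections coming from the unital lift $D$ to exhaust one corner, and use real-rank-zero projections of the complementary corner of the ideal) is in essence the paper's argument, but as written it has one genuine gap: you never justify the existence of the strict limit $P$ of the increasing sequence $\Phi(f_n\otimes 1_C)$ in $\M(\K_G\otimes\mathcal{O}_\infty^s)$, and this is not a formality. An increasing sequence of projections in a C*-algebra need not converge strictly: in $\K_G\otimes\mathcal{O}_\infty^s\cong\K\otimes\mathcal{O}_\infty$ the projections $g_n:=e_{11}\otimes\sum_{k=1}^{n}S_kS_k^*$ (with $e_{11}\in\K$ a minimal projection and $S_k$ the Cuntz isometries) are increasing, yet $g_n\,(e_{11}\otimes 1_{\mathcal{O}_\infty})$ is not norm-Cauchy, so $(g_n)$ has no strict limit. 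Moreover your final assembly genuinely needs this convergence: for $x\in J$ you require $\|(Q_\infty-\tilde q_n)x\|\to 0$, i.e.\ exactly the one-sided strict convergence $\tilde q_n\to Q_\infty$, and no choice of projections $h_n$ orthogonal to all $\tilde q_m$ can compensate if it fails. So the existence of $P$ must be extracted from the specific form of $\Phi$, not from monotonicity of the $\Phi(f_n\otimes 1_C)$.

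The fix is short and available in the paper: by construction $\Phi=\ad W^*\circ(\id_{\K_G}\otimes\varphi)$ for a fixed unitary $W\in\M(\K_G\otimes\mathcal{O}_\infty^s)$, so $\Phi(f_n\otimes 1_C)=W^*(f_n\otimes\varphi(1_C))W$; since $f_n\to 1_{\M(\K_G)}$ strictly and $\varphi(1_C)$ is a fixed projection, this converges strictly to $P=W^*(1_{\M(\K_G)}\otimes\varphi(1_C))W$. With that line inserted, the rest of your proof (orthogonality $\tilde q_m\perp h_n$, the hereditary estimate $\|(Q_\infty^{\perp}-h_n)c\|^{2}\le\|(Q_\infty^{\perp}-h_n)cc^{*}\|$, and the splitting of $x\in J$) is correct. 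Note that this is precisely where the paper's proof differs: it conjugates the whole algebra by $1_{\M(\K)}\otimes W$ at the outset, obtaining $\mathcal{E}\cong\K_G\otimes(\K\otimes\mathcal{O}_\infty^s+\mathcal{T}_0\otimes\varphi(C))$, after which the lift part is dominated by the single projection $r=1_{\mathcal{T}_0}\otimes\varphi(1_C)$ lying \emph{inside} the algebra; no strict limits are then needed, and only the corner $(1-r)(\K\otimes\mathcal{O}_\infty^s)(1-r)\cong\K\otimes\mathcal{O}_\infty$ of the ideal requires an approximate unit of projections, which is the same role your $h_n$ play.
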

\begin{proof}
    By the definition of $\Phi$,
    one has \[\mathcal{E}\cong \ad(1_{\M(\K)}\otimes W)(\mathcal{E})=\K\otimes\K_G\otimes\mathcal{O}_\infty^s+\mathcal{T}_0\otimes\K_G\otimes \varphi(C)\cong\K_G\otimes(\K\otimes\mathcal{O}_\infty^s+\mathcal{T}_0\otimes\varphi(C)).\]
    Thus, it is enough to show that $\K\otimes\mathcal{O}_\infty^s+\mathcal{T}_0\otimes\varphi(C)$ has an approximate unit consisting of projections.
    We write $r:=1_{\mathcal{T}_0}\otimes\varphi(1_C)\in\M(\K\otimes\mathcal{O}_\infty^s)$.
    Since $1_{\M(\mathcal{O}_\infty^s)}-\varphi(1_C)\not=0$, one has $1_{\M(\K\otimes\mathcal{O}_\infty^s)}-r\not=0$.
    So $(1-r)\K\otimes\mathcal{O}_\infty^s(1-r)\cong \K\otimes\mathcal{O}_\infty$ holds and there is an approximate unit $\{r_n\}_{n=1}^\infty\subset (1-r)\K\otimes\mathcal{O}_\infty^s(1-r)$ consisting of increasing sequence of projections $r_n<r_{n+1}$.
    The elements $p_n:=r+r_n\in\mathcal{E}$ give the desired approximate unit.
\end{proof}
Since $\Phi$ is $G$-equivariant,
$(\mathcal{E}, \id_{\mathcal{T}_0}\otimes\rho\otimes(\id\otimes\lambda)|)$ is a well-defined separable nuclear $G$-subalgebra of $(\mathcal{T}_0\otimes(\K_G\otimes\mathcal{O}_\infty^s), \id_{\mathcal{T}_0}\otimes (\rho\otimes (\id\otimes\lambda)))$.
By Theorem \ref{wkeq},
one has an exact triangle
\[SP_\infty\otimes\K_G\otimes C\xrightarrow{\beta_3}\K\otimes\K_G\otimes\mathcal{O}_\infty^s\xrightarrow{l_\mathcal{E}}\mathcal{E}\xrightarrow{\pi_\mathcal{E}}P_\infty\otimes\K_G\otimes C,\]
where $\beta_3$ is defined by Theorem \ref{wkeq} and the following diagram
\[\xymatrix{
SP_\infty\otimes\K_G\otimes C\ar@{-->}[r]^{\beta_3}\ar[dr]^{\iota({\mathcal{E}})}&\K\otimes\K_G\otimes\mathcal{O}_\infty^s\ar[d]^{j({\mathcal{E})}}\\
&\operatorname{Cone}(\pi_\mathcal{E}).
}\]
\begin{lem}
    The following diagram commutes in $KK^G$
    \[\xymatrix{
    C_A\ar[r]^{KK^G(ev_1)}\ar[d]^{k_1}&\C\ar[d]^{k_2}\\
    SP_\infty\otimes\K_G\otimes C\ar[r]^{\beta_3}&\K\otimes\K_G\otimes\mathcal{O}_\infty^s.
    }\]
\end{lem}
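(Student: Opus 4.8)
The plan is to read the required identity $k_1\hat{\otimes}\beta_3=KK^G(ev_1)\hat{\otimes}k_2$ directly off the big diagram $(*)$, whose left square and right large circuit already commute by construction. The only ingredient of $(*)$ that is not yet matched to the statement is the passage from the boundary map $\beta_1$ of the extension $\K\otimes\K_G\otimes\mathcal{O}_\infty^s\to\mathcal{T}_0\otimes\K_G\otimes\mathcal{O}_\infty^s\to P_\infty\otimes\K_G\otimes\mathcal{O}_\infty^s$ to the boundary map $\beta_3$ of the extension $\mathcal{E}$. So the first thing I would isolate, writing $\Phi=\ad W^*\circ(\id\otimes\varphi)$, is the naturality relation
\[
\beta_3=(I_{SP_\infty}\otimes KK^G(\Phi))\hat{\otimes}\beta_1,
\]
which I call $(\star)$. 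Granting $(\star)$, the lemma is a formal consequence of the commutativities in $(*)$.

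To prove $(\star)$ I would use that $\mathcal{E}$ is, by its very definition $\mathcal{E}=\pi_{\mathcal{T}_0\otimes\K_G\otimes\mathcal{O}_\infty^s}^{-1}(P_\infty\otimes\Phi(\K_G\otimes C))$, the pullback of the $\mathcal{T}_0$-extension along the injective $G$-equivariant $*$-homomorphism $\id_{P_\infty}\otimes\Phi:P_\infty\otimes\K_G\otimes C\hookrightarrow P_\infty\otimes\K_G\otimes\mathcal{O}_\infty^s$. This produces a morphism of extensions that is the identity on the common ideal $\K\otimes\K_G\otimes\mathcal{O}_\infty^s$, the inclusion $\mathcal{E}\hookrightarrow\mathcal{T}_0\otimes\K_G\otimes\mathcal{O}_\infty^s$ on the total algebras, and $\id_{P_\infty}\otimes\Phi$ on the quotients. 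Passing to mapping cones, I would check on the nose that the induced $G$-equivariant map $\Psi:\operatorname{Cone}(\pi_\mathcal{E})\to\operatorname{Cone}(\pi_{\mathcal{T}_0\otimes\K_G\otimes\mathcal{O}_\infty^s})$, $(a(t),b)\mapsto((\id_{P_\infty}\otimes\Phi)(a(t)),b)$, satisfies $\Psi\circ j(\mathcal{E})=j(\mathcal{T}_0\otimes\K_G\otimes\mathcal{O}_\infty^s)$ and $\Psi\circ\iota(\mathcal{E})=\iota(\mathcal{T}_0\otimes\K_G\otimes\mathcal{O}_\infty^s)\circ S(\id_{P_\infty}\otimes\Phi)$. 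Since Theorem \ref{wkeq} (this is where amenability of $G$ is used) makes both $KK^G(j(\mathcal{E}))$ and $KK^G(j(\mathcal{T}_0\otimes\K_G\otimes\mathcal{O}_\infty^s))$ invertible, eliminating $KK^G(\Psi)$ between these two relations and recalling $\beta_i=KK^G(\iota(-))\hat{\otimes}KK^G(j(-))^{-1}$ yields $(\star)$; equivalently, this is the weak functoriality of Lemma \ref{benri} applied to the induced morphism of exact triangles, whose commuting left-hand square is exactly $(\star)$.

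With $(\star)$ in hand I would finish by chasing $(*)$. Expanding $k_1=KK^G(f)\hat{\otimes}(\beta_2\otimes I_C)\hat{\otimes}\mu_{\C}$ and substituting $(\star)$ gives
\[
k_1\hat{\otimes}\beta_3=KK^G(f)\hat{\otimes}(\beta_2\otimes I_C)\hat{\otimes}\big(\mu_{\C}\hat{\otimes}(I_{SP_\infty}\otimes KK^G(\Phi))\big)\hat{\otimes}\beta_1.
\]
The left square of $(*)$, which is Lemma \ref{IM} tensored with $I_{SP_\infty}$, rewrites $\mu_{\C}\hat{\otimes}(I_{SP_\infty}\otimes KK^G(\Phi))$ as $(I_{SP_\infty}\otimes KK^G(\varphi,u))\hat{\otimes}\mu_{\C}$; since $\beta_2$ and $KK^G(\varphi,u)$ act on disjoint tensor legs, $(\beta_2\otimes I_C)\hat{\otimes}(I_{SP_\infty}\otimes KK^G(\varphi,u))=\beta_2\otimes KK^G(\varphi,u)$, so that
\[
k_1\hat{\otimes}\beta_3=KK^G(f)\hat{\otimes}(\beta_2\otimes KK^G(\varphi,u))\hat{\otimes}\mu_{\C}\hat{\otimes}\beta_1.
\]
The right-hand side is precisely the left leg of the right large circuit of $(*)$, which commutes by the choice of $(\varphi,u)$ via Theorem \ref{yeah}; its right leg equals $KK^G(ev_1)\hat{\otimes}KK^G(1_{\mathcal{O}_\infty})\hat{\otimes}KK^G(i_{e\otimes e})\hat{\otimes}\mu_{\C}=KK^G(ev_1)\hat{\otimes}k_2$. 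This gives $k_1\hat{\otimes}\beta_3=KK^G(ev_1)\hat{\otimes}k_2$, as required.

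The hard part will be $(\star)$: one must verify that the pullback description of $\mathcal{E}$ genuinely yields a morphism of extensions compatible with the chosen representatives $\iota(-),j(-)$ of the boundary maps, and that Theorem \ref{wkeq} applies $G$-equivariantly so that the relevant $j$-maps are invertible in $KK^G$. Once $(\star)$ is secured, the remaining diagram chase through $(*)$ is purely formal.
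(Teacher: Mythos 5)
Your proposal is correct and follows essentially the same route as the paper: your map $\Psi$ is exactly the paper's cone morphism $\theta:(c(t),d)\mapsto((\id_{P_\infty}\otimes\Phi)(c(t)),d)$, your relation $(\star)$, namely $\beta_3=(I_{SP_\infty}\otimes KK^G(\Phi))\hat{\otimes}\beta_1$, is precisely the intermediate identity the paper extracts from the commuting cone diagram together with the invertibility of the $j$-maps from Theorem \ref{wkeq}, and the concluding chase through the diagram $(*)$ is the paper's final step spelled out in more detail.
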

\begin{proof}
    There is a natural map
    \[\theta : \operatorname{Cone}(\pi_\mathcal{E})\ni (c(t), d)\mapsto (\id_{P_\infty}\otimes\Phi(c(t)), d)\in\operatorname{Cone}(\pi_{\mathcal{T}_0\otimes\K_G\otimes\mathcal{O}_\infty^s})\]
    satisfying
    \[\theta\circ j(\mathcal{E})(x)=(0, x)=j({\mathcal{T}_0\otimes\K_G\otimes\mathcal{O}_\infty^s})(x),\quad x\in\K\otimes\K_G\otimes\mathcal{O}_\infty^s.\]
    Thus,
    one has the following commutative diagram
\[\xymatrix{
&\K\otimes\K_G\otimes\mathcal{O}_\infty^s\ar[d]^{KK^G(j(\mathcal{E}))}\\
SP_\infty\otimes\K_G\otimes C\ar[d]_{S\id_{P_\infty}\otimes\Phi}\ar[ur]^{\beta_3}\ar[r]&\operatorname{Cone}(\pi_\mathcal{E})\ar[d]^{KK^G(\theta)}\\
SP_\infty\otimes\K_G\otimes\mathcal{O}_\infty^s\ar[r]\ar[dr]^{\beta_1}&\operatorname{Cone}(\pi_{\mathcal{T}_0\otimes\K_G\otimes\mathcal{O}_\infty^s})\\
&\K\otimes\K_G\otimes\mathcal{O}_\infty^s.\ar[u]_{KK^G(j({\mathcal{T}_0\otimes\K_G\otimes\mathcal{O}_\infty^s}))}
}\]
By the diagram $(*)$ and definition of $k_1, k_2$, we have
\[k_1\hat{\otimes}\beta_3=k_1\hat{\otimes}(I_{SP_\infty}\otimes KK^G(\Phi))\hat{\otimes}\beta_1=KK^G(ev_1)\hat{\otimes}k_2.\]
\end{proof}
Applying Lemma \ref{benri} for the exact triangles
\[\xymatrix{
S\C\ar[r]^{-KK^G(S1_A)}\ar[d]^{Sk_2}&SA\ar[r]&C_A\ar[r]^{KK^G(ev_1)}\ar[d]^{k_1}&\C\ar[d]^{k_2}\\
S\K\otimes\K_G\otimes\mathcal{O}_\infty^s\ar@<+0.8 ex>[r]^{-KK^G(Sl_\mathcal{E})}&S\mathcal{E}\ar@<+0.8 ex>[r]^{-KK^G(S\pi_\mathcal{E})}&SP_\infty\otimes\K_G\otimes C\ar[r]^{\beta_3}&\K\otimes\K_G\otimes\mathcal{O}_\infty^s,
}\]
we have the following corollary.
\begin{cor}
There exists a $KK^G$-equivalence \[k_3\in KK^G((A, \alpha), (\mathcal{E}, \id_{\mathcal{T}_0}\otimes\rho\otimes (\id\otimes\lambda)))^{-1}\]
making the following diagram commute
\[\xymatrix{
(\C, \id)\ar[r]^{KK^G(1_A)}\ar[d]^{k_2}&(A, \alpha)\ar[d]^{k_3}\\
(\K\otimes\K_G\otimes\mathcal{O}_\infty^s, \id\otimes\rho\otimes(\id\otimes\lambda))\ar[r]&(\mathcal{E}, \id\otimes\rho\otimes(\id\otimes\lambda)).
}\]
\end{cor}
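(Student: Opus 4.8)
The plan is to obtain $k_3$ directly from the weak functoriality of exact triangles, Lemma \ref{benri}, applied to the commutative diagram of exact triangles displayed immediately above the corollary. First I would identify the roles of the two rows in the template of Lemma \ref{benri}. The top row $S\C \to SA \to C_A \xrightarrow{KK^G(ev_1)} \C$ is the rotation of the mapping-cone triangle $SA \to C_A \xrightarrow{ev_1} \C \xrightarrow{1_A} A$ attached to the unit $1_A$, and the bottom row $S\K\otimes\K_G\otimes\mathcal{O}_\infty^s \to S\mathcal{E} \to SP_\infty\otimes\K_G\otimes C \xrightarrow{\beta_3} \K\otimes\K_G\otimes\mathcal{O}_\infty^s$ is the suspension of the extension triangle $SP_\infty\otimes\K_G\otimes C \xrightarrow{\beta_3} \K\otimes\K_G\otimes\mathcal{O}_\infty^s \xrightarrow{l_\mathcal{E}} \mathcal{E} \xrightarrow{\pi_\mathcal{E}} P_\infty\otimes\K_G\otimes C$ furnished by Theorem \ref{wkeq}. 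In the notation of Lemma \ref{benri} the object sitting in the ``cone'' slot is $SA$ in the top row and $S\mathcal{E}$ in the bottom row, so the dotted arrow to be produced is a morphism $SA \to S\mathcal{E}$, and the outer vertical maps are $\xi_B = k_1$ and $\xi_A = k_2$.

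Then I would verify the hypothesis of Lemma \ref{benri}, namely that the rightmost fully-solid square commutes. This square asserts exactly $k_1 \hat{\otimes} \beta_3 = KK^G(ev_1) \hat{\otimes} k_2$, which is the conclusion of the preceding lemma. Granting it, Lemma \ref{benri} supplies a morphism $\kappa \in KK^G((SA, S\alpha), (S\mathcal{E}, S(\id_{\mathcal{T}_0}\otimes\rho\otimes(\id\otimes\lambda))))$ in the cone slot making the entire diagram commute; moreover, since the outer verticals $\xi_A = k_2$ and $\xi_B = k_1$ are both $KK^G$-equivalences, the lemma guarantees that $\kappa$ is a $KK^G$-equivalence as well. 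Using the suspension isomorphism $KK^G((SA, S\alpha), (S\mathcal{E}, \cdots)) \cong KK^G((A, \alpha), (\mathcal{E}, \cdots))$, I would desuspend $\kappa = Sk_3$ to obtain the asserted $KK^G$-equivalence $k_3 \in KK^G((A, \alpha), (\mathcal{E}, \id_{\mathcal{T}_0}\otimes\rho\otimes(\id\otimes\lambda)))^{-1}$.

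Finally I would extract the required commutativity from the leftmost square of the now-commutative diagram, which reads $Sk_2 \hat{\otimes}(-KK^G(Sl_\mathcal{E})) = (-KK^G(S1_A))\hat{\otimes} Sk_3$; desuspending and cancelling the two signs yields $k_2 \hat{\otimes} KK^G(l_\mathcal{E}) = KK^G(1_A)\hat{\otimes} k_3$, which is precisely the square claimed in the corollary. I do not expect a genuine difficulty here; the only points requiring care are the bookkeeping of the rotations and of the minus signs so that the two rows are presented in the exact orientation demanded by Lemma \ref{benri}, together with the observation that the produced arrow lives one suspension above $A$, so that $k_3$ is recovered only after invoking the suspension isomorphism.
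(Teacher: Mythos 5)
Your proposal is correct and is essentially the paper's own argument: the paper obtains $k_3$ in exactly this way, by applying Lemma \ref{benri} to the displayed diagram of rotated exact triangles, with the rightmost solid square commuting by the preceding lemma, the dotted arrow landing in the cone slot as a map $SA\to S\mathcal{E}$, and $k_3$ recovered via the suspension isomorphism with the signs cancelling in the left square. The only cosmetic imprecision is calling the bottom row ``the suspension of the extension triangle'' when it is really its double rotation (which suspends three of the four entries and introduces the signs $-KK^G(Sl_\mathcal{E})$, $-KK^G(S\pi_\mathcal{E})$), but your bookkeeping of slots, signs, and desuspension matches the intended proof.
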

We write
\[\K\otimes\K_G\otimes\mathcal{O}_\infty^s=\K\otimes\K_G\otimes(\K\otimes\mathcal{O}_\infty)=\K(H_1\otimes (l^2(\mathbb{N})\otimes l^2(G))\otimes H_2)\otimes\mathcal{O}_\infty\]
where the $G$-action $(\id\otimes\rho\otimes(\id\otimes\lambda))_g$ is identified with $\left(\ad(1_{H_1}\otimes (1\otimes\rho_g)\otimes 1_{H_2})\right)\otimes\lambda_g$.
For the Hilbert space
\[\mathcal{H}:=(H_1\otimes (l^2(\mathbb{N})\otimes l^2(G))\otimes H_2)\oplus \C,\]
and the unitary
\[\mathcal{U}_g:=(1_{H_1}\otimes (1\otimes\rho_g)\otimes 1_{H_2})\oplus 1_\C\in \mathbb{B}(\mathcal{H}),\]
consider the full corner embeddings
\[i_1 : (\K\otimes\K_G\otimes\mathcal{O}_\infty^s, \id\otimes\rho\otimes(\id\otimes\lambda))\hookrightarrow(\K(\mathcal{H})\otimes\mathcal{O}_\infty, \ad\mathcal{U}\otimes\lambda),\]
\[i_2 : (\mathcal{E}, \id\otimes\rho\otimes(\id\otimes\lambda))\hookrightarrow(\mathcal{E}+(\K(\mathcal{H})\otimes\mathcal{O}_\infty), \ad\mathcal{U}\otimes\lambda).\]
By Lemma \ref{mori},
$KK^G(i_1), KK^G(i_2)$ are $KK^G$-equivalences and 
\[\K(\mathcal{H})\otimes\mathcal{O}_\infty\to (\mathcal{E}+(\K(\mathcal{H})\otimes\mathcal{O}_\infty))\to P_\infty\otimes \Phi(\K_G\otimes C)\]
is an essential extension with a $G$-invariant projection
$e_0\otimes 1_{\mathcal{O}_\infty}$ where 
\[e_0 : \mathcal{H}=(H_1\otimes(l^2(\mathbb{N})\otimes l^2(G))\otimes H_2)\oplus\C\to 0\oplus \C\] is a minimal projection of $\K(\mathcal{H})$ satisfying $\mathcal{U}_g e_0=e_0=e_0\mathcal{U}_g^*$.

Now we obtain the following.
\begin{cor}\label{fext}
    There exists an essential extension
    \[\K(\mathcal{H})\otimes\mathcal{O}_\infty\triangleleft\mathcal{E}+(\K(\mathcal{H})\otimes\mathcal{O}_\infty)=:\mathcal{E}'\]
    satisfying
    \begin{enumerate}
        \item $(\mathcal{E}', (\ad \mathcal{U})\otimes\lambda)$ is a non-unital separable nuclear $G$-algebra with an approximate unit consisting of an increasing sequence of projections,
        \item $((\ad \mathcal{U}_g)\otimes\lambda_g)(e_0\otimes 1_{\mathcal{O}_\infty})=e_0\otimes 1_{\mathcal{O}_\infty}\in \mathcal{E}'$,
        \item There exist $KK^G$-equivalences
        \[k_4:=k_2\hat{\otimes}KK^G(i_1)\in KK^G((\C, \id), (\K(\mathcal{H})\otimes\mathcal{O}_\infty, (\ad\mathcal{U})\otimes\lambda))^{-1},\]
        \[k_5:=k_3\hat{\otimes} KK^G(i_2)\in KK^G((A, \alpha), (\mathcal{E}', (\ad\mathcal{U})\otimes\lambda))^{-1}\]
        making the following diagram commute
        \[\xymatrix{
        (\C, \id)\ar[r]^{KK^G(1_A)}\ar[d]^{k_4}&(A, \alpha)\ar[d]^{k_5}\\
        (\K(\mathcal{H})\otimes\mathcal{O}_\infty, (\ad\mathcal{U})\otimes\lambda)\ar[r]&(\mathcal{E}', (\ad\mathcal{U})\otimes\lambda).
        }\]
        \item $F(k_4)=KK(\C(e_0\otimes 1_{\mathcal{O}_\infty})\hookrightarrow \K(\mathcal{H})\otimes\mathcal{O}_\infty)=KK(i_{e_0\otimes 1_{\mathcal{O}_\infty}})$.
        \item $\mathcal{E}'$ has a non-degenerate faithful $G$-equivariant representation
        \[(\mathcal{E}', (\ad\mathcal{U})\otimes\lambda)\subset (\mathbb{B}(\mathcal{H}\otimes\mathcal{F}(l^2(G))), \ad (\mathcal{U}\otimes\mathcal{F}(\lambda))),\]
        where $\mathcal{F}(\lambda_g)$ is the implementing unitary on the Fock space $\mathcal{F}(l^2(G))=\C\Omega_{l^2(G)}\oplus\bigoplus_{k=1}^\infty l^2(G)^{\otimes k}$ of the quasi-free automorphism $\lambda_g\in\operatorname{Aut}(\mathcal{O}_\infty)$ (i.e., $(\mathcal{O}_\infty, \lambda)\subset (\mathbb{B}(\mathcal{F}(l^2(G))), \ad\mathcal{F}(\lambda))$).
    \end{enumerate}
\end{cor}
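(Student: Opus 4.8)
The plan is to read each of the five items off the construction of $\mathcal{E}'=\mathcal{E}+(\K(\mathcal{H})\otimes\mathcal{O}_\infty)$ together with the two corner embeddings $i_1,i_2$, feeding in $k_2,k_3$ from the previous corollary. Item (2) is immediate: $\mathcal{U}_ge_0=e_0=e_0\mathcal{U}_g^*$ and $\lambda_g(1_{\mathcal{O}_\infty})=1_{\mathcal{O}_\infty}$ give $((\ad\mathcal{U}_g)\otimes\lambda_g)(e_0\otimes1_{\mathcal{O}_\infty})=\mathcal{U}_ge_0\mathcal{U}_g^*\otimes1_{\mathcal{O}_\infty}=e_0\otimes1_{\mathcal{O}_\infty}$, which lies in $\K(\mathcal{H})\otimes\mathcal{O}_\infty\subset\mathcal{E}'$. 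For (3), $k_2,k_3$ are $KK^G$-equivalences by the preceding corollary and $KK^G(i_1),KK^G(i_2)$ are $KK^G$-equivalences by Lemma \ref{mori} (both corners being full), so $k_4=k_2\hat{\otimes}KK^G(i_1)$ and $k_5=k_3\hat{\otimes}KK^G(i_2)$ are again $KK^G$-equivalences; commutativity of the new square follows by pasting the square of the previous corollary onto the $*$-homomorphism identity $i_2\circ l_{\mathcal{E}}=l_{\mathcal{E}'}\circ i_1$, which records that $\K\otimes\K_G\otimes\mathcal{O}_\infty^s$ sits inside both $\mathcal{E}$ and $\K(\mathcal{H})\otimes\mathcal{O}_\infty$ as the common $(1-e_0)$-corner.

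For (1), separability is inherited from the separable building blocks, and nuclearity follows from realizing $\mathcal{E}'$ as an extension of the nuclear quotient $P_\infty\otimes\Phi(\K_G\otimes C)$ by the nuclear ideal $\K(\mathcal{H})\otimes\mathcal{O}_\infty$; the increasing projection approximate unit is produced exactly as in Lemma \ref{appup}, now with the compact ideal $\K\otimes\K_G\otimes\mathcal{O}_\infty^s$ replaced by its full corner enlargement $\K(\mathcal{H})\otimes\mathcal{O}_\infty$, and non-unitality is forced by $\|p_n-p_{n+1}\|=1$ as in Lemma \ref{kir}. Item (5) is the Fock representation: using $(\mathcal{O}_\infty,\lambda)\subset(\mathbb{B}(\mathcal{F}(l^2(G))),\ad\mathcal{F}(\lambda))$ and $\K(\mathcal{H})\subset\mathbb{B}(\mathcal{H})$, the ideal $\K(\mathcal{H})\otimes\mathcal{O}_\infty$ acts faithfully and non-degenerately on $\mathcal{H}\otimes\mathcal{F}(l^2(G))$ with the $G$-action implemented by $\mathcal{U}\otimes\mathcal{F}(\lambda)$; since $\mathcal{E}'$ is an essential extension of this ideal it embeds into its multiplier algebra and therefore acts faithfully, non-degenerately and $G$-equivariantly on the same space.

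The real work, and the step I expect to be the main obstacle, is (4). I would use multiplicativity of the forgetful functor to write $F(k_4)=F(k_2)\hat{\otimes}KK(i_1)$, and compute $F$ on the factors of $k_2=KK^G(1_{\mathcal{O}_\infty})\hat{\otimes}KK^G(i_{e\otimes e})\hat{\otimes}\mu_{\C}$: one has $F(KK^G(1_{\mathcal{O}_\infty}))=KK(1_{\mathcal{O}_\infty})$, $F(KK^G(i_{e\otimes e}))=KK(i_{e\otimes e})$, and $F(\mu_{\C})=KK(i_e)$, the Morita class of a minimal projection. Unwinding these shows that $F(k_2)$ is the class of the $*$-homomorphism $\C\to\K\otimes\K_G\otimes\mathcal{O}_\infty^s$ carrying $1$ to $p\otimes1_{\mathcal{O}_\infty}$ for some rank-one projection $p$ of $\K(H_1\otimes(l^2(\mathbb{N})\otimes l^2(G))\otimes H_2)$; composing with the full corner $i_1$ places $p\otimes1_{\mathcal{O}_\infty}$ inside $\K(\mathcal{H})\otimes\mathcal{O}_\infty$. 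Since $p$ and $e_0$ are both rank-one projections in $\K(\mathcal{H})$, they are Murray--von Neumann equivalent, so the corner embeddings $i_{p\otimes1_{\mathcal{O}_\infty}}$ and $i_{e_0\otimes1_{\mathcal{O}_\infty}}$ define the same element of $KK(\C,\K(\mathcal{H})\otimes\mathcal{O}_\infty)$, yielding $F(k_4)=KK(i_{e_0\otimes1_{\mathcal{O}_\infty}})$. The delicate point is keeping track of how $F$ converts each equivariant Morita factor into the corresponding non-equivariant corner embedding and confirming that the accumulated image is precisely a rank-one projection tensored with $1_{\mathcal{O}_\infty}$, so that the final Murray--von Neumann comparison with $e_0\otimes1_{\mathcal{O}_\infty}$ applies.
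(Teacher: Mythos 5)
Correct, and this matches how the paper itself handles the statement: Corollary \ref{fext} is given no separate proof precisely because, as in your proposal, every item is read off the preceding construction — fullness of the corners $i_1,i_2$ together with Lemma \ref{mori}, pasting the previous corollary's square along the $*$-homomorphism identity $i_2\circ l_{\mathcal{E}}=l_{\mathcal{E}'}\circ i_1$, the $K_0$-identification of $F(k_4)$ with the class of the rank-one projection $e_0\otimes 1_{\mathcal{O}_\infty}$, and the Fock-space representation for item (5). One small caution on item (1): Lemma \ref{appup} does not transfer verbatim, since $\mathcal{T}_0\otimes\Phi(\K_G\otimes C)$ is non-unital and the $\K_G$-factorization trick used there is unavailable for $\mathcal{E}'$ because of the added $e_0$-corner, but its pattern does carry over — take $p_n:=(e_0\otimes 1_{\mathcal{O}_\infty})+q_n$ with $\{q_n\}$ the approximate unit of $\mathcal{E}$ furnished by Lemma \ref{appup}, using that $(e_0\otimes 1_{\mathcal{O}_\infty})\mathcal{E}'(e_0\otimes 1_{\mathcal{O}_\infty})=e_0\otimes\mathcal{O}_\infty$ is unital, that $((1-e_0)\otimes 1)\mathcal{E}'((1-e_0)\otimes 1)=\mathcal{E}$, and that the off-diagonal corners of $\mathcal{E}'$ have their squares in these diagonal corners.
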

\subsection{Pimsner construction for Theorem \ref{fee}}
We apply A. Kumujian's construction for $\mathcal{E}'\subset\mathbb{B}(\mathcal{H}\otimes\mathcal{F}(l^2(G)))$ in Corollary \ref{fext}.
Consider Hilbert $\mathcal{E}'-\mathcal{E}'$-bimodule
\[\mathcal{X}:=(\mathcal{H}\otimes \mathcal{F}(l^2(G)))\otimes_\C l^2(G)\otimes_\C \mathcal{E}'\]
with a $G$-action
\[g\cdot (\zeta\otimes\delta_h\otimes x):=(\mathcal{U}_g\otimes\mathcal{F}(\lambda_g))(\zeta)\otimes\delta_{gh}\otimes g\cdot x,\quad g\cdot x:=\ad (\mathcal{U}_g\otimes \mathcal{F}(\lambda_g))(x),\quad \zeta\in \mathcal{H}\otimes\mathcal{F}(l^2(G)),\; x\in\mathcal{E}'.\]
We obtain Toeplitz--Pimsner algebra $\mathcal{T}_{\mathcal{X}}$ with the quasi-free action
\[\Gamma_g(T_{\zeta\otimes\delta_h\otimes x}):=T_{g\cdot(\zeta\otimes\delta_h\otimes x)}.\]
By Lemma \ref{appup} and the proof of Lemma \ref{kir},
$\mathcal{T}_{\mathcal{X}}$ is a stable Kirchberg algebra with the following commutative diagram in $KK^G$
\[\xymatrix{
(\K(\mathcal{H})\otimes\mathcal{O}_\infty, (\ad\mathcal{U})\otimes\lambda)\ar[r]&(\mathcal{E}', (\ad\mathcal{U})\otimes\lambda)\ar[d]\\
(\C(e_0\otimes 1_{\mathcal{O}_\infty}), \id)\ar[u]^{KK^G(i_{e_0\otimes 1_{\mathcal{O}_\infty}})}\ar@{=}[d]\ar[r]&(\mathcal{T}_{\mathcal{X}}, \Gamma)\\
(\C(e_0\otimes 1), \id)\ar[r]&((e_0\otimes 1)(\mathcal{T}_{\mathcal{X}})(e_0\otimes 1), \Gamma |)\ar[u]^{KK^G(i_{e_0\otimes 1_{\mathcal{O}_\infty}})}
}\]
where $(\mathcal{E}', (\ad\mathcal{U})\otimes\lambda)\to(\mathcal{T}_\mathcal{X}, \Gamma)$ is the morphism induced by the natural embedding $\mathcal{E}'\hookrightarrow \mathcal{T}_\mathcal{X}$ (see Theorem \ref{wk}) and every vertical arrow is $KK^G$-equivalence.
Combining the above diagram with Theorem \ref{KP} and Corollary \ref{fext},
we obtain an isomorphism $\theta : A\to (e_0\otimes 1)(\mathcal{T}_\mathcal{X})(e_0\otimes 1)$ satisfying 
\[KK(\theta)=F(k_5)\hat{\otimes}KK(\mathcal{E}'\hookrightarrow \mathcal{T}_\mathcal{X})\hat{\otimes}KK(i_{e_0\otimes 1_{\mathcal{O}_\infty}})^{-1}.\]
We define an action $\gamma : G\curvearrowright A$ by
$\gamma_g :=\theta^{-1}\circ\Gamma |_g\circ \theta\in\operatorname{Aut}(A), \; g\in G$.


Now we obtain $KK^G$-equivalences \[k_6:=k_4\hat{\otimes} KK^G(i_{e_0\otimes 1_{\mathcal{O}_\infty}})^{-1}\in KK^G((\C, \id), (\C, \id))^{-1},\]
\[ k_7:=k_5\hat{\otimes} KK^G(\mathcal{E}'\to \mathcal{T}_\mathcal{X})\hat{\otimes}KK^G(i_{e_0\otimes 1_{\mathcal{O}_\infty}})^{-1}\hat{\otimes} KK^G(\theta^{-1})\in KK^G((A, \alpha), (A, \gamma))^{-1}\]
with the commutative diagram
\[\xymatrix{
(\C, \id)\ar[r]^{KK^G(1_A)}\ar[d]^{k_6}&(A, \alpha)\ar[d]^{k_7}\\
(\C, \id)\ar[r]^{KK^G(1_A)}&(A, \gamma).
}\]
By Corollary \ref{fext}, $F(k_6)=F(k_4)\hat{\otimes} KK(i_{e_0\otimes 1_{\mathcal{O}_\infty}})^{-1}=KK(\id_\C)$,
and one has \[KK(1_A)\hat{\otimes}F(k_7)=KK(1_A).\]
By the Pimsner construction, one has the following $G$-invariant state
\[A=(e_0\otimes 1)(\mathcal{T}_\mathcal{X})(e_0\otimes 1)\xrightarrow{\langle -\Omega_\mathcal{X}, \Omega_{\mathcal{X}}\rangle}e_0\otimes\mathcal{O}_\infty\xrightarrow{\langle-\Omega_{l^2(G)}, \Omega_{l^2(G)}\rangle}\C.\]
\begin{proof}[{Proof of Theorem \ref{fee}}]
To complete the proof, it is enough to show that $(A, \gamma)$ is ergodic and point-wise outer.

First, we show that $\gamma_g, \;g\not=e$ is outer.
Fix $\zeta_0\in \mathcal{H}, \; e_0(\zeta_0)=\zeta_0, 
 \; ||\zeta_0||=1$.
Assume that there is $V\in U((e_0\otimes 1)(\mathcal{T}_{\mathcal{X}})(e_0\otimes 1))$ with $\gamma_g=\ad V$ for some $g\in G\backslash\{e\}$.
Note that $(e_0\otimes 1)(\mathcal{T}_{\mathcal{X}})(e_0\otimes 1)$ is faithfully represented on \[(e_0\otimes 1)\mathcal{E}'\Omega_\mathcal{X}\oplus\bigoplus_{k=1}^\infty((\C\zeta_0\otimes \mathcal{F}(l^2(G)))\otimes l^2(G)\otimes\mathcal{E}')\otimes_{\mathcal{E}'}\mathcal{X}^{\otimes_{\mathcal{E}'}k-1}.\]
The same argument as in the proof of Lemma \ref{out} shows that $\beta_z(V)=V, \;\;z\in\mathbb{T}$.
There exist a finite set $F\subset G$ and finitely many elements $\mu_i, \nu_i, \xi_i, \eta_i\in \bigcup_{k=1}^\infty((\mathcal{H}\otimes\mathcal{F}(l^2(G)))\otimes l^2(F)\otimes \mathcal{E}'^{\otimes k})$, $v\in \mathcal{E}'$ satisfying
\[||V-(e_0\otimes 1)(v+\sum_i(T_{\mu_i}+T_{\nu_i}^*+T_{\xi_i}T_{\eta_i}^*))(e_0\otimes 1)||<1/8.\]
Applying $\int_\mathbb{T}\beta_z(\cdot)dz$,
we may assume
\[||V-(e_0\otimes 1)(v+\sum_iT_{\xi_i}T_{\eta_i}^*)(e_0\otimes 1)||<1/8.\]
For $\zeta_0\otimes\Omega_{l^2(G)}\in \mathcal{H}\otimes\mathcal{F}(l^2(G))$,
we have $\mathcal{U}_g\otimes\mathcal{F}(\lambda_g)(\zeta_0\otimes\Omega_{l^2(G)})=\zeta_0\otimes\Omega_{l^2(G)}$.
Since $|G|=\infty$, there is $h\in G\backslash F$,
and the direct computation yields
\begin{align*}
    (\zeta_0\otimes\Omega_{l^2(G)})\otimes\delta_{gh}\otimes (e_0\otimes 1)=&(\mathcal{U}_g\otimes\mathcal{F}(\lambda_g))(\zeta_0\otimes\Omega_{l^2(G)})\otimes\delta_{gh}\otimes g\cdot (e_0\otimes 1)\\
    =&g\cdot((\zeta_0\otimes\Omega_{l^2(G)})\otimes\delta_h\otimes (e_0\otimes 1))\\
    =&\Gamma_g(T_{(\zeta_0\otimes\Omega_{l^2(G)})\otimes \delta_h\otimes (e_0\otimes 1)})(e_0\otimes 1)\Omega_\mathcal{X}\\
    =&\gamma_g((e_0\otimes 1)T_{(\zeta_0\otimes\Omega_{l^2(G)})\otimes \delta_h\otimes(e_0\otimes 1)}(e_0\otimes 1))(e_0\otimes 1)\Omega_\mathcal{X}\\
    =&VT_{(\zeta_0\otimes\Omega_{l^2(G)})\otimes\delta_h\otimes(e_0\otimes 1)}V^*(e_0\otimes 1)\Omega_\mathcal{X}\\
    \approx_{1/8}&VT_{(\zeta_0\otimes\Omega_{l^2(G)})\otimes\delta_h\otimes (e_0\otimes 1)}(e_0\otimes 1)v^*(e_0\otimes 1)\Omega_\mathcal{X}\\
    \approx_{1/8(1+1/8)}&((e_0\otimes 1)v(\zeta_0\otimes\Omega_{l^2(G)}))\otimes \delta_h\otimes ((e_0\otimes 1)v^*(e_0\otimes 1))\\
    &+\sum_i (e_0\otimes 1)T_{\xi_i}T_{\eta_i}^*((\zeta_0\otimes \Omega_{l^2(G)})\otimes \delta_h\otimes ((e_0\otimes 1)v^*(e_0\otimes 1))).
\end{align*}
Since $h\not\in F$,
one has $\sum_i T_{\xi_i}T_{\eta_i}^*((\zeta_0\otimes \Omega_{l^2(G)})\otimes \delta_h\otimes ((e_0\otimes 1)v^*(e_0\otimes 1)))=0$ which implies
\begin{align*}
    1=&||\langle(\zeta_0\otimes \Omega_{l^2(G)})\otimes\delta_{gh}\otimes(e_0\otimes 1), (\zeta_0\otimes\Omega_{l^2(G)})\otimes\delta_{gh}\otimes (e_0\otimes 1)\rangle||\\
    \approx_{1/2}&||\langle((e_0\otimes 1)v(\zeta_0\otimes\Omega_{l^2(G)}))\otimes \delta_h\otimes ((e_0\otimes 1)v^*(e_0\otimes 1)), (\zeta_0\otimes\Omega_{l^2(G)})\otimes\delta_{gh}\otimes (e_0\otimes 1)\rangle||=0.
\end{align*}
This is a contradiction and $\gamma_g$ must be outer.

Next, we show $A^\gamma=\C$.
Fix $Y\in ((e_0\otimes 1)(\mathcal{T}_{\mathcal{X}})(e_0\otimes 1))^\gamma$ and arbitrary $\epsilon>0$.
There exist a finite set $F_1'\subset G$ and finitely many elements $\mu_i, \nu_i, \xi_i, \eta_i\in \bigcup_{k=1}^\infty ((\mathcal{H}\otimes \mathcal{F}(l^2(G))\otimes l^2(F_1')\otimes\mathcal{E}')^{\otimes k}$, $Y'\in \mathcal{E}'$ satisfying
\[||Y-(e_0\otimes 1)(Y'+\sum_i(T_{\mu_i}+T_{\nu_i}^*+T_{\xi_i}T_{\eta_i}^*))(e_0\otimes 1)||<\epsilon.\]
Recall that the creation operators on $\mathcal{F}(l^2(G))$ give the generating isometries $\{S_g\}_{g\in G}$ of $\mathcal{O}_\infty$ with mutually orthogonal ranges.
Since $(e_0\otimes 1)Y'(e_0\otimes 1)\in \C e_0\otimes\mathcal{O}_\infty$,
there exist a finite set $F_1\supset F_1'$ and finitely many elements $\mu_j', \nu_j', \xi_j', \eta_j'\in\bigcup_{k=1}^\infty l^2(F_1)^{\otimes k}$, $y\in \C$
satisfying
\[||(e_0\otimes 1)Y'(e_0\otimes 1)-e_0\otimes (y+\sum_j(S_{\mu_j'}+S_{\nu_j'}^*+S_{\xi_j'}S_{\eta_j'}^*))||<\epsilon.\]

Consider a faithful representation (see \cite[p. 206]{P})
\[\pi : \mathcal{T}_{\mathcal{X}}\ni T_\zeta\mapsto T_\zeta |\in \mathcal{L}_{\mathcal{E}'}(\bigoplus_{k=1}^\infty\mathcal{X}^{\otimes_{\mathcal{E}'}k}).\]
Then, there exist a finite set $F_2\subset G$ and
\[u, v\in \bigoplus_{k=1}^\infty ((\mathcal{H}\otimes\mathcal{F}(l^2(F_2)))\otimes l^2(F_2)\otimes\mathcal{E}')^{\otimes_{\mathcal{E}'} k},\quad ||u||, ||v||\leq 1,\]
\[||Y-e_0\otimes y||\approx_\epsilon ||\langle \pi(Y-e_0\otimes y)u, v\rangle||.\]
Since $|G|=\infty$, there exists $g\in G$ with $g\cdot F_1\cap F_2=\emptyset$,
and one has
\[\pi(\gamma_g(e_0\otimes S_{\mu_j'}^*))v=\pi(\gamma_g(e_0\otimes S_{\nu_j'}^*))u=\pi(\gamma_g(e_0\otimes S_{\eta_j'}))u=0,\]
\[\pi(\Gamma_g(T_{\mu_i}^*)(e_0\otimes 1))v=\pi(\Gamma_g(T_{\nu_i}^*)(e_0\otimes 1))u=\pi (\Gamma_g(T_{\eta_i}^*)(e_0\otimes 1))u=0.\]
The direct computation yields
\begin{align*}
    &||Y-e_0\otimes y||\\
    \approx_\epsilon&||\langle \pi(Y-e_0\otimes y)u, v\rangle||\\
    =&||\langle \pi(\gamma_g(Y-e_0\otimes y))u, v\rangle||\\
    \approx_{2\epsilon}&||\langle \pi(\Gamma_g\left(e_0\otimes \left(\sum_j S_{\mu_j'}+S_{\nu_j'}^*+S_{\xi_j'}S_{\eta_j'}^*\right)+(e_0\otimes 1)\left(\sum_iT_{\mu_i}+T_{\nu_i}^*+T_{\xi_i}T_{\eta_i}^*\right)(e_0\otimes 1))\right))u, v\rangle||\\
    =&0.
\end{align*}
By the above argument, there exists $y_\epsilon\in\C$ with $||Y-e_0\otimes y_\epsilon||<3\epsilon$ for any $\epsilon>0$.
Thus, we can conclude $Y\in\C(e_0\otimes 1_{\mathcal{O}_\infty})=\C 1_A$ (i.e., $A^\gamma=\C1_A$).
\end{proof}

\end{document}